\newtheorem{theorem}{Theorem}[section]
\newtheorem{proposition}[theorem]{Proposition}
\newtheorem{lemma}[theorem]{Lemma}
\newtheorem{corollary}[theorem]{Corollary}
\theoremstyle{definition}
\newtheorem{definition}[theorem]{Definition}
\newtheorem{remark}[theorem]{Remark}
\newtheorem{example}[theorem]{Example}
\newcommand{\Z}{{\mathbb Z}}
\newcommand{\N}{{\mathbb N}}
\newcommand\B{{\mathcal B}}
\renewcommand\L{{\mathcal L}}
\renewcommand{\p@enumii}{}
\numberwithin{equation}{section}
\DeclareMathOperator{\Orb}{Orb}
\DeclareMathOperator{\Sp}{Sp}
\numberwithin{equation}{section}
\date{}
\begin{document}

\title[Substitutions of some primitive components]{Invariant 
measures for subshifts arising from substitutions of some primitive components}
\author[M. Hama]{M. Hama}
\address{Department of English Language and Literature, 
Bunkyo Gakuin College,1-19-1 Mukougaoka, Bunkyo-ku, Tokyo 113-8668, JAPAN.}
\email{mhama@ell.u-bunkyo.ac.jp}

\author[H. Yuasa]{H. Yuasa}
\address{17-23-203 Idanakano-cho, Nakahara-ku, Kawasaki Kanagawa 211-0034, JAPAN.}
\email{hisatoshi\_yuasa@ybb.ne.jp}
\urladdr{http://www.geocities.jp/hjrkk606/}

\keywords{non-primitive substitution, subshift, invariant measure, ergodicity}

\subjclass[2000]{ Primary 37B10. 68R15. 37A05; Secondary}

\maketitle

\begin{abstract}
The class of substitutions of some primitive components is introduced. 
A bilateral subshift arising from a substitution of some primitive components 
is decomposed into pairwise disjoint, locally compact, 
shift\nobreakdash-invariant sets, on each 
of which an invariant Radon measure is unique up to scaling. It is 
completely characterized in terms of eigenvalues of an incidence matrix when 
the unique invariant measure is finite.
\end{abstract}

\section{Introduction}

It was shown by \cite{Fo,DHS} that any ``aperiodic'', stationary, properly 
ordered Bratteli diagram gives rise to a Bratteli\nobreakdash-Vershik 
system conjugate to a bilateral subshift arising from an aperiodic, primitive 
substitution, and vice versa. In \cite{BKM}, this correspondence was extended by 
successfully removing the hypothesis of both simplicity for Bratteli 
diagrams and primitivity for substitutions. Actually, they showed that if $B$ is 
a stationary, ordered Bratteli diagram which admits an 
aperiodic Vershik map $\lambda_B$ acting on a perfect space $X_B$, then the 
Bratteli\nobreakdash-Vershik system $(X_B,\lambda_B)$ is conjugate to a 
subshift arising from an aperiodic substitution if and only if no restriction 
of $\lambda_B$ to a minimal set is conjugate to an odometer; conversely, given 
an aperiodic substitution with nesting property, there exists a stationary, 
ordered Bratteli diagram yielding a Bratteli\nobreakdash-Vershik system 
conjugate to a subshift arising from the substitution. In contrast to 
\cite{Fo,DHS}, it is remarkable that ordered Bratteli diagrams with more than 
one minimal or maximal paths play central roles in the 
above\nobreakdash-mentioned correspondence \cite{BKM}; see also \cite{M}. 

On the other hand, aiming at a similar result in the class of almost 
simple, ordered Bratteli diagrams in the sense of \cite{Danilenko}, the 
second author \cite{Y} introduced the notion of almost primitivity for 
substitutions, and showed that an almost primitive substitution generates an 
almost minimal subshift, in the sense of \cite{Danilenko}, with a unique (up to 
scaling), nonatomic, invariant Radon measure. Before this work, as pointed out 
in \cite{BKMS}, a concrete almost primitive substitution was studied in 
\cite{Fisher}, which is the so\nobreakdash-called Cantor substitution. By 
\cite{HPS,Danilenko}, any 
almost minimal system is conjugate to the Vershik map arising from an almost 
simple, ordered Bratteli diagram. It is still an open question to 
characterize a class of almost simple, ordered Bratteli diagrams whose 
Vershik maps conjugate to subshifts arising from almost primitive substitutions. 
Actually, this question forces us to be in a quite different situation from 
\cite{Fo,DHS,BKM}: there exists a class of {\em non\nobreakdash-stationary}, 
almost simple, ordered Bratteli diagrams whose Vershik maps are conjugate to 
subshifts arising from almost primitive substitutions; see for details 
\cite[Remark~5.5]{Y}.

Applying the correspondence \cite{BKM} mentioned above, and hence via 
stationary, ordered Bratteli diagrams, invariant measures for subshifts arising 
from aperiodic substitutions were studied in \cite{BKMS}. Roughly speaking, 
one of their results showed the existence of a one\nobreakdash-to\nobreakdash-one 
correspondence between the set of ergodic, 
probability (resp.\ nonatomic, infinite) measures for the subshift $X_\sigma$ 
arising from a given aperiodic substitution $\sigma$ and the set of 
``distinguished'' eigenvectors 
(resp.\ non\nobreakdash-distinguished eigenvalues) of the incidence matrix 
matrix $M_\sigma$ of $\sigma$. The goal of this paper is to refine 
this correspondence in the class of substitutions of {\em some 
primitive components} (Definition~\ref{n_components}). This class is so 
large that it includes all the primitive or almost primitive ones, and the 
so\nobreakdash-called Chacon substitution as well. 
Some properties required for a substitution to be of some primitive 
components are stronger, but the other is weaker, than those studied in 
\cite{BKMS}. We actually show that a bilateral subshift $X_\sigma$ arising from 
a given substitution $\sigma$ of some primitive components is decomposed into 
finite number of pairwise disjoint, locally compact, shift\nobreakdash-invariant 
sets $X_i$, $1 \le i \le N$, so that an invariant Radon measure on each 
$X_i$ is unique up to scaling, and moreover, the orbit of any point in each 
$X_i$ is dense in $X_i$. We also obtain the same criterion in terms of 
eigenvalues of $M_\sigma$ as \cite{BKMS} to determine when the unique 
invariant measure is finite. 

All the way to the end, we do not exploit Bratteli diagrams but tools within the 
framework of subshifts. This standing position is quite different from 
\cite{BKMS}. A characterization 
(Lemma~\ref{unique_Radon}) when a locally compact minimal subshift over a finite 
alphabet has a unique (up to scaling) invariant Radon measure will help us prove 
Theorem~\ref{uniqueness}. In Section~\ref{PFTheory}, auxiliary substitutions 
developed by \cite{Qu} play central roles when we estimate how fast the 
number of the occurrences of a letter in a $k$\nobreakdash-word of a given 
substitution of some primitive components increases as $k$ tends to infinity. 
The auxiliary substitutions also make it possible to 
calculate measures of cylinder sets with respect to invariant measures 
(Example~\ref{measures_of_cylinders}). 

\section{Substitutions in question}
We basically follow notation and terminology adopted in \cite{DHS} concerning 
combinatorics on words. 
Let $A$ be a finite alphabet with $\sharp A \ge 2$. Let 
$A^+$ denote the set of nonempty words over $A$. Set $A^\ast=A^+ \cup 
\{\Lambda\}$, where $\Lambda$ is the empty word. We say that 
$u \in A^+$ {\em occur} in $v \in A^+$, or $u$ is a {\em factor} of 
$v$, if there exists an integer $i$ with $1 \le i \le |v|$ such that 
$v_{[i,i+|u|)}:=v_iv_{i+1}\dots v_{i+|u|-1}=u$, where $|v|$ denotes the 
length of $v$ and $v_n$ is the $n$\nobreakdash-th letter of $v$. 
We refer to $i$ as an {\em occurrence} of $u$ in $v$. 
Given $u, v \in A^+$, we denote by $N(u,v)$ the number of the occurrences of 
$u$ in $v$. 

A {\em substitution} $\sigma$ on $A$ is a map from $A$ to $A^+$. By 
concatenations of words, we may define powers 
$\sigma^k:A \to A^+$ of $\sigma$ for $k \in \N$, and may enlarge the domain of 
the powers to $A^+$ or $A^\Z$. A subshift 
\[
X_\sigma=\{x=(x_i)_{i \in \Z} \in A^\Z; x_{[-i,i]}:=x_{-i}x_{-i+1} \dots x_i \in 
\L(\sigma) \textrm{ for every } i \in \N\}
\]
is called a {\em substitution dynamical system}, where 
\[
\L(\sigma)=\bigcup_{n \in \N, a \in A}\{w \in A^\ast; w 
\textrm{ is a factor of } \sigma^n(a)\}.
\] 
A word of the form $\sigma^n(a)$ is called an {\em $n$\nobreakdash-word}. 
We set $\L_n(\sigma)=\{w \in \L(\sigma);|w|=n\}$ for $n \in \N$. 
We denote by $T_\sigma$ the left shift on $X_\sigma$, and let 
$\Orb_{T_\sigma}(x)=\{{T_\sigma}^nx;n \in \Z\}$ for $x \in X_\sigma$. 
Given an infinite sequence $x$ over $A$, we set 
$\L(x)=\{w \in A^\ast;w \textrm{ is a factor of } x\}$, 
and $\L(X)=\bigcup_{x \in X}\L(x)$ if $X \subset A^\Z$. 
Given $u,v \in A^\ast$, we denote by $[u.v]$ a {\em cylinder set } 
$\{x \in X_\sigma;x_{[-|u|,|v|)}=uv\}$. 
If $u=\Lambda$, then we use the notation $[v]$ in stead of $[\Lambda.v]$. 
Given $x \in X_\sigma$, let $\delta_x$ denote the point mass concentrated on $x$. 
A positive measure on a locally compact metric space is called 
a {\em Radon measure} if it is finite on any compact set.

The {\em incidence matrix} $M_\sigma$ of $\sigma$ is an $A \times A$ matrix 
whose $(a,b)$\nobreakdash-entry is $N(b, \sigma(a))$. 
Putting a linear order on $A$, say $a_1<a_2<\dots<a_n$, we also write 
$(M_\sigma)_{i,j}$ to indicate the$(a_i,a_j)$\nobreakdash-entry 
$(M_\sigma)_{a_i,a_j}$. 
Let us recall some basic facts concerning square matrices. Let $M$ be a 
nonnegative square matrix. The matrix $M$ is said to be {\em primitive} if 
there exists $k \in \N$ such that $M^k>0$, i.e.\ every entry of 
$M^k$ is positive.
We denote by $\Sp(M)$ the set of eigenvalues of $M$. If $\lambda \in \Sp(M)$ is 
such that $|\eta| < \lambda$ for any other $\eta \in \Sp(M)$, then we call 
$\lambda$ a {\em dominant eigenvalue} of $M$. In this case,
\[
\min_i \sum_j M_{i,j} \le \lambda \le \max_i \sum_j M_{i,j}.
\]
Perron\nobreakdash-Frobenius 
Theory guarantees that any primitive matrix $M$ has a simple, dominant eigenvalue 
$\lambda$ which admits a positive eigenvector. Then, letting $\alpha$ and 
$\beta$ be positive, right and left eigenvectors of $M$ corresponding to 
$\lambda$, respectively, with $\beta \alpha=1$, it follows that 
$\lim_{k \to \infty}\lambda^{-k}(M^k)_{ij}=\alpha_i \beta_j$ for all possible 
$i,j$. See for details \cite{LM}. 

\begin{definition}\label{n_components}
A substitution $\sigma:A \to A^+$ is said to be {\em of some 
primitive components} if 
there is a sequence $\emptyset \ne A_1 \subsetneq A_2 \subsetneq 
\dots \subsetneq A_{n-1} \subsetneq A_n=A$ such that 
\begin{enumerate}[{\rm (i)}]
\item\label{closed}
for every integer $i$ with $1 \le i \le n$, it holds that 
$\sigma(a) \in {A_i}^+$ if $a \in A_i$;
\item
there exists $k \in \N$ such that for any integer $i$ with $1 \le i \le n$, 
any $a \in A_i \setminus A_{i-1}$ and any $b \in A_i$, the letter $b$ 
occurs in $\sigma^k(a)$, 
\end{enumerate}
where $A_0=\emptyset$. We also say that the substitution $\sigma$ is {\em of $n$ 
primitive components}. We call $n$ the {\em number of primitive 
components} of $\sigma$, and denote it by $n_\sigma$. 
\end{definition}

If a given substitution $\sigma$ is of some primitive components, then 
$M_\sigma$ is written in a form:
\begin{equation}\label{Q_i}
M_\sigma=
\begin{bmatrix}
Q_1 & 0 & 0 & \cdots & 0 \\
R_{2,1} & Q_2 & 0 & \cdots & 0 \\
R_{3,1} & R_{3,2} & Q_3 & \cdots & 0 \\
\vdots & \vdots & \vdots & \ddots & \vdots \\
R_{n_\sigma,1} & R_{n_\sigma,2} & R_{n_\sigma,3} & \cdots & Q_{n_\sigma}
\end{bmatrix}
\end{equation}
so that all the entries on or below diagonal of some power of ${M_\sigma}$ are 
positive. Conversely, this property implies that a given substitution is of 
some primitive components.

In the case $n_\sigma=1$, a substitution $\sigma$ becomes primitive. Then the 
subshift $X_\sigma$ is minimal and uniquely ergodic; see for example 
\cite{BFMS,Qu}. Throughout this paper, we assume 
$n_\sigma \ge 2$. Any almost primitive substitution is of two primitive 
components; see for details \cite{Y}. 

Definition~\ref{n_components} allows us to define a substitution 
$\sigma_i:A_i \to {A_i}^+$, $1 \le i \le n_\sigma$, by 
$\sigma_i(a)=\sigma(a)$ for $a \in A_i$. The substitution $\sigma_i$ is of 
$i$ primitive components. Since 
\[
\L(\sigma_1) \subset \L(\sigma_2) \subset \dots \subset \L(\sigma_{n_\sigma-1}) 
\subset \L(\sigma_{n_\sigma})=\L(\sigma),
\]
we have 
\[
X_{\sigma_1} \subset X_{\sigma_2} \subset \dots \subset X_{\sigma_{n_\sigma-1}} 
\subset X_{\sigma_{n_\sigma}}=X_{\sigma},
\]
which are all $T_\sigma$\nobreakdash-invariant closed sets. All of 
$X_{\sigma_2},X_{\sigma_3},\dots,X_\sigma$ are always 
nonempty. It holds that $X_{\sigma_1}=\emptyset$ if and only if $A_1$ is a 
singleton and $\sigma(s)=s$, where $A_1=\{s\}$. 

The class ${\mathcal S}$ of substitutions of some primitive components is 
different from the class ${\mathcal T}$ of substitutions studied in \cite{BKMS}. 
A substitution $\sigma:A \to A^+$ belongs to ${\mathcal T}$ if and only if 
the following conditions are satisfied:
\begin{enumerate}[{\rm (1)}]
\item\label{longer_and_longer}
$\lim_{n \to \infty}|\sigma^n(a)|=\infty$ for any $a \in A$;
\item\label{aperiodic}
$\sigma$ is aperiodic, that is, $X_\sigma$ has no periodic points of $T_\sigma$;
\item\label{admitted_form}
$M_\sigma$ is written in a form$:$ 
\begin{equation}\label{below_triangle}
M_\sigma=
\begin{bmatrix}
Q_1 & 0 & \cdots & 0 & 0 & \cdots & 0 \\
0 & Q_2 & \cdots & 0 & 0 & \cdots & 0 \\
\vdots & \vdots & \ddots & \vdots & \vdots &  & \vdots \\
0 & 0 & \cdots & Q_s & 0 & \cdots & 0 \\
R_{s+1,1} & R_{s+1,2} & \cdots & R_{s+1,s} & Q_{s+1} & \cdots & 0 \\
\vdots & \vdots &  & \vdots & \vdots & \ddots & \vdots \\
R_{m,1} & R_{m,2} & \cdots & R_{m,s} & R_{m,s+1} & \cdots & Q_m 
\end{bmatrix}
\end{equation}
so that 
\begin{enumerate}[{\rm (a)}]
\item
for every integer $i$ with $1 \le i \le m$, $Q_i$ is a primitive matrix if 
it is nonzero$;$
\item
for every integer $i$ with $s < i \le m$, there exists an integer $j$ with 
$1 \le j < i$ such that $R_{i,j} \ne 0$.
\end{enumerate}
\end{enumerate}
Notice that no inclusion relations hold between ${\mathcal S}$ and ${\mathcal T}$. 
Neither \eqref{longer_and_longer} nor \eqref{aperiodic} is not required for a 
substitution to be of some primitive components. However, the irreducible 
properties of incidence matrices required in \eqref{admitted_form} are not as 
rigid as those required for substitutions of some primitive components.

The following is a key lemma to investigate recurrence property and 
invariant sets for $X_\sigma$.
\begin{lemma}\label{basic_lemma}
Given an integer $i$ with $1 < i \le n_\sigma$, there exist $a \in A_{i-1}$, 
$b \in A_i \setminus A_{i-1}$, $k \in \N$, $u \in {A_{i-1}}^\ast$ and 
$v \in {A_i}^\ast$ such that at least one of the following holds$:$
\begin{enumerate}[{\rm (i)}]
\item\label{first_case}
$ab \in {\mathcal L}(\sigma_i)$ and $\sigma^k(ab)=uabv;$
\item\label{second_case}
$ba \in {\mathcal L}(\sigma_i)$ and $\sigma^k(ba)=vbau$.
\end{enumerate}
\end{lemma}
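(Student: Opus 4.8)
The plan is to locate a \emph{connection} between the two consecutive components $A_{i-1}$ and $A_i\setminus A_{i-1}$ that is reproduced verbatim by a power of $\sigma$. First I would record that a mixed adjacency exists at all. Let $p$ be the power of Definition~\ref{n_components}\eqref{first_case}\,--\,actually the power $k$ furnished by condition (ii); fix any $b\in A_i\setminus A_{i-1}$. Then $\sigma^{p}(b)$ contains every letter of $A_i$, and since $A_{i-1}\neq\emptyset$ this word contains both a letter of $A_{i-1}$ and a letter of $A_i\setminus A_{i-1}$, hence two adjacent letters of opposite kinds. Thus $\L_2(\sigma_i)$ contains a $2$\nobreakdash-block $xy$ with exactly one of $x,y$ in $A_{i-1}$.

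Second I would reduce the lemma to producing such a block that is \emph{periodic} under the junction map. On the finite set $\L_2(\sigma_i)$ define $D(xy)$ to be the $2$\nobreakdash-block consisting of the last letter of $\sigma(x)$ followed by the first letter of $\sigma(y)$; as this pair is adjacent in $\sigma(xy)$, the map $D$ sends $\L_2(\sigma_i)$ into itself. If $x_0y_0$ is a fixed point of $D^{k}$, then $\sigma^{k}(x_0)$ ends in $x_0$ and $\sigma^{k}(y_0)$ begins with $y_0$, so $\sigma^{k}(x_0y_0)=\sigma^{k}(x_0)\sigma^{k}(y_0)$ has the shape $(\cdots x_0)(y_0\cdots)$: the block reappears exactly at the cut. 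If moreover $x_0\in A_{i-1}$ and $y_0\in A_i\setminus A_{i-1}$, then $\sigma^{k}(x_0)\in{A_{i-1}}^\ast$ supplies the required $u$ and we land in case~\eqref{first_case} with $a=x_0$, $b=y_0$; the symmetric reading gives case~\eqref{second_case}. So it suffices to find a $D$\nobreakdash-periodic block that is mixed.

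Third, and this is the heart of the matter, I would show such a mixed periodic block exists. The key structural observation is that, because $A_{i-1}$ is $\sigma$\nobreakdash-invariant (Definition~\ref{n_components}\eqref{first_case}), the last\nobreakdash-letter and first\nobreakdash-letter maps $L,F$ send $A_{i-1}$ into $A_{i-1}$; hence along any $D$\nobreakdash-orbit each coordinate can only pass from $A_i\setminus A_{i-1}$ to $A_{i-1}$, never back. Consequently on every $D$\nobreakdash-cycle each coordinate keeps a constant kind, and a mixed periodic block is exactly a new letter on an $F$\nobreakdash-cycle paired, adjacently, with an old letter on an $L$\nobreakdash-cycle, or its mirror image. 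I would organize the search by a pigeonhole on the map sending a new letter $b$ to the first letter of $A_i\setminus A_{i-1}$ occurring in $\sigma^{p}(b)$: a periodic point $b_*$ satisfies, for a suitable power $\sigma^{N}$, that the first letter of $A_i\setminus A_{i-1}$ occurring in $\sigma^{N}(b_*)$ is $b_*$ itself. If some old letter precedes this occurrence, the intervening prefix lies in ${A_{i-1}}^\ast$ and immediately yields case~\eqref{first_case}; the symmetric last\nobreakdash-occurrence pigeonhole yields case~\eqref{second_case}.

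The hard part is the boundary sub\nobreakdash-case in which the reproduced new letter sits flush at the start (resp.\ end) of its image, so that no old prefix (resp.\ suffix) is visible. Here I expect to use the full strength of Definition~\ref{n_components}(ii): since every $\sigma^{p}(b)$ with $b$ new contains all of $A_i$, the letters of $A_i\setminus A_{i-1}$ form a single strongly connected class for the relation ``occurs in the image of'', and $b_*$ occurs in $X_{\sigma_i}$ with an actual left neighbour $e$. Because $b_*$ is $F$\nobreakdash-periodic, the junction of $\sigma^{N}(e\,b_*)$ shows that $L^{N}(e)$ is again a left neighbour of $b_*$; if the $L$\nobreakdash-cycle of $e$ is old we obtain a genuine old--new connection, hence case~\eqref{first_case}, and otherwise a new $L$\nobreakdash-cycle appears, which I feed into the mirror argument to force case~\eqref{second_case}. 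Verifying that these two escape routes cannot be simultaneously blocked\,--\,equivalently, that condition~(ii) forbids a new letter from being fenced off from $A_{i-1}$ on both sides\,--\,is the one step that needs genuine care, and it is precisely where the strong mixing hypothesis, rather than the weaker irreducibility required in \cite{BKMS}, is indispensable.
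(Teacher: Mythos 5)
Your first two paragraphs and the non-flush half of the third are correct, and up to that point you are close to the paper: the paper also starts from a mixed block $a_0b_0$ and pigeonholes along the powers $\sigma^j$. But the paper pigeonholes on the \emph{pair} $(a_j,b_j)$, where $b_j$ is the first letter of $A_i\setminus A_{i-1}$ in $\sigma^j(b_0)$ and $a_j$ is its predecessor in $\sigma^j(a_0b_0)$; since $a_0\in A_{i-1}$ and $A_{i-1}$ is $\sigma$\nobreakdash-invariant, every $a_j$ is automatically old, and a repetition $(a_{j_1},b_{j_1})=(a_{j_2},b_{j_2})$ yields case~(i) at once: whether or not $\sigma^{j_2-j_1}(b_{j_1})$ begins with $b_{j_1}$ (your ``flush'' sub\nobreakdash-case), the letter preceding the first new letter of $\sigma^{j_2-j_1}(a_{j_1}b_{j_1})$ is exactly $a_{j_2}=a_{j_1}$ --- in the flush case it is the last letter of $\sigma^{j_2-j_1}(a_{j_1})$, which equals $a_{j_2}$ by the way the sequence $a_j$ is built. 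So the boundary case you isolate never needs separate treatment; it only arises because you pigeonhole on the new letter alone and try to reconstruct an old neighbour afterwards.

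That last step of yours is a genuine gap, and the claim you reduce it to is false: condition~(ii) does \emph{not} forbid a new letter from being fenced off from $A_{i-1}$ on both sides. Take $A_{i-1}=\{o\}$, $A_i\setminus A_{i-1}=\{p,q,c\}$, and $\sigma(o)=o$, $\sigma(q)=qcpq$, $\sigma(p)=qcp$, $\sigma(c)=cocpqc$. Condition~(ii) holds with $k=2$, and one computes $\L_2(\sigma_i)=\{qc,cp,pq,co,oc,cq\}$, so every neighbour of $q$ (left: $p,c$; right: $c$) is new: $q$ is fenced on both sides. Worse for your search: $q$ is flush ($F(q)=q$), its left neighbours have all\nobreakdash-new $L$\nobreakdash-cycles ($L(p)=p$, $L(c)=c$), and the only right neighbour of $p$ is $q$, whose $F$\nobreakdash-cycle is again all new; so with the choices $b_*=q$, $e=p$ both of your escape routes are blocked simultaneously and the alternating search cycles between $q$ and $p$ forever. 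The lemma of course still holds here --- case~(i) holds via $(a,b)=(o,c)$, since $oc\in\L_2(\sigma_i)$ and $\sigma(oc)=(oc)\,ocpqc$ --- but your procedure finds this only if it happens to pick $e=c$ and then inspects the right neighbours of $c$, and you give no argument that a successful choice must exist. The repair is precisely the paper's pairing trick described above: carry the old predecessor along with the new letter throughout the iteration, instead of trying to certify, after the fact, that a flush periodic letter has an old neighbour.
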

\begin{proof}
Put $r=\sharp A_i$. Find 
$a_0 \in A_{i-1}$ and $b_0 \in A_i \setminus A_{i-1}$ 
such that $a_0b_0 \in \L(\sigma_i)$ or $b_0a_0 \in \L(\sigma_i)$. It is enough to 
consider only the case $a_0b_0 \in \L(\sigma_i)$. 
Let $1 \le m_j < |\sigma^j(b_0)|$ be such that $\sigma^j(b_0)_{[1,m_j)} \in 
{A_{i-1}}^\ast$ and $b_j:=\sigma^j(b_0)_{m_j} \in A_i \setminus A_{i-1}$. 
Put $a_j = \sigma^j(a_0b_0)_{|\sigma^j(a_0)|+m_j-1}$. 
Since $a_{j_1}b_{j_1}=a_{j_2}b_{j_2}$ for some $j_2 > j_1 \ge 0$, 
\eqref{first_case} holds with $a=a_{j_1}$, $b=b_{j_1}$ and $k=j_2-j_1$. 
\end{proof}
We consider mainly the case 
where Lemma~\ref{basic_lemma}~\eqref{first_case} holds, 
because results below would be verified by means of symmetric arguments 
also for substitutions satisfying \eqref{second_case} of the lemma. 
Since $X_{\sigma^k}=X_\sigma$ for any $k \in \N$, we may assume 
Lemma~\ref{basic_lemma}~\eqref{first_case} with $k=1$.

\section{Recurrence property of $\sigma$}\label{periodic_points}

Throughout this section, we let $a,b,i,u$ and $v$ be as in 
Lemma~\ref{basic_lemma}~\eqref{first_case}. We are concerned with structure 
of $X_{\sigma_i} \setminus X_{\sigma_{i-1}}$. 
Consider the case $u = \Lambda$. Then $\sigma(a)=a$, and hence $\sigma(b)=bv$, 
$v \ne \Lambda$, $A_{i-1}=\{a\}$, $i=2$ and $X_{\sigma_1}=\emptyset$. 
\begin{lemma}[{\cite[Lemma~2.3]{Y}}]\label{arbitrarily_long}
Let $\tau:B \to B^+$ be a substitution such that $\tau(s)=s$ for some $s \in B$. 
Then the following are equivalent$:$
\begin{enumerate}[{\rm (i)}]
\item\label{arbitrary_power}
$s^p \in \L(\tau)$ for any $p \in \N;$
\item\label{prolonger}
there exist $c \in B \setminus \{s\}$, $k,l \in \N$ and $w \in B^\ast$ 
such that $\tau^k(c)=s^lcw$ or $\tau^k(c)=wcs^l$. 
\end{enumerate}
\end{lemma}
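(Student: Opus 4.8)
The plan is to prove the two implications separately; the direction (ii)$\Rightarrow$(i) is a short computation, while the substance of the lemma lies in (i)$\Rightarrow$(ii).

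For (ii)$\Rightarrow$(i), suppose first that $\tau^k(c)=s^lcw$ with $c\neq s$ and $l\ge 1$. Since $\tau(s)=s$ we have $\tau^k(s)=s$, hence $\tau^k(s^l)=s^l$, and applying $\tau^k$ to the relation gives $\tau^{2k}(c)=\tau^k(s^l)\tau^k(c)\tau^k(w)=s^{2l}cw\,\tau^k(w)$. Inductively $\tau^{mk}(c)=s^{ml}cw_m$ for suitable $w_m\in B^\ast$, so $\tau^{mk}(c)$ begins with $s^{ml}$ and therefore $s^{ml}\in\L(\tau)$ for every $m$, which is (i). The case $\tau^k(c)=wcs^l$ is handled by the same iteration, with the powers of $s$ now accumulating as a suffix. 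This step I expect to be routine.

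For (i)$\Rightarrow$(ii) I would track the letter that borders a long run of $s$. For each $p$ choose $n$ and $a$ with $s^p$ a factor of $\tau^n(a)$, look at a maximal run of $s$ of length at least $p$, and record the non-$s$ letter bordering it, say on the right; by pigeonhole over the finite alphabet $B$ a single letter $c\neq s$ occurs as this border for arbitrarily large $p$, so $s^pc\in\L(\tau)$ for all $p$ (if instead the long runs are always flush with the end of the word one uses the left border and the suffix alternative). The key observation is that, because $\tau(s)=s$, the border letter of such a run transforms under $\tau$ into the first non-$s$ letter of $\tau(c)$; writing $\phi(d)$ for that letter, the orbit $c,\phi(c),\phi^2(c),\dots$ lives in the finite set $B$ and is eventually periodic. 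Choosing $c_\ast$ on the cycle with $\phi^{k}(c_\ast)=c_\ast$ makes $c_\ast$ the first non-$s$ letter of $\tau^{k}(c_\ast)$, so $\tau^{k}(c_\ast)=s^{l}c_\ast w$ for some $l\ge 0$; the suffix form $wc_\ast s^{l}$ comes out symmetrically from the left-border situation.

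The main obstacle is to turn this $l\ge 0$ into the required $l\ge 1$, and the genuine difficulty hidden there is the presence of letters $d\neq s$ whose iterates $\tau^n(d)$ collapse to pure powers of $s$: such letters lengthen runs of $s$ without ever reproducing a bordering letter, so they derail both the definition of $\phi$ as an honest self-map and the bookkeeping of which border letter recurs. I would address this by first replacing $\tau$ with a power $\tau^{k}$ (legitimate since $X_{\tau^{k}}=X_\tau$) chosen so that every letter whose orbit collapses into $\{s\}^+$ already maps into $\{s\}^+$, and then analysing maximal runs of $s$ together with their two non-collapsing bordering letters. The plan is to show that if every periodic border letter produced $l=0$ (and likewise on the suffix side), then the lengths of the prefix-, suffix- and interior-runs of $s$ in all words $\tau^n(d)$ would be uniformly bounded, contradicting (i); hence some cyclic border letter must supply $l\ge 1$, giving (ii). Separating genuine reproduction of a border letter from mere padding by collapsing letters is exactly the delicate point, and is where the recurrence of non-$s$ letters guaranteed in the applications is used.
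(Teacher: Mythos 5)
A preliminary remark: the paper does not prove this lemma at all --- it is quoted from \cite[Lemma~2.3]{Y} --- so your proposal stands or falls on its own. Your direction (ii)$\Rightarrow$(i) is correct. The gap is in (i)$\Rightarrow$(ii), and it sits exactly where you yourself locate the difficulty: the letters whose iterates collapse into $\{s\}^+$. Your plan is to show that if every periodic border letter produced only $l=0$ (on both sides), then the runs of $s$ in the words $\tau^n(d)$ would be uniformly bounded, contradicting (i). That implication is false. Take $B=\{s,d,c\}$ with $\tau(s)=s$, $\tau(d)=s$, $\tau(c)=dc$, so that $\tau^n(c)=s^{n-1}dc$ for every $n$. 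Runs of $s$ are unbounded, so (i) holds. The only non-collapsing letter is $c$, and it is a fixed point of your border map $\phi$; yet it supplies $l=0$ forever, because at each step $c$ re-injects a fresh copy of the collapsing letter $d$ between itself and the run, and the run grows only because the older copies of $d$ turn into $s$. No word $\tau^k(c)=s^{k-1}dc$ has the form $s^lcw$ with $l\ge 1$ (the letter adjacent to the run is $d\ne c$) or the form $wcs^l$ (nothing follows $c$), and $\tau^k(d)=s$ does not even contain $d$. So (ii) fails while (i) holds: your dichotomy ``some cycle letter gives $l\ge 1$, or else runs are bounded'' misses precisely the case realized here, and no amount of bookkeeping can recover it.

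The example shows more: read literally, for an arbitrary substitution with a fixed letter, the stated equivalence is false, so the missing step cannot be filled at this level of generality. The lemma is imported from \cite{Y} together with the standing hypotheses it carries there, and in the present paper it is only ever invoked for $\tau=\sigma_2$ with $\sigma$ of some primitive components; condition (ii) of Definition~\ref{n_components} then guarantees that $\sigma^k(c)$ contains every letter of $A_2$ for each $c\in A_2\setminus A_1$, so no letter other than $s$ can collapse into $\{s\}^+$. Under that additional hypothesis (``no collapsing letters besides $s$'') your argument does close, and cleanly: $\phi(d)=$ the first non-$s$ letter of $\tau(d)$ is then an honest self-map of $B\setminus\{s\}$ satisfying ``first non-$s$ letter of $\tau^n(d)$ equals $\phi^n(d)$''; a maximal run of $s$ bordered on the left by $c$ and on the right by $c'$ grows under one application of $\tau$ exactly by the number of terminal $s$'s of $\tau(c)$ plus the number of initial $s$'s of $\tau(c')$; if every letter on a $\phi$- or $\psi$-cycle contributed nothing, these increments would die out after at most $\sharp B$ steps and all runs (old and newly created) would be uniformly bounded, contradicting (i); and a cycle letter $e$ whose image $\tau(e)$ begins with $s$ yields $\tau^k(e)=s^lew$ with $l\ge 1$ when $k$ is a multiple of the cycle length, symmetrically for the suffix form. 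So the correct repair is not a subtler treatment of collapsing letters but the explicit hypothesis excluding them.
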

Assume Lemma~\ref{arbitrarily_long}~\eqref{prolonger} with $\tau=\sigma_2$. 
Then $B=A_2$ and $s=a$. If 
$w=\Lambda$, then $c=b$ and $X_{\sigma_2}=\{a^\infty\}$. 
If $w \ne \Lambda$, then $\sigma_2$ is almost 
primitive, so that $X_{\sigma_2}$ is almost minimal \cite[Theorem~3.8]{Y}. 

Assume the existence of $p_1 \in \N$ such that $a^p \notin \L(\sigma_2)$ if 
$p \ge p_1$. Then a letter of $A_2 \setminus A_1$ occurs in $v$, so that 
$b$ occurs infinitely many times in a fixed point 
$\omega^+:=bv\sigma(v)\sigma^2(v)\sigma^3(v)\dots \in {A_2}^\N$ of $\sigma$. This 
implies that there exists a periodic point $\omega \in X_{\sigma_2}$ of 
$\sigma$ such that $\omega_{[0,\infty)}=\omega^+$. It follows therefore 
that $\L(X_{\sigma_2})=\L(\omega)$ and hence 
$X_{\sigma_2}=\overline{\Orb_{T_\sigma}(\omega)}$. 

\begin{proposition}\label{Chacon_type}
$X_{\sigma_2}$ is minimal and uniquely ergodic.
\end{proposition}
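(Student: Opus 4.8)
The plan is to prove the two assertions separately, each by reducing to the combinatorial data already assembled in this subsection: the two-sided periodic point $\omega$ (say $\sigma^m(\omega)=\omega$) with $\L(X_{\sigma_2})=\L(\omega)$ and $X_{\sigma_2}=\overline{\Orb_{T_\sigma}(\omega)}$; the one-sided fixed point $\omega^+=bv\sigma(v)\sigma^2(v)\dots$; the bound $p_1$ on the length of runs of $a$; and property~\eqref{closed}--(ii) of Definition~\ref{n_components}, which for $i=2$ supplies a $k\in\N$ such that every letter of $A_2$ occurs in $\sigma^k(c)$ for each $c\in A_2\setminus A_1$. Throughout I use that $b\in A_2\setminus A_1$.

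For minimality, since $X_{\sigma_2}=\overline{\Orb_{T_\sigma}(\omega)}$ and $\L(X_{\sigma_2})=\L(\omega)$, it suffices to prove that $\omega$ is uniformly recurrent, i.e.\ that every $w\in\L(\omega)$ occurs in $\omega$ with bounded gaps. First, because $a^p\notin\L(\sigma_2)$ for $p\ge p_1$, every window of $\omega$ of length $p_1$ meets $A_2\setminus A_1$, so the non-$a$ letters are syndetic in $\omega$. Next, fix a multiple $N$ of $m$ with $N\ge k$ and decompose $\omega=\prod_j\sigma^N(\omega_j)$ into $N$-blocks; by Definition~\ref{n_components}~(ii) each block $\sigma^N(\omega_j)$ with $\omega_j\in A_2\setminus A_1$ contains $b$, and since such indices $j$ are syndetic while the block lengths are bounded by $\max_d|\sigma^N(d)|$ (the $a$-blocks having length $1$), the letter $b$ itself occurs syndetically in $\omega$. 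Finally, the words $\sigma^N(b)$ form an increasing chain of prefixes of $\omega^+$ (as $\sigma(b)=bv$), so any given $w\in\L(\omega)\subset\L(\sigma_2)$, being a factor of some $\sigma^{N_0}(b)$ by Definition~\ref{n_components}~(ii), is a factor of $\sigma^N(b)$ for all $N\ge N_0$; re-reading $\omega=\prod_j\sigma^N(\omega_j)$ for such an $N$, a copy of $w$ sits inside every block $\sigma^N(\omega_j)$ with $\omega_j=b$. As these blocks recur with bounded gaps, $w$ does too. Hence $\omega$ is uniformly recurrent and $X_{\sigma_2}$ is minimal.

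For unique ergodicity I would invoke the standard criterion that a minimal subshift over a finite alphabet is uniquely ergodic exactly when every word $w\in\L(X_{\sigma_2})$ has a uniform frequency, that is, $\tfrac1\ell N(w,x_{[k,k+\ell)})$ converges as $\ell\to\infty$, uniformly in $k$ and in $x\in X_{\sigma_2}$, to a limit independent of $x$. To produce these frequencies I would again exploit the self-similarity $\omega=\sigma^N(\omega)$: writing any long factor of any $x$ as a concatenation of $N$-blocks $\sigma^N(d)$ together with two end pieces, the count $N(w,x_{[k,k+\ell)})$ equals, up to an error controlled by the number of block boundaries, a \emph{fixed} nonnegative linear combination of the numbers of occurrences of single letters and of length-two words in the underlying $N$-block sequence. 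This is precisely the linear recursion recorded by the incidence matrix of the auxiliary ($\le\!|w|$-block) substitution used in Section~\ref{PFTheory}; already at the level of single letters it reduces to the growth of the powers $M_{\sigma_2}^N$, whose block form is \eqref{Q_i} with $Q_1=[1]$ and $Q_2$ primitive. Convergence of the normalized counts then follows from Perron--Frobenius theory applied to $Q_2$, whose Perron eigenvalue $\lambda>1$ governs the dominant growth, while uniformity over all of $X_{\sigma_2}$ is supplied by the minimality just established.

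The main obstacle is exactly the non-primitivity forced by $\sigma(a)=a$: the matrix $M_{\sigma_2}$ is reducible, the fixed letter contributing the eigenvalue $1$, so Perron--Frobenius cannot be applied to $M_{\sigma_2}$ itself. The crux is to show that $\lambda$ is the \emph{unique, simple} dominant eigenvalue of (the auxiliary matrix of) $\sigma_2$, strictly larger than the eigenvalue $1$ coming from $a$; only then do the normalized count vectors converge to a single Perron direction, which is what prevents the fixed letter from spawning a second invariant measure. Equivalently, one may recode $X_{\sigma_2}$ using the finitely many blocks $a^{j}c$ with $c\in A_2\setminus A_1$ and $0\le j<p_1$, obtaining a conjugate subshift generated by a genuinely \emph{primitive} substitution, for which minimality and unique ergodicity are classical; there the delicate point is verifying that the recoding is a conjugacy and that the induced substitution is primitive. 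Either way, once the dominant eigenvalue is isolated, uniform convergence of the word frequencies, and hence unique ergodicity, follows.
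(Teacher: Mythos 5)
Your minimality argument is correct and is essentially the paper's own proof: both decompose $\omega$ into blocks $\sigma^N(\omega_j)$, use that letters of $A_2\setminus A_1$ occur in $\omega$ with gaps bounded by $p_1$ (so the $a$-blocks, of length $1$, cannot separate the interesting blocks by much), and use that every word of $\L(\sigma_2)$ occurs in $\sigma^N(c)$ for every $c\in A_2\setminus A_1$ once $N$ is large. The paper gets the last fact directly from condition (ii) of Definition~\ref{n_components}, while you route it through the prefixes $\sigma^N(b)$ of $\omega^+$ and the syndeticity of the single letter $b$; the difference is cosmetic.

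The unique ergodicity half, however, has a genuine gap, located precisely at the sentence ``uniformity over all of $X_{\sigma_2}$ is supplied by the minimality just established.'' Minimality never supplies uniform convergence of word frequencies: there exist minimal subshifts that are not uniquely ergodic, and the entire content of this proposition is that the implication does not come for free. What minimality gives is only that every $x\in X_{\sigma_2}$ has the same language as $\omega$, so one may restrict attention to factors of $\omega$; the uniformity in the position of the window inside $\omega$ is the whole analytic burden, and it must come from quantitative counting estimates. Concretely, after cutting a window of $\omega$ into $\sigma^p$-blocks, one must bound, relative to the main term, (a) occurrences of $w$ straddling block junctions, (b) the contribution of the blocks $\sigma^p(a)=a$ coming from $a$-runs of length up to $p_1$, and (c) the two partial blocks at the ends of the window. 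This is exactly what the paper does --- not in this proof, but in the proof of Theorem~\ref{uniqueness}, which is then applied back to this proposition: it induces on the compact open set $X_{\sigma_2}\setminus[a]$, invokes the criterion of Lemma~\ref{unique_Radon} (Queff\'elec's uniform-frequency criterion for the first-return map), and carries out an explicit $\epsilon$-estimate in which counts are normalized by the number of letters of $A_i\setminus A_{i-1}$ in the window rather than by its length; that normalization is how the reducibility caused by $\sigma(a)=a$ is neutralized. Your sketch names the right ingredients (the auxiliary substitutions of Section~\ref{PFTheory}, Perron--Frobenius theory for the primitive block $Q_2$), but the estimates themselves, which are the heart of the matter, are absent. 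Moreover, the ``crux'' you single out --- isolating $\theta_2>1$ as the simple dominant eigenvalue --- is in fact the easy part here: by Lemma~\ref{start_for_PF}~\eqref{when_1}, $\theta_2=1$ would force $Q_1(2)=\begin{bmatrix}1\end{bmatrix}$, which is impossible since $\sigma(b)=bv$ and $v$ contains a letter of $A_2\setminus A_1$. Your alternative route via recoding by the blocks $a^jc$ is viable in spirit (it is a hands-on version of the paper's inducing argument), but it is also left unexecuted: as stated the recoding need not intertwine $\sigma$ with a substitution on the block alphabet (e.g.\ when $\sigma(c)$ ends in $a$), and you would still have to verify primitivity of the induced substitution and transfer unique ergodicity back to $X_{\sigma_2}$ through the bounded-return-time correspondence.
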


\begin{proof}
Let $w \in \L(\omega)$. We may assume $\sigma(\omega)=\omega$. 
Take $k \in \N$ so that $w$ occurs in $\sigma^k(c)$ for 
any $c \in A_2 \setminus A_1$. Since 
$\omega=\dots\sigma^k(\omega_{-2})\sigma^k(\omega_{-1}).\sigma^k(\omega_0)
\sigma^k(\omega_1)\dots$, $w$ occurs in any factor  of $\omega$ whose length is 
$2\max_{c \in A_2 \setminus A_1}|\sigma^k(c)|+p_1$. This means the 
minimality of $X_{\sigma_2}$, since $w$ occurs infinitely often in $\omega$ 
with a bounded gap. 

The unique ergodicity will be proved below by using Theorem~\ref{uniqueness}.
\end{proof}

The Chacon substitution: $a \mapsto a$, $b \mapsto bbab$ satisfies the 
hypothesis of Proposition~\ref{Chacon_type}. The subshift $X_{\sigma_2}$ may 
be the orbit of a shift\nobreakdash-periodic point. If $A_2=\{a,b\}$ and 
$\sigma_2$ is defined by $a \mapsto a$ and $b \mapsto bab$, then $X_{\sigma_2}=
\{(ab)^\infty.(ab)^\infty, (ba)^\infty.(ba)^\infty\}$. It is a question whether 
$X_{\sigma_3}$ could be the orbit of a shift\nobreakdash-periodic point of 
periodic 3, or not.

We assume $u \ne \Lambda$ until Proposition~\ref{non_meager}. 
Consider the case $v = \Lambda$. 
Denoting $ua$ by $u$ afresh, we may write $\sigma(b)=ub$. Observe 
$A_i \setminus A_{i-1}=\{b\}$. In view of a configuration of words:
\begin{align*}
\sigma(b) & = & ub & \\
\sigma^2(b) & = & \sigma(u)ub & \\
\sigma^3(b) & = & \sigma^2(u)\sigma(u)ub & \\
\vdots & & \vdots & 
\end{align*}
we define a fixed point $\omega \in {A_i}^{-\N}$ of $\sigma$ by
$\omega =  \dots \sigma^5(u)\sigma^4(u)\sigma^3(u)\sigma^2(u)\sigma(u)ub$. 
Observe $\L(\omega)=\L(\sigma_i)$. 

Assume $u=a^{|u|}$. Then $A_{i-1}=\{a\}$, so that $i=2$. If $\sigma(a)=a$, then 
$X_{\sigma_2}=\{a^\infty\}$ and $X_{\sigma_1}=\emptyset$. If 
$\sigma(a)=a^p$ with $p \ge 2$, then $X_{\sigma_2}=X_{\sigma_1}=\{a^\infty\}$. 
Throughout the remainder of this section, we assume $u \ne a^{|u|}$. Then one 
of the following holds:
\begin{itemize}
\item
$i=2$ and $\sharp A_{i-1} \ge 2$;
\item
$i \ge 3$.
\end{itemize}

\begin{lemma}\label{inf_often}
Any word in $\L(X_{\sigma_i})$ occurs infinitely often in $\omega$. 
Consequently, $X_{\sigma_i} \subset {A_{i-1}}^\Z$.
\end{lemma}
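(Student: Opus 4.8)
The plan is to analyze the structure of the fixed point $\omega = \dots \sigma^2(u)\sigma(u)ub$, exploiting the fact that $\L(\omega) = \L(\sigma_i)$ (established just above the lemma) together with the primitivity of the component $\sigma_{i-1}$ acting on $A_{i-1}$. First I would fix an arbitrary word $w \in \L(X_{\sigma_i})$. Since $X_{\sigma_i} = \overline{\Orb_{T_\sigma}(\omega)}$ — because $\L(\omega) = \L(\sigma_i)$ means every admissible word already appears in $\omega$ — it suffices to show that $w$ occurs infinitely often in $\omega$, i.e.\ with occurrences at arbitrarily large negative positions.

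Next I would use the recursive configuration displayed in the excerpt: $\omega$ is built from the blocks $\sigma^j(u)$ for $j = 0, 1, 2, \dots$ reading leftward, followed on the right by the single letter $b$. Because $w \in \L(\sigma_i) = \L(\omega)$, the word $w$ occurs somewhere in $\omega$, hence inside some finite concatenation $\sigma^m(u)\sigma^{m-1}(u)\dots \sigma(u)ub$. The key observation is that the standing assumption $u \neq a^{|u|}$ forces $u$ to contain a letter of $A_{i-1} \setminus A_{i-2}$, and by primitivity of the top component $\sigma_{i-1}$ restricted to $A_{i-1} \setminus A_{i-2}$ (from Definition~\ref{n_components}(ii)), some power $\sigma^k$ applied to that letter eventually contains a prescribed target. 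More precisely, I would argue that any fixed occurrence of $w$ inside $\sigma^j(u)$ is reproduced inside $\sigma^{j+t}(u)$ for every $t \ge 0$: if $w$ occurs in $\sigma^j(u)$, then $w$ occurs in $\sigma^j(c)$ for some letter $c$ appearing in $u$, and since that letter $c \in A_{i-1}$ reoccurs in $\sigma^k(c')$ for infinitely many preimages $c'$ by primitivity, the occurrence propagates into $\sigma^{j'}(u)$ for arbitrarily large $j'$. As these blocks sit at positions tending to $-\infty$ in $\omega$, the word $w$ occurs infinitely often.

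For the concluding sentence $X_{\sigma_i} \subset {A_{i-1}}^\Z$, I would observe that the only letter of $A_i \setminus A_{i-1}$ present is $b$, which occurs in $\omega$ exactly once (at the rightmost position), since $\sigma(b) = ub$ with $u \in {A_{i-1}}^\ast$ and every $\sigma^j(u)$ stays in ${A_{i-1}}^\ast$ by Definition~\ref{n_components}(i). Thus $b$ occurs only finitely often — in fact once — in $\omega$, so by the first part no factor of $X_{\sigma_i}$ can contain $b$; equivalently, every $x \in X_{\sigma_i}$ lies in ${A_{i-1}}^\Z$. Here I would spell out that if some $x \in X_{\sigma_i}$ had a coordinate equal to $b$, then arbitrarily long words straddling that $b$ would be factors of $\omega$, forcing $b$ to occur infinitely often, a contradiction.

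The main obstacle I anticipate is making the propagation argument fully rigorous: one must verify that an occurrence of $w$ inside $\sigma^j(u)$ genuinely survives into $\sigma^{j'}(u)$ for unboundedly large $j'$, which requires tracking that the relevant letter of $u$ lies in $A_{i-1} \setminus A_{i-2}$ (so the primitivity clause of Definition~\ref{n_components}(ii) applies) and that the standing hypothesis $u \neq a^{|u|}$ indeed guarantees such a letter exists. The bookkeeping of which power $k$ reintroduces the carrying letter, and the fact that reapplying $\sigma$ can only shift occurrences further left without destroying them, is where the delicate estimate lies; everything else reduces to the identity $\L(\omega) = \L(\sigma_i)$ and the block decomposition already recorded in the excerpt.
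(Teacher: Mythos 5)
Your key propagation claim --- that ``any fixed occurrence of $w$ inside $\sigma^j(u)$ is reproduced inside $\sigma^{j+t}(u)$ for every $t \ge 0$'' --- is false, and this is a genuine gap rather than a bookkeeping issue. Take $A_1=\{a,c\}$, $A_2=\{a,c,b\}$, $\sigma(a)=ac$, $\sigma(c)=aac$, $\sigma(b)=ccab$, so that (in the notation of the lemma's setting) $u=cca$, which is not a power of a single letter; all standing hypotheses hold, with $k=1$ in Lemma~\ref{basic_lemma}~(i) since $\sigma(ab)=(accc)ab$. Every $\sigma$\nobreakdash-image of a letter of $A_1$ begins with $a$, ends with $c$, and contains no $cc$, so $cc \notin \L(\sigma_1)$; hence $cc$ occurs in no $\sigma^t(u)=\sigma^t(c)\sigma^t(c)\sigma^t(a)$ with $t\ge 1$ (all interior junctions read $ca$), even though $cc$ does occur inside the block $\sigma^0(u)=u$ of $\omega$. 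In fact $cc$ occurs in $\omega$ exactly twice, both times within distance four of the terminal $b$. The propagation breaks because an occurrence of $w$ inside $\sigma^j(u)$ typically straddles several letter-images $\sigma^j(u_l)$: Definition~\ref{n_components}~(ii) gives recurrence of single \emph{letters} (each letter $c$ occurs in $\sigma^k(c)$, so an occurrence inside a single $\sigma^j(c)$ would indeed propagate), but not recurrence of \emph{words} such as the factor $cc$ of $u$; your reduction ``$w$ occurs in $\sigma^j(c)$ for some letter $c$ appearing in $u$'' is exactly the step that fails. The example also exposes the deeper problem: your argument for the first assertion never uses the hypothesis $w\in\L(X_{\sigma_i})$ beyond $w\in\L(\omega)$, yet there exist $b$\nobreakdash-free words of $\L(\omega)$ with in-block occurrences that nevertheless occur only finitely often (the word $cc$ above). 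What separates those from the words of $\L(X_{\sigma_i})$ is two-sided extendability, which you never invoke. (There is also a circularity: you deduce $X_{\sigma_i}\subset {A_{i-1}}^\Z$ from the first assertion, but your proof of the first assertion tacitly treats only words whose occurrences avoid the terminal $b$, i.e.\ it already presupposes what the second assertion provides.)

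The paper's proof consists of precisely the ingredient you omit, and nothing else: if $v\in\L(X_{\sigma_i})$ occurred only finitely often in $\omega$, all its occurrences would lie within some distance $L$ of the right end of the left-infinite word $\omega$; but $v$ occurs in a two-sided sequence of $X_{\sigma_i}$, so $vw\in\L(X_{\sigma_i})\subset\L(\sigma_i)=\L(\omega)$ for right-extensions $w$ of arbitrary length, and any occurrence of such a $vw$ in $\omega$ exhibits an occurrence of $v$ with more than $L$ letters to its right --- a contradiction. No primitivity or propagation is needed, and the conclusion $X_{\sigma_i}\subset {A_{i-1}}^\Z$ then follows since $b$ occurs only once in $\omega$. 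Conversely, your propagation scheme cannot be completed without importing this extendability argument to locate a ``good'' occurrence, at which point the propagation machinery becomes superfluous.
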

\begin{proof}
Assume that some $v \in \L(X_{\sigma_i})$ occurs only finitely many times 
in $\omega$. Take $L \in \N$ so that $v$ does not occur in 
$\omega_{(-\infty,-L)}$. It follows that if $vw \in \L(X_{\sigma_i})$ then 
$|w| \le L$, which is a contradiction. 
\end{proof}

If $\{u_n\}_{n \in \N},\{v_n\}_{n \in \N} \subset A^+$ satisfy the 
properties: 
\begin{itemize}
\item
$u_n$ is a suffix of $u_{n+1}$ for each $n \in \N;$
\item
$v_n$ is a prefix of $v_{n+1}$ for each $n \in \N;$
\item
$\lim_{n \to \infty}|u_n|=\lim_{n \to \infty}|v_n|=\infty$,
\end{itemize}
we denote by $\lim_{n \to \infty}u_n.v_n$ a point 
$x \in A^\Z$ defined by $x_{[-|u_n|,|v_n|)}=u_nv_n$ for each $n \in \N$. 

\begin{proposition}\label{increasing_sequence}
Let $x \in X_{\sigma_i}$. Then the following are equivalent$:$
\begin{enumerate}[{\rm (i)}]
\item\label{nonempty}
$x \in X_{\sigma_i} \setminus X_{\sigma_{i-1}};$
\item\label{some_notin}
$x$ belongs to the orbit of a periodic point $y \in X_{\sigma_i} \setminus 
X_{\sigma_{i-1}}$ of $\sigma$, which is aperiodic under $T_\sigma$, such 
that for some $n \in \N$, $y_{[-n,n)} \notin \L(X_{\sigma_{i-1}})$ and 
$y_{(-\infty,-n-1]},y_{[n,\infty)} \in \L(X_{\sigma_{i-1}})$.
\end{enumerate}
The point $y$ would occur in one of the following fashions. 
If $\lim_{n \to \infty}|\sigma^n(c)|=\infty$ for any $c \in A$, then there exist 
$\gamma,\delta \in A_{i-1}$ and $q \in \N$ such that
\begin{itemize}
\item
$\gamma \delta \in \L(X_{\sigma_i}) \setminus \L(X_{\sigma_{i-1}});$ 
\item
$\sigma^{qj}(\gamma)_{|\sigma^{qj}(\gamma)|}=\gamma$ and 
$\sigma^{qj}(\delta)_1=\delta$ for any $j \in \N;$
\item
$y = \lim_{j \to \infty} \sigma^{qj}(\gamma).\sigma^{qj}(\delta)$.
\end{itemize}
If $A_1$ is a singleton, say $\{s\}$, then there exist $\gamma,\delta \in A_{i-1} 
\setminus A_1$, $p \in \N \cup \{0\}$ and $r \in \N$ such that 
\begin{itemize}
\item
$\sigma^{rj}(\delta)_{|\sigma^{rj}(\delta)|}=\delta$ and 
$\sigma^{rj}(\gamma)_1=\gamma$ for any $j \in \N;$
\item
$y$ is written in one of the following forms$:$
\[
y = \lim_{j \to \infty}s^j.\sigma^{rj}(\gamma), 
y = \lim_{j \to \infty}\sigma^{rj}(\delta).s^j \textrm{ and }
y = \lim_{j \to \infty}\sigma^{rj}(\delta).s^p\sigma^{rj}(\gamma).
\]
\end{itemize}
\end{proposition}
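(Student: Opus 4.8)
The plan is to treat the two implications separately, the reverse being routine and the forward one carrying all the content. For the reverse implication, a point $y$ with $y_{[-n,n)}\notin\L(X_{\sigma_{i-1}})$ cannot lie in $X_{\sigma_{i-1}}$, and since $X_{\sigma_{i-1}}$ and $X_{\sigma_i}$ are both $T_\sigma$\nobreakdash-invariant with $y\in X_{\sigma_i}$, the whole orbit $\Orb_{T_\sigma}(y)$, and in particular $x$, lies in $X_{\sigma_i}\setminus X_{\sigma_{i-1}}$. Throughout the forward implication I would use, via Lemma~\ref{inf_often}, that $X_{\sigma_i}\subset A_{i-1}^\Z$ and that every factor of a point of $X_{\sigma_i}$ occurs infinitely often in $\omega=\dots\sigma^2(u)\sigma(u)ub$; since the unique letter of $A_i\setminus A_{i-1}$ appearing in $\omega$ is the terminal $b$, every \emph{new} factor, i.e.\ every word of $\L(\sigma_i)\setminus\L(\sigma_{i-1})$, is forced to straddle a seam between consecutive blocks $\sigma^{j+1}(u)$ and $\sigma^{j}(u)$ of $\omega$.

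Given $x\in X_{\sigma_i}\setminus X_{\sigma_{i-1}}$, the first step is to \emph{localize the defect}: to produce an interval $[p,q]$ with $x_{(-\infty,p]},x_{[q,\infty)}\in\L(X_{\sigma_{i-1}})$ while $x_{[p,q]}\notin\L(\sigma_{i-1})$. The tool is desubstitution. Writing $x=T_\sigma^{k}\sigma(x')$ with $x'\in X_{\sigma_i}$, one checks $x'\notin X_{\sigma_{i-1}}$, for otherwise $\sigma(x')$ and all its shifts would lie in the $T_\sigma$\nobreakdash-invariant set $X_{\sigma_{i-1}}$; so newness is preserved under desubstitution. Crucially, when every letter grows the distance between two distinct seams of $x$ strictly contracts under $\sigma^{-1}$, so iterating the desubstitution forces $x$ to contain a single seam and hence a bounded defect window. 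When $A_1=\{s\}$ with $\sigma(s)=s$, an all\nobreakdash-$s$ stretch between seams is invariant under desubstitution; this is where the three separate forms appear, the defect being bounded by the growing sides but possibly containing an invariant block $s^p$ or abutting an entire tail $s^\infty$.

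With the defect localized, the second step passes to the limit. Recentering at the defect and desubstituting, the shortest new factor shrinks to a two\nobreakdash-letter seam $\gamma\delta\in\L(X_{\sigma_i})\setminus\L(X_{\sigma_{i-1}})$ with $\gamma,\delta\in A_{i-1}$, after which the finite data recording the recentered desubstitution must repeat. A pigeonhole argument then yields an exponent $q$ and letters, renamed $\gamma,\delta$, with $\sigma^{qj}(\gamma)$ ending in $\gamma$ and $\sigma^{qj}(\delta)$ beginning with $\delta$ for all $j$; put $y=\lim_{j\to\infty}\sigma^{qj}(\gamma).\sigma^{qj}(\delta)$. Then $y$ is a bilateral point fixed by $\sigma^q$, its factors are the words $\sigma^{qj}(\gamma\delta)\in\L(\sigma_i)$ so $y\in X_{\sigma_i}$, each concatenation $\sigma^{qj}(\gamma)\sigma^{qj}(\delta)$ exhibits the new seam $\gamma\delta$ at its junction so $y\notin X_{\sigma_{i-1}}$, and both tails consist of factors of $\sigma^{qj}(\gamma),\sigma^{qj}(\delta)$ with $\gamma,\delta\in A_{i-1}$, hence lie in $\L(X_{\sigma_{i-1}})$. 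The periodicity of the desubstitution also records the accumulated shift, aligning $x$ with a shift of $y$, so $x\in\Orb_{T_\sigma}(y)$. Finally $y$ is $T_\sigma$\nobreakdash-aperiodic, since a shift period would carry a copy of the new seam into the tail $y_{[n,\infty)}$, contradicting $y_{[n,\infty)}\in\L(X_{\sigma_{i-1}})$.

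The explicit forms are then read off. When every letter grows this is exactly $y=\lim_{j}\sigma^{qj}(\gamma).\sigma^{qj}(\delta)$ with $\gamma\delta\in\L(X_{\sigma_i})\setminus\L(X_{\sigma_{i-1}})$, as required. When $A_1=\{s\}$ the non\nobreakdash-growing $s$ splits the construction into the three cases $\lim_j s^j.\sigma^{rj}(\gamma)$, $\lim_j\sigma^{rj}(\delta).s^j$ and $\lim_j\sigma^{rj}(\delta).s^p\sigma^{rj}(\gamma)$, according to whether an $s^\infty$ tail replaces a growing side on the left, on the right, or a finite invariant block $s^p$ separates two growing sides. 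I expect the main obstacle to be the first step together with the matching: proving that the defect is genuinely bounded and that the two surrounding tails are \emph{uniquely} determined by the stabilized seam and the primitive dynamics of the lower components, so that the eventually periodic desubstitution of $x$ can be resummed into a single shift of the $\sigma^q$\nobreakdash-fixed point $y$. This rests on a recognizability\nobreakdash-type property for $\sigma_i$, which is delicate precisely because $\sigma_i$ is not primitive, and the non\nobreakdash-growing letter $s$ in the singleton case must be handled separately.
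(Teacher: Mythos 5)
Your reverse implication is fine, and your overall target (localize the defect, pigeonhole to stabilize flanking letters, resum into a $\sigma^q$\nobreakdash-periodic point) is the right shape, but the mechanism you propose has a genuine gap. Both of your key steps run on iterated desubstitution of $x$ itself: you need a coherent block structure on $x$ to speak of ``the seams of $x$'', to claim their mutual distances strictly contract, and to resum the ``eventually periodic desubstitution'' into a shift of $y$. As you yourself concede, this rests on a recognizability\nobreakdash-type property of $\sigma_i$ that you do not prove --- and that property is exactly what is unavailable in this non\nobreakdash-primitive setting. In case (B) the limit points the proposition produces have $s^\infty$ tails (with $\sigma(s)=s$), along which block boundaries are completely ambiguous, so unique desubstitution fails outright; moreover your contraction heuristic is false there, since an $s$\nobreakdash-block neither grows nor shrinks (you acknowledge this, but it means Step 1 has no proof in precisely the case that matters). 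Finally, case (B) --- which is the bulk of the paper's proof and the source of the three explicit forms --- is dispatched in your sketch by describing the expected answers rather than deriving them: for instance, nothing in your outline yields that the middle block stabilizes to a fixed $s^p$, which the paper obtains by showing $\delta s^p\epsilon$ cannot occur in $x$ for every $p$ (because the last, resp.\ first, letter of $A_{i-1}\setminus A_1$ occurring in $\sigma^{rj}(\delta)$, resp.\ $\sigma^{rj}(\epsilon)$, is $\delta$, resp.\ $\epsilon$).

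The paper's proof shows how to avoid recognizability altogether, using an ingredient you listed and then abandoned: by Lemma~\ref{inf_often}, every new central window $x_{[-h_j,h_j)}$ occurs in the explicit fixed point $\omega=\dots\sigma^2(u)\sigma(u)ub$, whose decomposition into blocks $\sigma^{n_j}(\omega_k)$ is known by construction, so $x$ is never desubstituted. Choosing $n_j$ with $\min_{c}|\sigma^{n_j}(c)|\ge 2h_j$ places each new window inside two consecutive blocks $\sigma^{n_j}(c)\sigma^{n_j}(d)$; pigeonholing over the pair $(c,d)$, over the boundary letters $(\gamma,\delta)$, and over an arithmetic progression of the $n_j$ produces a marker $\gamma\delta\notin\L(X_{\sigma_{i-1}})$. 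Since a factor lying inside a single block $\sigma^{n_j}(c)$ with $c\in A_{i-1}$ belongs to $\L(X_{\sigma_{i-1}})$, the new window must straddle the junction and contains $\gamma\delta$ exactly once; this unique occurrence simultaneously gives the $T_\sigma$\nobreakdash-aperiodicity of $x$ and the alignment identifying ${T_\sigma}^kx=\lim_{j\to\infty}\sigma^{qj}(\gamma).\sigma^{qj}(\delta)$. In other words, the defect localization you wanted as an input in Step 1 comes out as a consequence of the construction. If you insist on your bottom\nobreakdash-up route, you must replace recognizability by this marker argument (a new factor cannot lie inside one block of any chosen desubstitution, hence pins the block boundary), and you would still owe the full case analysis for $A_1=\{s\}$.
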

\begin{proof}
Assuming \eqref{nonempty}, we see \eqref{some_notin} in each of the cases:
\begin{enumerate}[(A)]
\item\label{CaseA}
$\lim_{n \to \infty}|\sigma^n(c)|=\infty$ for all $c \in A_1$;
\item\label{CaseB}
$A_1$ is a singleton, say $A_1=\{s\}$, and $\sigma(s)=s$.
\end{enumerate}
\paragraph{Case \eqref{CaseA}.} 
Take strictly increasing sequences $\{h_j\}_{j \in \N}, \{n_j\}_{j \in \N} 
\subset \N$ so that for each $j \in \N$, $x_{[-h_j,h_j)} \notin 
\L(X_{\sigma_{i-1}})$ and $\min_{c \in A_{i-1}}|\sigma^{n_j}(c)| \ge 2h_j$. 
Set
\[
K_j = \{k \in - \N ; x_{[-h_j,h_j)} \textrm{ occurs in } 
\sigma^{n_j}(\omega_{k-1}) \sigma^{n_j}(\omega_k)\}.
\]
Choose $c,d \in A_{i-1}$ so that there exist infinitely many $j$'s such that 
$\omega_{k_j-1}\omega_{k_j}=cd$ for some $k_j \in K_j$. 
By replacing $\{n_j\}_{j \in \N}$ with its appropriate subsequence, we may 
assume that for every $j \in \N$, $x_{[-h_j,h_j)}$ is a factor of 
$\sigma^{n_j}(c)\sigma^{n_j}(d)$, and $\sigma^{n_j}(c)_{|\sigma^{n_j}(c)|}$ and 
$\sigma^{n_j}(d)_1$ are 
constant, say $\gamma$ and $\delta$, respectively. We may assume furthermore 
that $n_{j+1}-n_j$ is constant, say $q$, which 
might be the least common multiple of $\min\{n \in \N;
\sigma^n(\gamma)_{|\sigma^n(\gamma)|}=\gamma\}$ and $\min\{n \in \N;\sigma^n(
\delta)_1=\delta\}$. There exists a sequence 
$\{w_j \in {A_{i-1}}^+;w_j \textrm{ is a factor of } 
\sigma^{qj}(\gamma \delta)\}_{j \in \N}$ 
which approximates $x$ arbitrarily close, because \eqref{CaseA} is assumed. 
Since $x_{[-h_j,h_j)} \notin \L(X_{\sigma_{i-1}})$, $\gamma \delta \notin 
\L(X_{\sigma_{i-1}})$ and each factor $x_{[-h_j,h_j)}$ of 
$\sigma^{qj}(\gamma)\sigma^{qj}(\delta)$ necessarily contains $\gamma \delta$ 
as a factor. Observe that $\gamma \delta$ occurs just once in $x_{[-h_j,h_j)}$, 
which would imply the shift\nobreakdash-aperiodicity of $x$. Then, 
${T_\sigma}^kx = \lim_{j \to \infty} \sigma^{qj}(\gamma).\sigma^{qj}(\delta)$ 
for some $k \in \Z$, which is a periodic point of $\sigma$. 

\paragraph{Case \eqref{CaseB}.} We have $i \ge 3$. 
By Lemma~\ref{arbitrarily_long}, 
$s^\infty \notin X_{\sigma_i} \setminus X_{\sigma_{i-1}}$. Take strictly 
increasing sequences $\{h_j\}_j, \{n_j\}_j \subset \N$ such that for every 
$j \in \N$, $x_{[-h_j,h_j)} \notin \L(X_{\sigma_{i-1}})$ and 
$\min_{c \in A_{i-1} \setminus A_1}|\sigma^{n_j}(c)| \ge 2h_j$. 
For each $j \in \N$, there exist $k_j \in -\N$ and $p_j,q_j \ge 0$ such that 
\begin{itemize}
\item
$x_{[-h_j,h_j)}$ occurs in $\sigma^{n_j}(\omega_{[k_j-q_j,k_j+p_j+1]})$;
\item
$\omega_{k_j}, \omega_{k_j+p_j+1} \in A_{i-1} \setminus A_1$;
\item
$\omega_{[k_j-q_j,k_j)}$ and $\omega_{[k_j+1,k_j+p_j]}$ are powers of $s$. 
\end{itemize}
Similar arguments to those in Case~\eqref{CaseA} may allow us to assume 
the existence of $w,w^\prime \in \L(\sigma_{i-1})$, $\delta,\gamma,\epsilon 
\in A_{i-1} \setminus A_1$ and $r \in \N$ such that 
\begin{itemize}
\item
for each $j \in \N$, $x_{[-h_j,h_j)}$ is a factor of 
$s^{q_j}\sigma^{rj}(w)s^{p_j}\sigma^{rj}(w^\prime)$;
\item
for each $j \ge 0$, the first and the last letters of $A_{i-1} \setminus A_1$ 
to occur in $\sigma^{rj}(w)$ are $\delta$ and $\gamma$, respectively;
\item
for each $j \ge 0$, the first letter of $A_{i-1} \setminus A_1$ to occur in 
$\sigma^{rj}(w^\prime)$ is $\epsilon$.
\end{itemize}
Let $m_j$ be an occurrence of $x_{[-h_j,h_j)}$ in 
$s^{q_j}\sigma^{rj}(w)s^{p_j}\sigma^{rj}(w^\prime)$. 
Put $m^\prime_j=m_j+2h_j-1$. It might be sufficient to consider the following 
cases:
\begin{enumerate}[(I)]
\item\label{I}
$\sharp \{j \in \N;1 \le m_j \le q_j\}=\infty$;
\item\label{II}
$\sharp \{j \in \N;0 < m_j - q_j \le |\sigma^{rj}(w)|\}=\infty$.
\end{enumerate}

Let us first consider Case~\eqref{I}. Let $l_j=\max\{l \ge 0;s^l 
\textrm{ is a prefix of } \sigma^{rj}(w^\prime)\}$. Assume 
$\sharp \{j \in \N;m^\prime_j \le q_j + |\sigma^{rj}(w)|+p_j+l_j\}=\infty$.  
Since 
$x \notin X_{\sigma_{i-1}} \cup  \{s^\infty\}$, there exists 
$\{j_k\}_k \subset \N$ such that $\lim_{k \to \infty}(q_{j_k}-m_{j_k})=
\infty$  or $\lim_{k \to \infty}(m^\prime_{j_k}-q_{j_k}-|\sigma^{rj_k}(w)|)=
\infty$. Hence, we may assume that either for every $k$, the first letter of 
$\sigma^{rj_k}(w)$ is $\gamma$, or for every $k$, the last letter of 
$\sigma^{rj_k}(w)$ is $\delta$. Then, one of the following holds:
\begin{itemize}
\item
${T_\sigma}^kx=\lim_{j \to \infty}s^j.\sigma^{rj}(\gamma)$ for some 
$k \in \Z$, and $s^p \gamma \notin \L(X_{\sigma_{i-1}})$ for some $p \in \N$;
\item
${T_\sigma}^kx=\lim_{j \to \infty}\sigma^{rj}(\delta).s^j$ for some $k \in \Z$, 
and $\delta s^p \notin \L(X_{\sigma_{i-1}})$ for some $p \in \N$.
\end{itemize}
This shows the conclusion. 

Let us assume 
$\sharp \{j \in \N; m^\prime_j > p_j+|\sigma^{rj}(w)|+q_j+l_j\}=\infty$. If 
for any $p \in \N$, $\delta s^p \epsilon$ occurs in $x$, then each of them 
occurs infinitely many times in $\omega$. However, it is 
impossible, because the last (resp.\ first) letter of $A_{i-1} \setminus A_1$ 
to occur in $\sigma^{rj}(\delta)$ (resp.\ $\sigma^{rj}(\epsilon)$) is $\delta$ 
(resp.\ $\epsilon$). Hence, $\{p_j\}_j$ is bounded, and the last (resp.\ first) 
letter of $\sigma^{rj}(\delta)$ (resp.\ $\sigma^{rj}(\epsilon)$) is $\delta$ 
(resp.\ $\epsilon$). Then, 
${T_\sigma}^k x=\lim_{j \to \infty}\sigma^{rj}(\gamma).\sigma^{rj}(s^p).
\sigma^{rj}(\delta)$ for some $k \in \Z$ and some $p \in \N$, and we have 
$\gamma s^p \delta \notin \L(X_{\sigma_{i-1}})$. The same argument works also 
in Case~\eqref{II}. This completes the proof.
\end{proof}

The following is an immediate consequence of 
Proposition~\ref{increasing_sequence}.
\begin{corollary}
There is a possibly empty set $\{x_j \in X_{\sigma_i} \setminus 
X_{\sigma_{i-1}};1 \le j \le N\}$ of periodic points of $\sigma$ such that 
\begin{enumerate}[{\rm (i)}]
\item
$\overline{\Orb_{T_\sigma}(x_j)}  =X_{\sigma_{i-1}} \cup \Orb_{T_\sigma}(x_j)$ 
$($a disjoint union$)$ for any integer $j$ with $1 \le j \le N;$
\item
$X_{\sigma_i} \setminus X_{\sigma_{i-1}} =\bigcup_{j=1}^N \Orb_{T_\sigma}(x_j)$ 
$($a disjoint union$)$.
\end{enumerate}
\end{corollary}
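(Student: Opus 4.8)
The plan is to read the finite decomposition straight off Proposition~\ref{increasing_sequence} and then compute each orbit closure by hand. First I would settle (ii). Proposition~\ref{increasing_sequence} asserts that every $x\in X_{\sigma_i}\setminus X_{\sigma_{i-1}}$ lies in the $T_\sigma$-orbit of a $\sigma$-periodic point $y$ presented in one of the explicit limit forms. Each such $y$ is determined by the finite data $\gamma,\delta\in A_{i-1}$ together with the exponents $p,q,r$, and these exponents are bounded (they are least common multiples of return times of letters of the finite alphabet $A$, and the proof of Proposition~\ref{increasing_sequence} already shows $\{p_j\}$ is bounded). Hence only finitely many such periodic points occur, so finitely many orbits; let $x_1,\dots,x_N$ (possibly $N=0$) represent the distinct ones. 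Since $X_{\sigma_i}\setminus X_{\sigma_{i-1}}$ is a difference of $T_\sigma$-invariant sets it is $T_\sigma$-invariant, so it contains each $\Orb_{T_\sigma}(x_j)$; conversely Proposition~\ref{increasing_sequence} puts every point in one of them, and distinct orbits of the homeomorphism $T_\sigma$ are disjoint. This gives (ii), and the remaining work is (i).

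Next I would prove the inclusion $\overline{\Orb_{T_\sigma}(x_j)}\subseteq X_{\sigma_{i-1}}\cup\Orb_{T_\sigma}(x_j)$. After a shift I may take $x_j=y$ with $y_{[-n,n)}\notin\L(X_{\sigma_{i-1}})$ while both tails $y_{(-\infty,-n-1]},y_{[n,\infty)}\in\L(X_{\sigma_{i-1}})$; in fact each tail is a one-sided $\sigma$-fixed point over $A_{i-1}$, so by Definition~\ref{n_components}(i) all of its factors lie in $\L(\sigma_{i-1})$. Because the central word $y_{[-n,n)}$ is absent from $\L(X_{\sigma_{i-1}})$, it can occur in $y$ only inside a bounded region about the origin. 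Thus for any $z\in\overline{\Orb_{T_\sigma}(x_j)}$ not in the orbit, writing $z=\lim_k T_\sigma^{m_k}y$ with $|m_k|\to\infty$, every fixed window of $z$ eventually matches a window of one of the two tails, whence $\L(z)\subseteq\L(\sigma_{i-1})$ and $z\in X_{\sigma_{i-1}}$. The union is disjoint since $x_j\notin X_{\sigma_{i-1}}$ and $X_{\sigma_{i-1}}$ is $T_\sigma$-invariant.

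The reverse inclusion $X_{\sigma_{i-1}}\subseteq\overline{\Orb_{T_\sigma}(x_j)}$ is where I expect the real difficulty. For the orbit closure of a single point one has $\overline{\Orb_{T_\sigma}(x_j)}=\{x\in A^\Z;\L(x)\subseteq\L(x_j)\}$, and $X_{\sigma_{i-1}}=\{x;\L(x)\subseteq\L(\sigma_{i-1})\}$, so it suffices to prove $\L(X_{\sigma_{i-1}})\subseteq\L(x_j)$. By the explicit form one tail of $x_j$ is the one-sided $\sigma^q$-fixed point seeded by one of $\gamma,\delta$; if that seed $\eta$ lies in the top component $A_{i-1}\setminus A_{i-2}$, then Definition~\ref{n_components}(ii) at level $i-1$ gives that every $c\in A_{i-1}$ occurs in $\sigma^k(\eta)$, so $\L(\sigma^m(c))\subseteq\L(\sigma^{m+k}(\eta))\subseteq\L(x_j)$ for all $c,m$, and hence $\L(\sigma_{i-1})\subseteq\L(x_j)$. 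The crux is therefore to show that at least one of the two \emph{fixed} seeds $\gamma,\delta$ reaches the top component, rather than sitting deeper in the filtration; note that the raw letters of the defect word may themselves lie arbitrarily low, so this must be argued at the level of the fixed seed letters. I would prove it by tracing how the new adjacency $\gamma\delta\in\L(\sigma_i)\setminus\L(\sigma_{i-1})$ is created: descending through $\sigma$, the first place a precursor defect appears is inside the image of a letter of $A_i\setminus A_{i-1}$, as in Lemma~\ref{basic_lemma}, and combining this with the left-/right-fixedness of $\gamma,\delta$ and with condition (ii) forces one seed up into $A_{i-1}\setminus A_{i-2}$ (equivalently, into the top recurrent component, which already suffices since only $\L(X_{\sigma_{i-1}})\subseteq\L(x_j)$ is needed). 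Once this level control is in place, the three inclusions combine to give (i); everything else above is routine.
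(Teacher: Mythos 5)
Your overall frame is sound and honestly flagged: part (ii) modulo finiteness, the forward inclusion in (i), and the disjointness statements are all correct, and you are right that the statement (which the paper simply declares an immediate consequence of Proposition~\ref{increasing_sequence}, with no written proof) really hinges on the reverse inclusion $X_{\sigma_{i-1}}\subseteq\overline{\Orb_{T_\sigma}(x_j)}$. But the proposal has two genuine gaps. The first is the finiteness of $N$. For periodic points of the third form $y=\lim_{j\to\infty}\sigma^{rj}(\delta).s^p\sigma^{rj}(\gamma)$ the orbit remembers $p$ (it is the length of the $s$-block separating the two non-$s$ flanking letters), so distinct admissible values of $p$ produce distinct orbits. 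The boundedness of $\{p_j\}$ in the proof of Proposition~\ref{increasing_sequence} is a statement about the approximating sequence of one fixed point $x$ --- applied to $x=y$ itself it is vacuous --- so it gives no bound on the set of values of $p$ for which such a point lies in $X_{\sigma_i}\setminus X_{\sigma_{i-1}}$. A real argument is needed here (for instance: each word $\sigma^{rj}(\delta)s^p\sigma^{rj}(\gamma)$ lies in $\L(X_{\sigma_i})$, hence by Lemma~\ref{inf_often} occurs infinitely often in $\omega$, and the block structure of $\omega$ excludes all but finitely many $p$). Also, $p$ is not a least common multiple of return times; only $q$ and $r$ are, and for those the issue is moot anyway, since the limit point does not depend on which admissible $q$ or $r$ one takes.

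The second gap is the crux itself, and your plan for it starts from a misquotation. You write the defect as $\gamma\delta\in\L(\sigma_i)\setminus\L(\sigma_{i-1})$, but Proposition~\ref{increasing_sequence} gives only $\gamma\delta\in\L(X_{\sigma_i})\setminus\L(X_{\sigma_{i-1}})$, and the difference is exactly the danger zone: $\L(\sigma_{i-1})\setminus\L(X_{\sigma_{i-1}})$ is in general nonempty --- its elements are precisely what generate the level-$(i-1)$ defect points. A defect adjacency $\gamma\delta$ may therefore lie in $\L(\sigma_{i-1})$; it then occurs inside the blocks $\sigma^j(u)$ of $\omega$, and your descent (``the first precursor appears inside the image of a letter of $A_i\setminus A_{i-1}$, as in Lemma~\ref{basic_lemma}'') never gets started for such adjacencies. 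So the key claim --- that some seed lies in $A_{i-1}\setminus A_{i-2}$, or at least in a component whose iterated images exhaust $\L(X_{\sigma_{i-1}})$ --- is not established, and it cannot be waved through: if both seeds could sit in $A_{i-2}$ while $\gamma\delta\notin\L(X_{\sigma_{i-1}})$, then every factor of either tail of $y$ would lie in $\L(\sigma_{i-2})$, whence $\overline{\Orb_{T_\sigma}(y)}\subseteq X_{\sigma_{i-2}}\cup\Orb_{T_\sigma}(y)$, and item (i) would fail whenever $X_{\sigma_{i-2}}\subsetneq X_{\sigma_{i-1}}$. In short, you have correctly located the difficulty that the paper's one-line justification passes over, but the proposal does not close it.
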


\begin{example}\label{ex1}
We shall see substitutions satisfying the hypothesis of 
Proposition~\ref{increasing_sequence}. 
\begin{enumerate}[(i)]
\item\label{unique_1}
Set $A=\{a,b,c\}$. Let $w \in \{a,b\}^+$. 
Define $\sigma: A \to A^+$ by $a \mapsto ab, b \mapsto a, c \mapsto wc$. 
Since $\{aa, ab, ba\} \subset \L(X_{\sigma_1})$ and $bb \notin 
\L(\sigma_2)$, we have $X_{\sigma_2} \setminus X_{\sigma_1} = \emptyset$. 
\item\label{unique_2}
Set $A=\{a,b,c,d\}$. Define $\sigma:A \to A^+$ by $a \mapsto abca, 
b \mapsto bacb, c \mapsto cbac, d \mapsto abbcad$.
Since $\{aa, bb\} \subset \L(\sigma_2) \setminus \L(X_{\sigma_1})$, 
\[
X_{\sigma_2} \setminus X_{\sigma_1} = 
\Orb_{T_\sigma}\left(\lim_{n \to \infty}\sigma^n(a).\sigma^n(a)\right) \cup 
\Orb_{T_\sigma}\left(\lim_{n \to \infty}\sigma^n(b).\sigma^n(b)\right).
\]
\item
Set $A=\{a,b,c,d,e\}$. Define $\sigma:A \to A^+$ by $a \mapsto a, b \mapsto 
cba, c \mapsto cbc, d \mapsto dc, e \mapsto bde$. It follows that 
$X_{\sigma_1}=\emptyset$, $X_{\sigma_2}$ is almost minimal with a unique 
fixed point $a^\infty$, 
\[
X_{\sigma_3} \setminus X_{\sigma_2}=
\Orb_{T_\sigma}\left(\lim_{n \to \infty}\sigma^n(c).\sigma^n(c)\right) 
\textrm{ and } 
X_{\sigma_4} \setminus X_{\sigma_3}=
\Orb_{T_\sigma}\left(\lim_{n \to \infty}a^n.\sigma^n(d)\right).
\]
\end{enumerate}
\end{example}

We assume $v \ne \Lambda$ until Proposition~\ref{non_meager}. 
In view of a configuration of words:
\begin{align*}
\sigma(ab) &=  & ua & bv \\
\sigma^2(ab) &= & \sigma(u)ua & b v \sigma(v) \\
\sigma^3(ab) &= & \sigma^2(u)\sigma(u)ua & b v \sigma(v) \sigma^2(v) \\
\vdots & & & \vdots
\end{align*}
we define a point $\omega \in X_{\sigma_i} \setminus X_{\sigma_{i-1}}$ by
\[
\omega = \dots \sigma^4(u)\sigma^3(u)\sigma^2(u)\sigma(u)ua. b v \sigma(v) 
\sigma^2(v) \sigma^3(v) \sigma^4(v) \dots
\]
Following \cite[Definition~2.5]{Y}, we make a definition:
\begin{definition}
We call the point $\omega$ a {\em quasi\nobreakdash-fixed point} of the 
substitution $\sigma$. We refer to a quasi\nobreakdash-fixed point of a 
power $\sigma^k$ as a {\em quasi\nobreakdash-periodic point} of $\sigma$. 
A quasi\nobreakdash-periodic point $x \in X_{\sigma_i} \setminus 
X_{\sigma_{i-1}}$ is said to be of a {\em primitive type} if it holds that 
$x_k \in A_i \setminus A_{i-1} \Leftrightarrow k=0$.
\end{definition}
It follows from the construction of $\omega$ that
\begin{enumerate}[(i)]
\item
for every $k \in \N$, there exists an integer $l_k$ with 
$0 \le l_k < |\sigma^k(b)|$ such that 
${T_\sigma}^{l_k}\sigma^k(\omega) = \omega$;
\item
$\L(\omega) = \L(\sigma_i)=\L(X_{\sigma_i})$;
\item
$\overline{\Orb_{T_\sigma}(\omega)}=X_{\sigma_i}$.
\end{enumerate}
Recall that $x \in A^\Z$ is said to be 
{\em positively recurrent} if for every $n \in \N$, there is $i \in \N$ such 
that $x_{[i-n,i+n)}=x_{[-n,n)}$.
\begin{lemma}\label{pos_rec}
\begin{enumerate}[{\rm (i)}]
\item
$\omega$ is aperiodic under ${T_\sigma}$. 
\item\label{positively_rec}
$\omega$ is positively recurrent if and only if $v \in {A_i}^+ 
\setminus {A_{i-1}}^+$.
\item\label{BG}
Put $K_n=\{k \in \Z; \omega_{[k,k+n)} \in {A_{i-1}}^+\}$ for $n \in \N$. For 
every $n \in \N$, there exists $m \in \N$ such that 
$\{l \le k < l+m;k \in K_n\} \ne \emptyset$ for any $l \in \Z$. 
\end{enumerate}
\end{lemma}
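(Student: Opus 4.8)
The plan rests on two structural facts. First, since $\sigma(a)=ua$ and $\sigma(b)=bv$, the negative coordinates of $\omega$ are read off from $\dots\sigma^2(u)\sigma(u)ua$, and all their letters lie in $A_{i-1}$ because $u,a\in A_{i-1}$ and $\sigma({A_{i-1}})\subset{A_{i-1}}^+$; thus $\omega_k\in A_{i-1}$ for every $k<0$, while $\omega_0=b\in A_i\setminus A_{i-1}$ and $\omega_{[0,\infty)}$ is the one-sided fixed point $bv\sigma(v)\sigma^2(v)\dots$ of $\sigma$. Second, I will use the consequence of Definition~\ref{n_components}(ii) that the diagonal block $Q_i$ in \eqref{Q_i} is primitive and that every $w\in\L(\sigma_i)$ occurs in $\sigma^N(d)$ for every $d\in A_i\setminus A_{i-1}$ once $N$ is large: writing $w$ as a factor of some $\sigma^M(e)$ and using that $e$ occurs in $\sigma^k(d)$, the word $\sigma^M(e)$, and hence $w$, occurs in $\sigma^{M+k}(d)$. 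Part (i) then follows immediately: if $T_\sigma^p\omega=\omega$ with $p\ge1$, then $\omega_{-p}=\omega_0=b\notin A_{i-1}$, contradicting $\omega_{-p}\in A_{i-1}$.

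For (ii) I would prove the equivalent statement that $b$ occurs infinitely often in $\omega_{[0,\infty)}$ exactly when $v\in{A_i}^+\setminus{A_{i-1}}^+$. If $v\in{A_{i-1}}^+$, an induction gives $\sigma^m(b)=b\cdot v\sigma(v)\dots\sigma^{m-1}(v)$ with tail over $A_{i-1}$, so $b$ occurs once in $\omega$; since $\omega_{[-n,n)}$ contains $\omega_0=b$, an equality $\omega_{[\ell-n,\ell+n)}=\omega_{[-n,n)}$ would force $\omega_\ell=b$ with $\ell\ge1$, which is impossible, so $\omega$ is not positively recurrent. If instead $v\notin{A_{i-1}}^+$, then $Q_i$ is primitive with Perron root greater than $1$ (it equals $[\,1\,]$ precisely when $A_i\setminus A_{i-1}=\{b\}$ and $v\in{A_{i-1}}^+$), so $N(b,\sigma^m(b))=(Q_i^m)_{b,b}\to\infty$ and $b$ occurs at arbitrarily large coordinates of $\omega_{[0,\infty)}$. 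Using the routing fact, each $w\in\L(\omega)=\L(\sigma_i)$ lies in a tile $\sigma^N(b)$, and since these tiles recur arbitrarily far to the right in $\omega_{[0,\infty)}=\prod_{j\ge0}\sigma^N(\omega_j)$, every such $w$, in particular $w=\omega_{[-n,n)}$, recurs to the right; this is positive recurrence.

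For (iii), fix $n$ and choose $N$ so large that $\sigma^N(d)$ contains the word $\omega_{[-n,0)}\in{A_{i-1}}^n$ for every $d\in A_i\setminus A_{i-1}$, which is possible by the routing fact since ${A_{i-1}}^n\in\L(\sigma_i)$. Tiling $\omega$ at level $N$ via $T_\sigma^{l_N}\sigma^N(\omega)=\omega$ writes it as a bi-infinite concatenation of blocks $\sigma^N(\omega_j)$. I claim any factor $W$ of $\omega$ containing no element of ${A_{i-1}}^n$ satisfies $|W|<2\max_{c\in A_i}|\sigma^N(c)|+n$: such a $W$ can contain no complete tile $\sigma^N(d)$ with $d\in A_i\setminus A_{i-1}$ (each contains an element of ${A_{i-1}}^n$), so the tiles lying wholly in $W$ are all over $A_{i-1}$, hence consecutive and spanning a word over $A_{i-1}$ of length less than $n$, while the two boundary tiles contribute less than $2\max_{c\in A_i}|\sigma^N(c)|$. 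Consequently words of ${A_{i-1}}^n$ occur in $\omega$ with bounded gaps, which is exactly the assertion with $m$ any integer exceeding this bound.

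The main obstacle is the converse half of (ii) together with the whole of (iii): both require upgrading the letter-level primitivity of Definition~\ref{n_components}(ii) to control over the recurrence and spacing of entire words in $\omega_{[0,\infty)}$. The pivotal step is the routing observation, that every word of $\L(\sigma_i)$ sits inside $\sigma^N(d)$ for each marker $d$, together with, for (iii), the consequence that a marker tile is eventually long enough to contain a prescribed block of ${A_{i-1}}^n$; once these are secured, the tiling bookkeeping is routine. I would therefore isolate the routing statement first and invoke it in both parts.
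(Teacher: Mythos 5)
Your proof is correct in substance, and most of it parallels the paper. Part (i) is the paper's argument verbatim ($\omega_k \in A_{i-1}$ for $k<0$, $\omega_0=b\notin A_{i-1}$). In (ii), the direction where $v \in {A_{i-1}}^+$ is also the paper's ($b$ occurs only at coordinate $0$). Your (iii) is the same idea as the paper's very terse proof (choose $p$ so that every word of $\L_n(\sigma_i)$ occurs in $\sigma^p(c)$ for every $c \in A_i \setminus A_{i-1}$, then tile $\omega$ at level $p$), with the tiling bookkeeping spelled out; your bound $m \ge 2\max_{c \in A_i}|\sigma^N(c)|+n$ is of the right form. Where you genuinely diverge is the direction of (ii) with $v \in {A_i}^+\setminus{A_{i-1}}^+$: you obtain recurrence of the letter $b$ from Perron--Frobenius theory ($Q_i$ primitive with Perron root $>1$) and then upgrade letters to words by routing. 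The paper needs no matrix theory here: it picks a letter $v_j \in A_i\setminus A_{i-1}$ of $v$, notes that $\sigma^n(v_j)$ sits inside $\sigma^n(v)$, hence inside $\omega_{[0,\infty)}=bv\sigma(v)\sigma^2(v)\cdots$ at positions tending to infinity, and that $\sigma^n(ab)$ --- which contains the central words $\omega_{[-m,m)}$ for large $n$ --- occurs in $\sigma^{n+k}(v_j)$ once $ab$ occurs in $\sigma^k(v_j)$. Both routes work; the paper's is more elementary, while yours has the side benefit of exhibiting $\theta_i>1$, a fact the paper only exploits later.

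One inaccuracy needs repair: you assume $\sigma(a)=ua$ and $\sigma(b)=bv$, but the setup only provides $\sigma(ab)=uabv$. Since $\sigma(a)\in{A_{i-1}}^+$ need only be a prefix of $ua$, in general $\sigma(b)=sbv$ with $s$ a nonempty suffix of $ua$; for instance $\sigma(a)=aa$, $\sigma(b)=abab$ gives $\sigma(ab)=uabv$ with $u=aa$, $v=ab$, yet $\sigma(b)\ne bv$. Consequently $\omega_{[0,\infty)}$ is in general not a one-sided fixed point of $\sigma$, and two of your statements fail as written: the formula $\sigma^m(b)=b\,v\sigma(v)\cdots\sigma^{m-1}(v)$ and the tiling identity $\omega_{[0,\infty)}=\prod_{j\ge 0}\sigma^N(\omega_j)$. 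This does not break the argument, because every use can be rerouted through the quasi-fixed relation ${T_\sigma}^{l_N}\sigma^N(\omega)=\omega$, which you already use correctly in (iii): for the non-recurrence direction of (ii) the only fact needed is $\omega_{[1,\infty)}=v\sigma(v)\sigma^2(v)\cdots\in{A_{i-1}}^{\N}$, which is immediate from the definition of $\omega$; for the recurrence direction, $\sigma^m(b)$ occurs in $\omega$ covering coordinate $0$, and since all letters of $\omega$ at negative coordinates lie in $A_{i-1}$, all $N(b,\sigma^m(b))$ occurrences of $b$ inside that window sit at nonnegative coordinates, so $N(b,\sigma^m(b))\to\infty$ still forces $b$ to occur at arbitrarily large positive coordinates, and the (offset) bi-infinite tiling by blocks $\sigma^N(\omega_j)$ still places copies of $\sigma^N(b)$ arbitrarily far to the right.
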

\begin{proof}
Since 
$\omega_{(-\infty, -1]} \in {A_{i-1}}^{-\N}$ and $\omega_0 \notin A_{i-1}$, 
$\omega$ is aperiodic under $T_\sigma$. 

If $v \in {A_{i-1}}^+$, then $\omega$ is not positively recurrent, because 
$\omega_n \ne \omega_0$ for every $n>0$. Suppose $v_j \in A_i \setminus 
A_{i-1}$ with $1 \le j \le |v|$. Take $k \in \N$ so that for any $c \in A_i 
\setminus A_{i-1}$, $ab$ occurs in $\sigma^k(c)$. Since for every $n \in \N$ and 
any $c \in A_i \setminus A_{i-1}$, $\sigma^n(ab)$ occurs in $\sigma^{n+k}(c)$, 
$\omega$ is positively recurrent. 

If $v \in {A_{i-1}}^+$, then \eqref{BG} is trivial. Assume $v \in 
{A_i}^+ \setminus {A_{i-1}}^+$. Given $n \in \N$, choose $p \in \N$ so that any 
word belonging to $\L_n(\sigma_i)$ occurs in $\sigma^p(c)$ for any 
$c \in A_i \setminus A_{i-1}$. Then, \eqref{BG} holds with 
$m=2\max\{\max_{c \in A_i \setminus A_{i-1}}|\sigma^p(c)|, n\}$. 
\end{proof}

If $u=a^{|u|}$, then $\sigma(a)=a^p$ for some $p \in \N$, which forces that 
$i=2$, $A_1=\{a\}$ and $\sigma_2$ is almost primitive. From now on, we assume 
$u \ne a^{|u|}$. 

Consider the case $v \in {A_{i-1}}^+$. It follows that 
$A_i \setminus A_{i-1}=\{b\}=\{\omega_0\}$, $\omega$ is of a primitive type, and 
$\overline{\Orb_{T_\sigma}(\omega)}=
X_{\sigma_{i-1}} \cup \Orb_{T_\sigma}(\omega)$ (a disjoint union).
\begin{proposition}\label{rec_trivial2}
There is a possibly empty set 
$\{x_j \in (X_{\sigma_i} \setminus X_{\sigma_{i-1}}) \cap {A_{i-1}}^\Z; 
1 \le j \le N\}$ of periodic points  of the substitution $\sigma$ such that 
\begin{enumerate}[{\rm (i)}]
\item
$X_{\sigma_i} \setminus X_{\sigma_{i-1}} = \Orb_{T_\sigma}(\omega) \cup 
\bigcup_{j=1}^N \Orb_{T_\sigma}(x_j)$ $($a disjoint union$);$
\item
if $x_j$ is periodic under $T_\sigma$, then 
$A_1$ is a singleton, say $\{s\}$, and $x_j=s^\infty;$
\item
if $x_j$ is aperiodic under $T_\sigma$, then 
$\overline{\Orb_{T_\sigma}(x_j)}=X_{\sigma_{i-1}} \cup \Orb_{T_\sigma}(x_j)$.
\end{enumerate}
\end{proposition}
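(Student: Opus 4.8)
The plan is to separate $X_{\sigma_i}\setminus X_{\sigma_{i-1}}$ into the points that contain the letter $b$ and the points that lie in ${A_{i-1}}^\Z$, and to handle the two families by completely different arguments. The basic structural fact, which I would establish first, is that $b$ occurs at most once in every word of $\L(\sigma_i)$. This is immediate from $A_i\setminus A_{i-1}=\{b\}$: every letter of $A_{i-1}$ has its image in ${A_{i-1}}^+$, and $\sigma(b)$ contains $b$ exactly once (since $v\in{A_{i-1}}^+$ and $u\in{A_{i-1}}^\ast$), so an induction gives that each $\sigma^n(c)$ contains $b$ at most once. Hence every $x\in X_{\sigma_i}$ either contains $b$ exactly once or lies in ${A_{i-1}}^\Z$.

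For the first family I would show that it coincides with $\Orb_{T_\sigma}(\omega)$. After a shift we may assume $x_0=b$. Since the single $b$ of each block $\sigma^N(b)$ sits at a fixed interior position whose two–sided neighbourhood, read off the configuration defining $\omega$, stabilises to $\omega$ (and grows without bound because $u\ne\Lambda$ and $v\ne\Lambda$), any window $x_{[-m,m]}$ centred at the $b$ must equal $\omega_{[-m,m]}$; letting $m\to\infty$ forces $x=\omega$. Thus the $b$-containing points are exactly $\Orb_{T_\sigma}(\omega)$, which is disjoint from the second family and supplies the $\Orb_{T_\sigma}(\omega)$ term in (i). This step is routine once uniqueness of the occurrence of $b$ is available.

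The substance of the proof is the second family $\big(X_{\sigma_i}\setminus X_{\sigma_{i-1}}\big)\cap{A_{i-1}}^\Z$, which I would treat by repeating, essentially verbatim, the argument of Proposition~\ref{increasing_sequence}. If $x$ is such a point then, since $x\notin X_{\sigma_{i-1}}$, some factor $x_{[-h,h)}$ lies outside $\L(X_{\sigma_{i-1}})$; as $x$ avoids $b$, such a factor is forced to come from a tail $\omega_{(-\infty,-1]}$ or $\omega_{[1,\infty)}$ of the quasi-fixed point. Approximating $x$ by factors of the deep blocks read off these tails, and using a pigeonhole/subsequence selection as in Proposition~\ref{increasing_sequence}, I would extract letters $\gamma,\delta\in A_{i-1}$ occurring as constant boundary letters together with a common period, and thereby realise a shift of $x$ as a $\sigma$-periodic point of the form $\lim_{j\to\infty}\sigma^{qj}(\gamma).\sigma^{qj}(\delta)$ (or a one–sided tail variant). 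Finiteness of $N$ then follows from finiteness of the set of admissible pairs $(\gamma,\delta)$, and for $T_\sigma$-aperiodic $x_j$ the identity $\overline{\Orb_{T_\sigma}(x_j)}=X_{\sigma_{i-1}}\cup\Orb_{T_\sigma}(x_j)$ is obtained exactly as before, because the distinguished factor occurs in $x_j$ only once while every word of $\L(X_{\sigma_{i-1}})$ recurs in $x_j$ with bounded gaps.

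It remains to pin down (ii), the $T_\sigma$-periodic case. A $T_\sigma$-periodic member of the second family is a $\sigma$-periodic, $T_\sigma$-periodic point of ${A_{i-1}}^\Z\setminus X_{\sigma_{i-1}}$, and I would argue, via Lemma~\ref{arbitrarily_long} applied at the bottom component, that such a point can only be the constant point $s^\infty$ with $A_1=\{s\}$. I expect the main obstacle to be this third step: transporting the bookkeeping of Proposition~\ref{increasing_sequence}—the nested selection of constant boundary letters and a common period—into the present setting, and verifying that the $T_\sigma$-aperiodic $\sigma$-periodic points together with the single degenerate point $s^\infty$ exhaust the second family. Step two, by comparison, is straightforward given the uniqueness of $b$.
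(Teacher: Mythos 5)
Your proposal is correct and takes essentially the same route as the paper: the paper's own proof likewise splits $X_{\sigma_i}\setminus X_{\sigma_{i-1}}$ according to whether a point contains a letter of $A_i\setminus A_{i-1}=\{b\}$ (in which case it is shown to lie in $\Orb_{T_\sigma}(\omega)$) or lies in ${A_{i-1}}^\Z$ (in which case the arguments of Proposition~\ref{increasing_sequence} are invoked verbatim). Your write-up simply supplies more detail than the paper, notably the induction showing that $b$ occurs at most once in every word of $\L(\sigma_i)$, which the paper leaves implicit.
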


\begin{proof}
Assume $x \in X_{\sigma_i} \setminus X_{\sigma_{i-1}}$. If $x_k \in A_i 
\setminus A_{i-1}$ for some $k \in \Z$, then $x = {T_\sigma}^k\omega$. 
If $x \in {A_{i-1}}^\Z$, then arguments in the proof of 
Proposition~\ref{increasing_sequence} work. 
\end{proof}

\begin{example}\label{ex2}
We shall see substitutions satisfying 
the hypothesis of Proposition~\ref{rec_trivial2}. 
\begin{enumerate}[(i)]
\item\label{unique_3}
Set $A=\{a,b,c,d\}$. Define $\sigma:A \to A^+$ by 
$a \mapsto abca$, $b \mapsto bacb$, $c \mapsto cbac$, $d \mapsto abadcac$. 
It follows that $\{aa,cc\} \subset \L(\sigma_2) \setminus \L(\sigma_1)$. 
Then,
\[
X_{\sigma_2} \setminus X_{\sigma_1} = \cup \Orb_{T_\sigma}(\omega) \cup 
\bigcup_{j=1}^2 \Orb_{T_\sigma}(x_j),
\]
where $x_1=\lim_{n \to \infty}\sigma^n(a).\sigma^n(a)$ and 
$x_2=\lim_{n \to \infty}\sigma^n(c).\sigma^n(c)$.
\item\label{unique_4}
Set $A=\{a,b,c,d\}$. Define $\sigma:A \to A^+$ by $a \mapsto ab, b \mapsto ab, 
c \mapsto acb, d \mapsto cdc$. Then, 
$X_{\sigma_1} = \Orb_{T_\sigma}(x)=\{x,T_\sigma x\}$, 
$X_{\sigma_2} \setminus X_{\sigma_1} = \Orb_{T_\sigma}(\omega)$ and 
$X_{\sigma_3} \setminus X_{\sigma_2} = \Orb_{T_\sigma}(\omega^\prime)$, 
where $x=(bc)^\infty.(bc)^\infty$, 
$\omega=\dots \sigma^2(a)\sigma(a)a .cb\sigma(b)\sigma^2(b)\dots$, and 
$\omega^\prime=\dots \sigma^2(c)\sigma(c)cdc\sigma(c)\sigma^2(c)\dots$. 
\item
Set $A=\{a,b,c,d,e\}$. Define $\sigma:A \to A^+$ by $a \mapsto a, b \mapsto 
cbab, c \mapsto cbc, d \mapsto adc, e \mapsto bdea$. 
Then, 
$X_{\sigma_1}=\emptyset$, $X_{\sigma_2}$ is minimal, 
\[
X_{\sigma_3} \setminus X_{\sigma_2}=\{a^\infty\} \cup 
\Orb_{T_\sigma}\left(\lim_{n \to \infty}\sigma^n(c).\sigma^n(c)\right) \cup 
\Orb_{T_\sigma}(\omega), 
\]
and $X_{\sigma_4} \setminus X_{\sigma_3}=\emptyset$, where 
$\omega=a^\infty.dc\sigma(c)\sigma^2(c)\sigma^3(c)\dots$. 
\end{enumerate}
\end{example}

We next consider the case $v \in {A_i}^+ \setminus {A_{i-1}}^+$.
\begin{definition}
\begin{enumerate}[(i)]
\item
Let $w \in {A_i}^+$. We refer to $w_{[m,n]} \in {A_{i-1}}^+$ as a 
{\em possible word} in $w$ if 
\[
w_{[m^\prime,n^\prime]} \in {A_{i-1}}^+, 1 \le m^\prime \le m, 
n \le n^\prime \le |u| \Rightarrow m^\prime = m, n^\prime = n.
\]
\item
Let $k^\prime \ge k \ge 1$ be integers and let $c \in A_i$. Suppose that 
$\sigma^k(c)_{[m,n]}$ (resp.\ $\sigma^{k^\prime}(c)_{[m^\prime, n^\prime]}$) 
is a possible word in $\sigma^k(c)$ (resp.\ $\sigma^{k^\prime}(c)$). 
We call $\sigma^k(c)_{[m,n]}$ an {\em ancestor} 
of $\sigma^{k^\prime}(c)_{[m^\prime, n^\prime]}$ if 
\[|\sigma^{k^\prime - k}(\sigma^k(c)_{[1,m)})| + 1 \ge m^\prime 
\textrm{ and } |\sigma^{k^\prime - k}(\sigma^k(c)_{[m,n]})| \le n^\prime.
\]
\end{enumerate}
\end{definition}

\begin{lemma}\label{sufficient_large_power}
Set 
$M = \max_{c, d \in A_i}\{|w|; w \textrm{ is a possible word in } \sigma(cd)\}$.
Let $p \in \N$. Suppose that $\sigma^k(c)_{[j,j+n)} \in {A_{i-1}}^+$ and 
$\sigma^k(c)_{j+n} \in A_i \setminus A_{i-1}$ for some 
$(c,k,j,n) \in A_i \setminus A_{i-1} \times 
\N \times \N \times \N$ with $1 \le j \le |\sigma^k(c)|$ and $n \ge (p + M)M$. Then, 
there exists $c^\prime \in A_i \setminus A_{i-1}$ such that
\[
\sigma^k(c)_{[j+n-n^\prime,j+n-n^\prime + |\sigma^p(c^\prime)|)}=\sigma^p(c^\prime),
\]
where $n^\prime = \max\{|w|;w \in {A_{i-1}}^+, w \textrm{ is a prefix of } 
\sigma^p(c^\prime)\}$. 
\end{lemma}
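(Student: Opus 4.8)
The plan is to recast the statement as a clean \emph{alignment} assertion and then attack that by induction on $p$. Working in the regime $k\ge p$, factor $\sigma^k(c)=\sigma^p(w)$ with $w=\sigma^{k-p}(c)\in A_i^+$, so that $\sigma^k(c)=\sigma^p(w_1)\sigma^p(w_2)\cdots$. First I would record the elementary dichotomy that, by part~\eqref{closed} of Definition~\ref{n_components} together with the recurrence condition, a letter $e\in A_i$ satisfies $\sigma^q(e)\in{A_{i-1}}^{+}$ if and only if $e\in A_{i-1}$ (if $\sigma(e)\in{A_{i-1}}^{+}$ then $\sigma^q(e)\in{A_{i-1}}^{+}$ for all $q$, contradicting that $e$ reoccurs in $\sigma^k(e)$). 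Hence the letter of $A_i\setminus A_{i-1}$ at position $j+n$ lies in a block $\sigma^p(w_s)$ with $w_s\in A_i\setminus A_{i-1}$, and I would take $c'=w_s$. With $\alpha_s$ the starting position of that block, the desired identity is then \emph{equivalent} to the single assertion that $j+n$ is the first position inside $\sigma^p(w_s)$ carrying a letter of $A_i\setminus A_{i-1}$: in that case positions $\alpha_s,\dots,j+n-1$ are precisely the $A_{i-1}$-prefix of $\sigma^p(c')$, so $n'=j+n-\alpha_s$ and $\sigma^p(c')$ occupies $[\,j+n-n',\,j+n-n'+|\sigma^p(c')|\,)$, as required.

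So the entire content is to exclude the ``failure'' in which some letter of $A_i\setminus A_{i-1}$ occurs in $\sigma^p(w_s)$ strictly left of position $j+n$. In that event the letters from the last such occurrence up to $j+n$ form a maximal $A_{i-1}$-run lying in the \emph{interior} of $\sigma^p(w_s)$, i.e.\ a possible word of $\sigma^p(w_s)$, whose length is at least $n\ge(p+M)M$. I would therefore try to prove by induction on $p$ that no interior possible word of $\sigma^p(e)$, $e\in A_i\setminus A_{i-1}$, can reach length $(p+M)M$. The base case $p=1$ is immediate from the definition of $M$, since any possible word of $\sigma(e)$ is a possible word of $\sigma(ee)$ and hence has length $\le M<(1+M)M$.

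For the inductive step I would peel one substitution from the inside, $\sigma^p(e)=\sigma^{p-1}(\sigma(e))$, and split an interior $A_{i-1}$-run of $\sigma^p(e)$ along the blocks $\sigma^{p-1}(\eta_t)$, where $\sigma(e)=\eta_1\eta_2\cdots$. By the dichotomy above every $\eta_t$ falling strictly inside the run lies in $A_{i-1}$, so those letters form a run of $\sigma(e)$ of length $\le M$, while the two extreme blocks contribute only an $A_{i-1}$-suffix and an $A_{i-1}$-prefix of single images $\sigma^{p-1}(\eta_t)$, which one controls by the interior bound at level $p-1$ together with an interface term bounded by $M$; assembling these gives the threshold $(p+M)M=pM+M^2$, with the additive $M$ the per-level interface and the factor $M$ the length of the middle run in $\sigma(e)$. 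I expect the genuine obstacle to live exactly here: an interior $A_{i-1}$-run a priori \emph{inflates} when $\sigma$ is applied (its length is multiplied, not merely increased by $M$), so the heart of the argument is to show that this inflation is entirely accounted for by the images of the middle $A_{i-1}$-letters of $\sigma(e)$ and cannot create, at the boundary, a new interior overhang exceeding $M$ per peeled level. It is precisely in pinning down this interface estimate that both clauses of Definition~\ref{n_components} and the choice of $M$ through $\sigma(cd)$ rather than $\sigma(c)$ are used, and I would expect most of the technical work to be the careful tracking of these boundary contributions across the $p$ successive peelings.
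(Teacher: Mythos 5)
Your reduction is a genuinely different setup from the paper's, and its first steps are fine: granting $k\ge p$ (note this is not among the hypotheses; in the paper's argument it appears as a \emph{conclusion}), you factor $\sigma^k(c)$ into level-$p$ blocks, use the dichotomy to see that the block containing position $j+n$ is $\sigma^p(w_s)$ with $w_s\in A_i\setminus A_{i-1}$, and correctly observe that the lemma holds with $c'=w_s$ unless $\sigma^p(w_s)$ contains an \emph{interior} possible word of length $\ge n\ge (p+M)M$. The fatal problem is the claim you then propose to prove by induction, namely that interior possible words of $\sigma^p(e)$, $e\in A_i\setminus A_{i-1}$, have length less than $(p+M)M$. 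That claim is false, so the inductive step you explicitly leave open cannot be filled by any amount of work. Take the paper's own example (i) following Proposition~\ref{non_meager}: $\sigma(a)=ab$, $\sigma(b)=a$, $\sigma(c)=acc$, with $A_1=\{a,b\}$ and $i=2$; one checks $M=4$ (realized in $\sigma(aa)=abab$). Every $\sigma^l(c)$ ends with $c$, and an easy induction gives
\[
\sigma^{p}(c)=W_{p}\,cc\,\sigma(c)\,\sigma^{2}(c)\cdots\sigma^{p-1}(c),
\qquad
W_{p}=\sigma^{p-1}(a)\,\sigma^{p-2}(a)\cdots\sigma(a)\,a,
\]
so that in $\sigma^{p+1}(c)=\sigma^{p}(a)\,\sigma^{p}(c)\,\sigma^{p}(c)$ the word $W_{p}$ is an interior possible word, flanked by $c$ on both sides. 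Its length $|W_{p}|=\sum_{l=0}^{p-1}|\sigma^{l}(a)|$ is a sum of Fibonacci numbers, hence grows exponentially in $p$; already $|W_{7}|=1+2+3+5+8+13+21=53>48=(8+M)M$, so $\sigma^{8}(c)$ violates your bound. The failure mechanism is exactly the inflation you flagged and hoped to absorb: the middle letters of $\sigma(e)$ inside a run number at most $M$, but after applying $\sigma^{p-1}$ they contribute length of order $M\max_{a\in A_{i-1}}|\sigma^{p-1}(a)|$, not $M$; and the two extreme blocks contribute \emph{boundary} prefix/suffix runs of $\sigma^{p-1}(\eta_t)$, which your inductive hypothesis about \emph{interior} runs does not control and which (viz.\ $W_{p-1}$ above) are themselves exponentially long. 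No bound linear in $p$ is available along this route.

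For comparison, the paper argues backwards rather than forwards: it extends the given run to the maximal possible word at level $k$, follows its chain of \emph{ancestors} down the levels, bounds the newborn ancestor's length and each per-level extension beyond the image of the previous ancestor by $M$ (both via the definition of $M$ through $\sigma(cd)$), deduces from $n\ge(p+M)M$ that the run was born at least $p$ levels below, i.e.\ $k-k'\ge p$, and then manufactures $c'$ from the letter of $A_i\setminus A_{i-1}$ flanking the newborn ancestor on the right. Be aware, though, that your inflation concern bites there as well: the run at level $l+1$ contains the \emph{image} of the run at level $l$, which is strictly longer than that run whenever letters of $A_{i-1}$ expand, so the paper's additive accounting, which bounds $n+j-j'$ by $M^2+(k-k')M$, requires exactly the care you anticipated; in the example above that chain of inequalities evaluates to $53\le 40$ for the run $W_7$ in $\sigma^8(c)$ (with birth level $k'=2$), so it cannot be taken at face value either. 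In short, you have correctly isolated the crux of the lemma, but your proposal does not resolve it, and the intermediate statement it rests on is refuted by the paper's own example: this is a genuine gap, not an alternative proof.
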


\begin{proof}
Let $j^\prime$ be such that $\sigma^k(c)_{[j^\prime,j+n)}$ is 
a possible word in $\sigma^k(c)$. There exists an integer $k^\prime$ with 
$1 \le k^\prime \le k$ such that $\sigma^k(c)_{[j^\prime,j+n)}$ does not have 
any ancestor in 
$\sigma^{k^\prime - 1}(c)$ but does in $\sigma^{k^\prime}(c)$. For each integer $l$ 
with $k^\prime \le l \le k$, let $\sigma^l(c)_{[j_l,j_l+n_l)}$ denote the ancestor of 
$\sigma^k(c)_{[j^\prime,j+n)}$. Since
\[
(p+M)M \le n + j - j^\prime \le n_{k^\prime}M + 
\sum_{l=k^\prime}^{k-1}(n_{l+1}-|\sigma(\sigma^l(c)_{[j_l,j_l+n_l)})|) 
\le M^2 + (k - k^\prime)M,
\]
we obtain $k-k^\prime \ge p$. The conclusion holds by taking $c^\prime$ to be 
the first letter of $A_i \setminus A_{i-1}$ to occur in 
$\sigma^{k - k^\prime -p}(\sigma^{k^\prime}(c)_{j_{k^\prime}+n_{k^\prime}})$.
\end{proof}

\begin{proposition}\label{non_meager}
There is a possibly empty set $\{x_j \in (X_{\sigma_i} \setminus 
X_{\sigma_{i-1}}) \cap {A_{i-1}}^\Z;1 \le j \le N\}$ of periodic points of 
$\sigma$ such that
\begin{enumerate}[{\rm (i)}]
\item\label{disjoint}
$\Orb_{T_\sigma}(x_j) \cap \Orb_{T_\sigma}(x_{j^\prime})=\emptyset$ if 
$j \ne j^\prime;$
\item
if $x_j$ is periodic under $T_\sigma$, then $A_1$ is a singleton, say 
$\{s\}$, and $x_j=s^\infty;$
\item\label{ape}
if $x_j$ is aperiodic under $T_\sigma$, then $\overline{\Orb_{T_\sigma}(x_j)}=
X_{\sigma_{i-1}} \cup \Orb_{T_\sigma}(x_j)$ $($a disjoint union$);$
\item
the orbit of any point in a $T_\sigma$\nobreakdash-invariant, locally compact 
$($non\nobreakdash-compact$)$ set$:$
\begin{equation}\label{X_i}
X_i:=X_{\sigma_i} \setminus \left(X_{\sigma_{i-1}} \cup 
\bigcup_{j=1}^N\Orb_{T_\sigma}(x_j)\right)
\end{equation}
is dense in $X_{\sigma_i}$, where we let $X_{\sigma_0}=\emptyset$.
\end{enumerate} 
\end{proposition}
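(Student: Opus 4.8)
The plan is to take the $x_j$ to be representatives of the $\sigma$-periodic orbits lying in $(X_{\sigma_i}\setminus X_{\sigma_{i-1}})\cap{A_{i-1}}^\Z$, i.e.\ the points of $X_{\sigma_i}\setminus X_{\sigma_{i-1}}$ carrying no letter of $A_i\setminus A_{i-1}$; then $X_i$ is exactly the set of points of $X_{\sigma_i}\setminus X_{\sigma_{i-1}}$ that do carry such a letter. The classification of the $x_j$ and properties \eqref{disjoint}--\eqref{ape} I would obtain by reusing the argument of Proposition~\ref{increasing_sequence}: any point of $X_{\sigma_i}\setminus X_{\sigma_{i-1}}$ in ${A_{i-1}}^\Z$ is a $T_\sigma$-shift of one of finitely many $\sigma$-periodic points of the limit forms exhibited there, finiteness of $N$ coming from $\sharp A<\infty$. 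Then \eqref{disjoint} is immediate, the only $T_\sigma$-periodic possibility is $s^\infty$ with $A_1=\{s\}$, and \eqref{ape} follows by sliding the single defect $\gamma\delta\notin\L(X_{\sigma_{i-1}})$ to infinity, the surviving limits being defect-free and so in $X_{\sigma_{i-1}}$.

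The topological assertions of \eqref{X_i} are then formal. Invariance is clear, since $X_{\sigma_i}$, $X_{\sigma_{i-1}}$ and $\bigcup_j\Orb_{T_\sigma}(x_j)$ are $T_\sigma$-invariant. By \eqref{ape} (and trivially in the periodic case) the removed set $X_{\sigma_{i-1}}\cup\bigcup_j\Orb_{T_\sigma}(x_j)$ is closed, so $X_i$ is open in the compact metric space $X_{\sigma_i}$ and hence locally compact; it is nonempty because $\omega\in X_i$, and once density is shown it is not closed, hence non-compact.

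For density I would reduce $\overline{\Orb_{T_\sigma}(x)}=X_{\sigma_i}$ to the identity $\L(x)=\L(\sigma_i)$, via $\overline{\Orb_{T_\sigma}(z)}=\{z'\in A^\Z:\L(z')\subseteq\L(z)\}$ together with $\L(\omega)=\L(\sigma_i)=\L(X_{\sigma_i})$ and $\overline{\Orb_{T_\sigma}(\omega)}=X_{\sigma_i}$. As $\L(x)\subseteq\L(\sigma_i)$ always holds, it suffices to show that $x$ contains $\sigma^p(c')$ for some $c'\in A_i\setminus A_{i-1}$ and arbitrarily large $p$: given $w\in\L(\sigma_i)$, write $w$ as a factor of $\sigma^K(c)$ with $c\in A_i$ and use the second defining condition of Definition~\ref{n_components} to choose $k_0$ with $c$ occurring in $\sigma^{k_0}(c')$, so that $\sigma^p(c')$ contains $\sigma^{p-k_0}(c)$, and hence $w$, once $p$ is large.

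Producing such blocks is where the work lies. I would first show $x$ carries infinitely many letters of $A_i\setminus A_{i-1}$. The second condition of Definition~\ref{n_components} makes each diagonal block $Q_i$ of \eqref{Q_i} primitive, and here $\sigma(b)=bv$ with $v\in{A_i}^+\setminus{A_{i-1}}^+$ forces the Perron eigenvalue of $Q_i$ to exceed $1$ (when $Q_i$ is $1\times1$ this is because $\sigma(b)=bv$ already contains $b$ twice); hence the number of letters of $A_i\setminus A_{i-1}$ in $\sigma^p(c')$ tends to infinity with $p$. Were there a rightmost occurrence in $x$ of such a letter, the infinite ${A_{i-1}}$-run to its right, run through the symmetric form of Lemma~\ref{sufficient_large_power}, would embed $\sigma^p(c')$ in $x$ with its last such letter at that occurrence for every large $p$, forcing arbitrarily many further letters of $A_i\setminus A_{i-1}$ to its left -- a contradiction; symmetrically there is no leftmost one. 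It then remains to split on the ${A_{i-1}}$-runs adjacent to these letters: if they are unbounded, Lemma~\ref{sufficient_large_power} directly extracts $\sigma^p(c')$ for arbitrarily large $p$; if they are bounded the occurrences are syndetic, and any long window of $x$, being a factor of some $\sigma^K(c)$ tiled by the blocks of $\sigma^K(c)=\prod_e\sigma^p(e)$, must contain one with $e\in A_i\setminus A_{i-1}$. The main obstacle is exactly this dichotomy: the ${A_{i-1}}$-runs genuinely grow, so no uniform gap bound is available, and it is the interplay of Lemma~\ref{sufficient_large_power} with the primitivity of $Q_i$ -- ruling out an isolated letter of $A_i\setminus A_{i-1}$ -- that makes the argument close.
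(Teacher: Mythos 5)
Your handling of the $x_j$ via Proposition~\ref{increasing_sequence}, the formal topological assertions, the reduction of density to showing that $x$ contains $\sigma^p(c')$ for arbitrarily large $p$, and your bounded-run case (syndeticity plus the tiling of $\sigma^K(c)$ into $p$-blocks, which is a nice alternative to the paper's argument there) are all fine. The genuine gap is the step you yourself flag as the crux: the claim that a point $x\in X_i$ has no rightmost and no leftmost occurrence of a letter of $A_i\setminus A_{i-1}$. This claim is \emph{false}, and your argument for it derives no contradiction: embedding $\sigma^p(c')$ with its last letter of $A_i\setminus A_{i-1}$ at the rightmost occurrence only produces letters of $A_i\setminus A_{i-1}$ \emph{to the left} of that occurrence, which is entirely compatible with its being rightmost. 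Concretely, the quasi-fixed point $\omega$ itself lies in $X_i$ and satisfies $\omega_{(-\infty,-1]}\in{A_{i-1}}^{-\N}$, so $\omega_0$ is a leftmost occurrence; and in the paper's own example $a\mapsto ab$, $b\mapsto a$, $c\mapsto acc$, the point $y=\lim_{j\to\infty}\sigma^j(c).\sigma^j(a)$ belongs to $X_2$ and has a rightmost occurrence of $c$ at the origin. Since your final dichotomy presupposes occurrences in both directions (syndeticity in the bounded case is exactly two-sidedness plus bounded runs), the proof as written does not cover such points.

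The repair stays within your own toolkit, because the points witnessing the failure are precisely those to which Lemma~\ref{sufficient_large_power} (or its mirror image) applies most directly: drop the false claim and organize the proof around the dichotomy alone. If the maximal $A_{i-1}$-runs of $x$ are unbounded in length (in particular if one of them is one-sided infinite), then each sufficiently long run is, by maximality and $x\notin{A_{i-1}}^\Z$, adjacent on at least one side to a letter of $A_i\setminus A_{i-1}$, and the lemma or its mirror extracts $\sigma^p(c')$ for every large $p$. If instead all runs are bounded, then occurrences of letters of $A_i\setminus A_{i-1}$ are automatically two-sided and syndetic (a rightmost or leftmost occurrence would create an infinite run), and your tiling argument closes that case. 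Note also how the paper avoids the case analysis altogether: by Lemma~\ref{pos_rec}~\eqref{BG}, runs of $n$ consecutive letters of $A_{i-1}$ occur in $\omega$, hence in every sufficiently long word of $\L(X_{\sigma_i})$, with bounded gaps; so next to any \emph{single} occurrence of a letter of $A_i\setminus A_{i-1}$ in $x'$ one finds a run of length $n$ immediately followed by such a letter, and one application of Lemma~\ref{sufficient_large_power} inside $x'_{[l,l+2n)}$ produces a block $\sigma^p(d)$, and hence $w$, inside $x'$. One letter suffices, and no dichotomy is needed.
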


\begin{proof}
Properties \eqref{disjoint}$\sim$\eqref{ape} are verified by the same argument 
as in the proof of Proposition~\ref{rec_trivial2}. 
Let $x^\prime \in X_i$. Let $w \in \L(X_{\sigma_i})$. Take $p \in \N$ 
so that $w$ is a factor of $\sigma^p(c)$ for all $c \in A_i \setminus A_{i-1}$. 
Fix $n \in \N$ with 
$n \ge \max\{ (p + M)M, |\sigma^p(c)|;c \in A_i \setminus A_{i-1}\}$,
where $M$ is as in Lemma~\ref{sufficient_large_power}. 
Lemma~\ref{pos_rec}~\eqref{BG} together with the fact that 
$\overline{\Orb_{T_\sigma}(\omega)}=X_{\sigma_i}$ enables us to find 
$l \in \Z$ such that $x^\prime_{[l,l+n)} \in {A_{i-1}}^+$ and 
$x^\prime_{l+n} \in A_i \setminus A_{i-1}$. Since $x^\prime_{[l,l+2n)}$ is a 
factor of $\sigma^k(c)$ for some $k \in \N$ and some 
$c \in A_i \setminus A_{i-1}$, 
Lemma~\ref{sufficient_large_power} ensures the existence of 
$d \in A_i \setminus A_{i-1}$ such that $\sigma^p(d)$ is a factor of 
$x^\prime_{[l,l+2n)}$. Hence $w$ is a factor of $x^\prime_{[l,l+2n)}$. This 
completes the proof.
\end{proof}

\begin{example}
The following substitutions satisfy the hypothesis of Proposition~\ref{non_meager}.
\begin{enumerate}[(i)]
\item
Set $A=\{a, b, c\}$. Define $\sigma:A \to A^+$ by $a \mapsto ab$, $b \mapsto a$, 
$c \mapsto acc$. Since $\sigma_1$ is primitive, $X_{\sigma_1}$ is minimal. The 
set $X_{\sigma_2} \setminus X_{\sigma_1}$ contains no periodic points of $\sigma$. 
\item
Set $A=\{a,b,c,d\}$. Define $\sigma:A \to A^+$ by $a \mapsto a$, $b \mapsto 
bbab$, $c \mapsto bcca$. Then, $X_{\sigma_1}=\emptyset$, $X_{\sigma_2}$ is 
minimal, and $X_{\sigma_3} \setminus X_{\sigma_2}$ contains $a^\infty$.
\end{enumerate}
\end{example}

Summarizing all the facts obtained above, we achieve the following.
\begin{theorem}\label{all_closed_inv_sets}
Let $\sigma:A \to A^+$ be a substitution of some primitive components. 
For each integer $i$ with $1 \le i \le n_\sigma$, we have a decomposition$:$
\[
X_{\sigma_i} \setminus X_{\sigma_{i-1}}=X_i \cup 
\Orb_{T_\sigma}(y_i)\cup \bigcup_{j=1}^{N_i}  
\Orb_{T_\sigma}(x_{ij})
\]
of $X_{\sigma_i} \setminus X_{\sigma_{i-1}}$ into possibly empty, locally 
compact, $T_\sigma$\nobreakdash-invariant sets 
$X_i$, $\Orb_{T_\sigma}(y_i)$ and $\Orb_{T_\sigma}(x_{ij})$ so that 
\begin{enumerate}[{\rm (i)}]
\item
$X_i$ is as in \eqref{X_i} if it is nonempty, and hence the orbit of any 
point in $X_i$ is dense in $X_{\sigma_i};$
\item
each $y_i$ is a quasi\nobreakdash-periodic point of a primitive type$;$
\item
each $x_{ij}$ is a periodic point of $\sigma$ such that if it is periodic under 
$T_\sigma$, then $A_1$ is a singleton, say $\{s\}$, and $x_{ij}=s^\infty;$ 
otherwise, $\overline{\Orb_{T_\sigma}(x_{ij})}=X_{\sigma_i} 
\cup \Orb_{T_\sigma}(x_{ij})$. 
\end{enumerate}
As a consequence, 
\begin{equation}\label{decomposition}
X_\sigma=\bigcup_{i=1}^{n_\sigma}X_i \cup
\bigcup_{i=2}^{n_\sigma}\Orb_{T_\sigma}(y_i) \cup 
\bigcup_{i=2}^{n_\sigma}\bigcup_{j=1}^{N_i}\Orb_{T_\sigma}(x_{ij}).
\end{equation}

The number of minimal sets of $X_\sigma$ is at most two. The two minimal sets 
are $X_{\sigma_2}$ and $\{s^\infty\}$, where $A_1=\{s\}$. 
The minimal set is unique if and only if one of the following holds$:$
\begin{enumerate}[{\rm (i)}]
\item\label{anyone_long}
$\lim_{n \to \infty}|\sigma^n(a)|=\infty$ for any $a \in A_1;$
\item\label{nonexistence}
$A_1$ is a singleton, say $\{s\}$, and $s^\infty \notin X_\sigma;$
\item\label{AP}
$A_1$ is a singleton and $\sigma_2$ is almost primitive.
\end{enumerate}
In these cases, the unique minimal set is $X_{\sigma_1}$, $X_{\sigma_2}$ and 
$\{s^\infty\}$, respectively, where $A_1=\{s\}$.

\end{theorem}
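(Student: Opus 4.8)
The plan is to assemble Theorem~\ref{all_closed_inv_sets} by running the case analysis of Section~\ref{periodic_points} uniformly over each index $i$ with $1 < i \le n_\sigma$, and then to read off the global decomposition and the minimal-set count as corollaries. First I would fix $i$ and invoke Lemma~\ref{basic_lemma}~\eqref{first_case}, which (after passing to a power, so $k=1$) produces the data $a,b,u,v$; the structure of $X_{\sigma_i}\setminus X_{\sigma_{i-1}}$ then splits according to whether $v=\Lambda$ or $v\ne\Lambda$, and within the latter according to whether $v\in{A_{i-1}}^+$ or $v\in{A_i}^+\setminus{A_{i-1}}^+$. The degenerate subcases $u=\Lambda$ and $u=a^{|u|}$ have already been disposed of (they force $i=2$ and land us in Proposition~\ref{Chacon_type} or in almost primitivity), so for the generic index the quasi-periodic point $\omega$ of a primitive type supplies the summand $\Orb_{T_\sigma}(y_i)$, while Proposition~\ref{rec_trivial2} and Proposition~\ref{non_meager} supply the finitely many periodic orbits $\Orb_{T_\sigma}(x_{ij})$ and the locally compact set $X_i$. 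I would remark explicitly that $X_i$ is non-compact whenever nonempty, since its closure picks up the strictly smaller $X_{\sigma_{i-1}}$, and that local compactness of each summand follows from its being the relative complement of a closed set in the compact space $X_{\sigma_i}$.

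The assertion that the orbit of any point of $X_i$ is dense in $X_{\sigma_i}$ is exactly the content of Proposition~\ref{non_meager}~(iv), so I would simply cite it; the density claim for the $v\in{A_{i-1}}^+$ branch is vacuous because $X_i=\emptyset$ there. The global formula~\eqref{decomposition} is then obtained by telescoping the per-level decompositions along the chain $\emptyset=X_{\sigma_0}\subset X_{\sigma_1}\subset\dots\subset X_{\sigma_{n_\sigma}}=X_\sigma$ and observing that the level-$1$ piece is $X_{\sigma_1}$ itself, which is either empty or the minimal uniquely ergodic subshift of the primitive substitution $\sigma_1$; thus $i=1$ contributes only $X_1:=X_{\sigma_1}$ and no $\Orb_{T_\sigma}(y_1)$ or $\Orb_{T_\sigma}(x_{1j})$ terms, which explains why the last two unions in~\eqref{decomposition} start at $i=2$.

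For the minimal-set count I would argue that any minimal subset $Y\subseteq X_\sigma$ is $T_\sigma$-closed and hence sits inside some $X_{\sigma_i}$; choosing $i$ minimal with $Y\subseteq X_{\sigma_i}$ forces $Y\cap(X_{\sigma_i}\setminus X_{\sigma_{i-1}})\ne\emptyset$, so $Y$ meets one of the summands from the level-$i$ decomposition. Since every orbit in $X_i$ is dense in $X_{\sigma_i}$, and since each $\overline{\Orb_{T_\sigma}(x_{ij})}$ and $\overline{\Orb_{T_\sigma}(y_i)}$ strictly contains the nonempty $X_{\sigma_{i-1}}$ (when $i\ge 2$ and $x_{ij}$ is $T_\sigma$-aperiodic), none of these summands can itself be minimal unless $X_{\sigma_{i-1}}=\emptyset$; the only way the latter happens is at $i=2$ with $X_{\sigma_1}=\emptyset$. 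Tracing this through, the candidate minimal sets reduce to $X_{\sigma_2}$ (minimal and uniquely ergodic by Proposition~\ref{Chacon_type} in the relevant branch) together with $\{s^\infty\}$ when $A_1=\{s\}$ and $s^\infty\in X_\sigma$; whence at most two minimal sets.

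The main obstacle I anticipate is the minimal-set enumeration rather than the decomposition itself: one must verify that \emph{no} summand other than $X_{\sigma_2}$ and $\{s^\infty\}$ can be minimal, which requires checking in every branch that the relevant orbit closure properly contains a nonempty lower-level subshift, and then matching the three mutually exclusive uniqueness criteria~\eqref{anyone_long}, \eqref{nonexistence}, \eqref{AP} to the three possible fates of the level-$1$ and level-$2$ data. Condition~\eqref{anyone_long} guarantees $X_{\sigma_1}$ is the single minimal set and that $s^\infty$ cannot split off; \eqref{nonexistence} removes the candidate $\{s^\infty\}$ outright; and \eqref{AP} collapses $X_{\sigma_2}$ onto the almost-minimal picture of~\cite{Y} in which $\{s^\infty\}$ is the unique minimal set. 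The care needed is entirely in confirming these three cases are exhaustive among the configurations that yield a single minimal set, and that they are mutually exclusive, so that the stated identification of the unique minimal set as $X_{\sigma_1}$, $X_{\sigma_2}$, or $\{s^\infty\}$ respectively is unambiguous.
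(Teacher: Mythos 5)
Your proposal is correct and follows essentially the same route as the paper: Theorem~\ref{all_closed_inv_sets} is presented there with no separate argument beyond ``summarizing all the facts obtained above,'' i.e.\ its proof is exactly the assembly you describe of Lemma~\ref{basic_lemma} (normalized to $k=1$), the case analysis on $u$ and $v$, Proposition~\ref{Chacon_type}, Proposition~\ref{increasing_sequence} and its corollary, Proposition~\ref{rec_trivial2}, and Proposition~\ref{non_meager}, followed by the telescoping over $i$ and the minimal-set bookkeeping you outline. One caveat: drop the side remark that $X_i$ is non-compact whenever nonempty, since it fails when $X_{\sigma_{i-1}}=\emptyset$ (e.g.\ $i=1$, or $i=2$ in the Chacon-type situation, where $X_2=X_{\sigma_2}$ is compact and minimal); the theorem asserts only local compactness, which your complement-of-a-closed-set argument already gives.
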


In Section~\ref{Vershik}, we will use the following characterization of 
shift\nobreakdash-periodic points in $X_\sigma$.

\section{Perron-Frobenius Theory for auxiliary substitutions}\label{PFTheory}

Let $\sigma:A \to A^+$ be a substitution of some primitive components. 
Let $i$ be an integer with $1 \le i \le n_\sigma$. Let $Q_1(i)$ be $Q_i$ in 
\eqref{Q_i}. Let $\theta_i$ denote a dominant eigenvalue of $Q_1(i)$. 
Given $m \in \N$, define a substitution 
$\sigma^{(m)}: \L_m(\sigma) \to \L_m(\sigma)^+$ by for $u \in \L_m(\sigma)$, 
\[
\sigma^{(m)}(u) = \sigma(u)_{[1,m]}, \sigma(u)_{[2,m+1]}, \sigma(u)_{[3,m+2]},
\dots , \sigma(u)_{[|\sigma(u_1)|,|\sigma(u_1)|+m-1]},
\]
where the commas between consecutive $\sigma(u)_{[i,m+i-1]}$'s are not new letters 
but just mean the separation between letters. Observe that there exists $k_0 \in 
\N$ such that any word in $\L(\sigma_i)$ occurs in $\sigma^k(a)$ for any $a \in 
A_i \setminus A_{i-1}$, any integer $i$ with $1 \le i \le n_\sigma$ and any 
integer $k \ge k_0$. 
Set ${\mathcal B}_m(i) = \{u \in \L_m(\sigma_{i+1});u_1 \in A_i\}$ 
for $0 \le i < n_\sigma$. Set $\lambda_i =\max_{1 \le j \le i} \theta_j$ and 
$\eta_i=\max_{i \le j \le n_\sigma}\theta_j$ for $1 \le i \le n_\sigma$, 
and $\lambda=\lambda_{n_\sigma}$.

We devote this section to analyzing how fast entries of ${M_{\sigma^{(m)}}}^k$ 
increase as $k$ tends to infinity. 
\begin{lemma}\label{start_for_PF}
With possibly empty matrices $F_{m,k}(i)$, $G_m(i)$, $Q_m(i)$ and $R_{m,k}(i)$, 
we may write that for every $k \in \N$,
\[
{M_{\sigma^{(m)}}}^k=\left[
\begin{array}{cccccccc}
{Q_{m}(1)}^k & 0 & 0 & 0 & 0 & \cdots & 0 & 0 \\
F_{m,k}(1) & {G_m(1)}^k & 0 & 0 & 0 & \cdots & 0 & 0 \\
\multicolumn{2}{c}{R_{m,k}(1)} & {Q_m(2)}^k & 0 & 0 & \cdots & 0 & 0 \\
\multicolumn{3}{c}{F_{m,k}(2)} & {G_m(2)}^k & 0 & \cdots & 0 & 0 \\
\multicolumn{4}{c}{R_{m,k}(2)} & {Q_m(3)}^k & \cdots & 0 & 0 \\
\multicolumn{5}{c}{\vdots} & \ddots & \vdots & \vdots \\
\multicolumn{6}{c}{F_{m,k}(n_\sigma-1)} & {G_m(n_\sigma-1)}^k & 0 \\
\multicolumn{7}{c}{R_{m,k}(n_\sigma-1)} & {Q_m(n_\sigma)}^k
\end{array}
\right],
\]
where 
\begin{itemize}
\item
$Q_m(i)$ is an $\L_m(\sigma_i) \setminus {\mathcal B}_m(i-1) \times 
\L_m(\sigma_i) \setminus {\mathcal B}_m(i-1)$ matrix$;$
\item
$G_m(i)$ is a 
${\mathcal B}_m(i) \setminus \L_m(\sigma_i) \times 
{\mathcal B}_m(i) \setminus \L_m(\sigma_i)$ matrix$;$
\item
$F_{m,k}(i)$ is a ${\mathcal B}_m(i) \setminus \L_m(\sigma_i) \times 
\L_m(\sigma_i)$ matrix$;$
\item
$R_{m,k}(i)$ is an $\L_m(\sigma_{i+1}) \setminus {\mathcal B}_m(i) \times 
{\mathcal B}_m(i)$ matrix.
\end{itemize}
Then, 
\begin{enumerate}[{\rm (i)}]
\item\label{when_1}
the following are equivalent$:$
\begin{enumerate}[{\rm (a)}]
\item
$\theta_i=1;$
\item
$Q_1(i)=\begin{bmatrix} 1 \end{bmatrix};$
\item
\[
X_{\sigma_i} \setminus X_{\sigma_{i-1}}=\Orb_{T_\sigma}(y) \cup \bigcup_{j=1}^N
\Orb_{T_\sigma}(x_j) \textrm{ $($a disjoint union$)$}
\]
for a quasi\nobreakdash-periodic point 
$y \in X_{\sigma_i} \setminus X_{\sigma_{i-1}}$ of a primitive type 
and for some periodic points $x_1,x_2,\dots,x_N \in X_{\sigma_i} 
\setminus X_{\sigma_{i-1}}$ of $\sigma$, some of which are possibly 
nonexistent$;$
\end{enumerate}
\item\label{when_empty}
the following are equivalent$:$
\begin{enumerate}[{\rm (a)}]
\item
$\L_m(\sigma_i) \setminus {\mathcal B}_m(i-1) = \emptyset;$
\item
$m > 1$, $A_i \setminus A_{i-1}$ is a singleton, say $\{s\}$, and 
$\sigma(s) = us$ for some $u \in {A_{i-1}}^+;$
\end{enumerate}
\item\label{Qsame}
$Q_m(i)$ is a primitive matrix with a dominant eigenvalue $\theta_i$, 
if $Q_m(i)$ is nonempty$;$
\item\label{1ika}
the absolute value of no eigenvalue of $G_m(i)$ is greater than one$;$
\item\label{below_bound}
there exists a set $\{c_i>0;1 \le i \le n_\sigma\}$ 
such that if $u \in \L_m(\sigma) \setminus \B_m(n_\sigma-1)$ and 
$v \in \L_m(\sigma_i) \setminus \L_m(\sigma_{i-1})$, then 
$({M_{\sigma^{(m)}}}^k)_{u,v} \ge c_i {\eta_i}^k$ for all sufficiently 
large $k \in \N$. 
\end{enumerate}
\end{lemma}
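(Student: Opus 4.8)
The plan is to first read off the combinatorics encoded by the block partition, and then dispatch the five assertions in order; the genuinely new work is the Perron--Frobenius bookkeeping in (iii)--(v). To each $m$-word $u \in \L_m(\sigma)$ I would attach its \emph{level} $\ell(u)$, the least $i$ with $u \in \L_m(\sigma_i)$, together with the bit recording whether $u_1 \in A_{\ell(u)} \setminus A_{\ell(u)-1}$ or $u_1 \in A_{\ell(u)-1}$. A direct check shows these two data sort every $m$-word into exactly one index set: level $i$ with top first letter gives $\L_m(\sigma_i) \setminus \B_m(i-1)$ (the domain of $Q_m(i)$), while level $i+1$ with low first letter gives $\B_m(i) \setminus \L_m(\sigma_i)$ (the domain of $G_m(i)$). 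Ordering the blocks by $2\ell(u)-2$ plus the bit makes $M_{\sigma^{(m)}}$ block lower triangular: a window $v$ of $\sigma(u)$ starts inside $\sigma(u_1)$, so $v_1$ lies in the same primitive component as $u_1$; since $\sigma$ preserves each $A_j^+$ one gets $\ell(v) \le \ell(u)$, with equality forcing that if $v_1$ is top-level then so is $u_1$. The stated shape of ${M_{\sigma^{(m)}}}^k$ is then automatic, because powers of a block lower triangular matrix keep that shape with diagonal blocks the powers of $Q_m(i)$ and $G_m(i)$.

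For (i), $Q_1(i)=Q_i$ is a primitive nonnegative integer matrix with Perron value $\theta_i$, and any such matrix of size $\ge 2$ has a strictly positive power with row sums $\ge 2$, whence $\theta_i>1$; so $\theta_i=1$ forces $Q_1(i)=[1]$, giving (a)$\Leftrightarrow$(b). Now $Q_i=[1]$ says $A_i\setminus A_{i-1}=\{b\}$ with $b$ occurring exactly once in $\sigma(b)$, hence once in every $\sigma^n(b)$; this is precisely the primitive-type situation of Proposition~\ref{rec_trivial2}, i.e.\ the case $X_i=\emptyset$ in the decomposition of Theorem~\ref{all_closed_inv_sets}, which is (c). Conversely (c) exhibits a single top-level letter occurring once, forcing $Q_i=[1]$, so (b)$\Leftrightarrow$(c). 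For (ii) I would note that $\L_m(\sigma_i)\setminus\B_m(i-1)$ is the set of length-$m$ factors beginning with a top-level letter; for $m=1$ it equals $A_i\setminus A_{i-1}\ne\emptyset$, so emptiness forces $m>1$, and for $m>1$ emptiness means no top-level letter is ever followed in $\L(\sigma_i)$ by a further letter. Then each $\sigma(b)$, $b\in A_i\setminus A_{i-1}$, carries at most one top-level letter (necessarily at its right end), so the top-level incidence is functional; primitivity of $Q_i$ forces it to be $1\times1$, giving $A_i\setminus A_{i-1}=\{s\}$ and $\sigma(s)=us$, and Definition~\ref{n_components}(ii) rules out $u=\Lambda$ when $A_{i-1}\ne\emptyset$, leaving $u\in{A_{i-1}}^+$.

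For (iii) I would use the identity $(\sigma^{(m)})^k=(\sigma^k)^{(m)}$ of \cite{Qu} together with block triangularity to read $(Q_m(i)^k)_{u,v}$ as the number of length-$m$ windows of $\sigma^k(u)$ equal to $v$ that begin at a top-level position inside $\sigma^k(u_1)$. Primitivity of $Q_m(i)$ follows from the $k_0$ with every word of $\L(\sigma_i)$ occurring in $\sigma^k(a)$ for all top-level $a$ and $k\ge k_0$: each $v$ then occurs inside $\sigma^k(u_1)$ starting with its (top-level) first letter. For the eigenvalue I would sandwich this count. The number of top-level positions in $\sigma^k(u_1)$ is $\sum_b (Q_i^k)_{u_1,b}\asymp\theta_i^k$ by Perron--Frobenius for the primitive $Q_i$, giving $(Q_m(i)^k)_{u,v}\le C\theta_i^k$; for the reverse bound I would plant one top-level copy of $v$ inside $\sigma^{k_0}(c)$ for each of the $\asymp\theta_i^{k-k_0}$ top-level letters $c$ of $\sigma^{k-k_0}(u_1)$, yielding $(Q_m(i)^k)_{u,v}\ge c\,\theta_i^k$. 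Hence the Perron value of $Q_m(i)$ is $\theta_i$ (the degenerate case $\theta_i=1$ with $Q_m(i)$ nonempty is immediate, since the lone top-level letter occurs once and $Q_m(i)=[1]$). Part (iv) is the cleanest: for $u\in G_m(i)$ the first letter lies in $A_i$ while the first level-$(i+1)$ letter sits at position $\ge2$, so in $\sigma^k(u)$ the first level-$(i+1)$ position $q$ exceeds $|\sigma^k(u_1)|$; an in-block window must begin at some $p\le|\sigma^k(u_1)|$ yet reach $q$, i.e.\ $p\ge q-m+1$, leaving at most $m-1$ admissible $p$. Thus every row sum of $G_m(i)^k$ is $\le m-1$ for all $k$, so the spectral radius of $G_m(i)$ is at most $\lim_k(m-1)^{1/k}=1$.

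Finally, in (v) the index $u$ ranges over the top block ($u_1\in A_{n_\sigma}\setminus A_{n_\sigma-1}$) and $v$ has level $i$. Writing $\eta_i=\theta_{j^*}$ with $i\le j^*\le n_\sigma$, I would invoke the power $M_\sigma^{k_1}$ all of whose on-or-below-diagonal entries are positive (the remark after \eqref{Q_i}) to route $u_1$ to a fixed level-$j^*$ letter $d$ and then remain inside component $j^*$: $(M_\sigma^k)_{u_1,c}\ge (M_\sigma^{k_1})_{u_1,d}\,(Q_{j^*}^{k-k_1})_{d,c}\ge c'\theta_{j^*}^{\,k}$ for a suitable level-$j^*$ letter $c$, so $\sigma^k(u_1)$ carries at least $\asymp\eta_i^{\,k}$ copies of $c$. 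Since $j^*\ge i$ and $v\in\L_m(\sigma_i)\subseteq\L(\sigma_{j^*})$, each $\sigma^{k_0}(c)$-block contains $v$, planting $\asymp\eta_i^{\,k}$ distinct windows equal to $v$ and beginning inside $\sigma^k(u_1)$; taking $c_i$ to be the least of the resulting constants over the finitely many pairs $(u,v)$ gives $({M_{\sigma^{(m)}}}^k)_{u,v}\ge c_i\eta_i^{\,k}$. I expect the main obstacle to be (iii): matching the Perron value of $Q_m(i)$ with $\theta_i$ rests on counting only the top-level-anchored windows and showing they grow at exactly the rate of the primitive diagonal block $Q_i$, so the auxiliary-substitution identity and the Perron asymptotics of $Q_i$ must be combined with care; verifying that the planted copies of $v$ neither straddle the $\sigma^k(u_1)$-boundary nor coincide is the delicate but routine part.
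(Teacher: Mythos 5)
Your block decomposition and parts (i)--(iii), (v) are essentially sound, but part (iv) contains a genuine gap. You argue that for $u,v$ in the $G_m(i)$-block, any window of $\sigma^k(u)$ equal to $v$ and starting inside $\sigma^k(u_1)$ must ``reach'' the first position $q$ carrying a letter of $A_{i+1}\setminus A_i$. This presumes that membership in $\B_m(i)\setminus\L_m(\sigma_i)$ entails containing a letter of $A_{i+1}\setminus A_i$ --- but it does not. That index set also contains words written entirely over $A_i$ which simply fail to be factors of the sub-language $\L(\sigma_i)$: in Example~\ref{ex1}~\eqref{unique_2} (that is, $a\mapsto abca$, $b\mapsto bacb$, $c\mapsto cbac$, $d\mapsto abbcad$), the word $bb$ lies in $\B_2(1)\setminus\L_2(\sigma_1)$ although $b\in A_1$; note that your own opening paragraph correctly defines the $G$-block by the \emph{level of the word}, not of its letters, and (iv) silently conflates the two. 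For $u=v=bb$ your argument collapses twice over: no level-$2$ position $q$ exists in $\sigma^k(bb)=\sigma^k(b)\sigma^k(b)$ at all, and the entry $(G_2(1)^k)_{bb,bb}$ is in fact positive for every $k$ (since $\sigma^k(b)$ both begins and ends with $b$, the straddling window at position $|\sigma^k(b)|$ equals $bb$) via a window containing no letter of $A_2\setminus A_1$, so the constraint ``the window must reach $q$'' is simply unavailable. These straddling-only words over $A_i$ are not a pathology; they are exactly the words responsible for the $\sigma$-periodic points $x_j$ in Section~\ref{periodic_points}.

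The repair is short, and it is what the paper does: if $(G_m(i)^k)_{u,v}>0$, then some window $\sigma^k(u)_{[j,j+m)}$ with $j\le|\sigma^k(u_1)|$ equals $v$; if $j\le|\sigma^k(u_1)|-m+1$, this window lies entirely inside $\sigma^k(u_1)\in{A_i}^+$, so $v$ would be a factor of $\sigma^k(u_1)$ and hence $v\in\L_m(\sigma_i)$, contradicting $v\in\B_m(i)\setminus\L_m(\sigma_i)$. Thus only the $m-1$ positions $|\sigma^k(u_1)|-m+2\le j\le|\sigma^k(u_1)|$ can carry such windows, every row sum of $G_m(i)^k$ is at most $m-1$ for every $k$, and the spectral radius is at most $1$, as you concluded. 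For the record on the remaining parts: your (i) and (ii) are at the level of detail of the paper (which leaves them as ``straightforward''); your (v) is the same routing idea the paper implements through matrix inequalities; and your (iii) is correct but genuinely different --- you pin the Perron value of $Q_m(i)$ by a two-sided growth estimate $c\,{\theta_i}^k\le(Q_m(i)^k)_{u,v}\le C\,{\theta_i}^k$, whereas the paper uses the intertwining relation $Q_m(i)N=NQ_1(i)$ with $N_{u,a}=(M_\sigma)_{u_1,a}$, which transports a positive $\theta_i$-eigenvector of $Q_1(i)$ to one of $Q_m(i)$ in one line and avoids the boundary and coincidence bookkeeping you yourself flag as delicate.
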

\begin{proof}
Assume that the size of $Q_1(i)$ is greater than one. Then, for a sufficiently 
large $k \in \N$,
\[
{\theta_i}^k={v_i}^{-1}\sum_j (Q_1(i)^k)_{ij}v_j>1, 
\]
where $v$ is a positive, right eigenvector corresponding to $\theta_i$. 
Hence, $\theta_i = 1$ implies $Q_1(i)=\begin{bmatrix} 1 
\end{bmatrix}$. 
The other parts in \eqref{when_1} is straightforward by using facts 
from Section~\ref{periodic_points}. Similarly, 
Statement~\eqref{when_empty} is readily verified. 

Assuming $Q_m(i)$ is nonempty, choose $k_0 \in \N$ so that any word in 
$\L_m(\sigma_i)$ occurs in $\sigma^k(a)$ for any integer $k \ge k_0$ and any 
$a \in 
A_i \setminus A_{i-1}$. This implies ${Q_m(i)}^k>0$ if $k \ge k_0$.
Define a $\L_m(\sigma_i) \setminus \B_m(i-1) \times A_i \setminus A_{i-1}$ 
matrix $N$ by $N_{u,a}=(M_\sigma)_{u_1,a}$ for each $(u,a)$. 
Then, $Q_m(i)N=NQ_1(i)$ because their $(u,a)$\nobreakdash-entries are 
$N(a, \sigma^2(u_1))$. If $Q_1(i)v=\theta_i v$ and $v>0$, then 
$Q_m(i)Nv=\theta_i Nv$ and $Nv>0$. This implies \eqref{Qsame}. 

If $({G_m(i)}^k)_{u,v}>0$, then $v \in 
\{\sigma^k(u)_{[j,j+m)};|\sigma^k(u_1)|-m+2 \le j \le |\sigma^k(u_1)|\}$. 
Hence, for any $k \in \N$, each row sum of ${G_m(i)}^k$ is not greater than 
$m-1$, which shows \eqref{1ika}. 

In the remainder of this proof, let us show \eqref{below_bound}. 
Take $k_0 \in \N$ so that $R_{m,k}(i)>0$ for any integer $k \ge k_0$ and 
any integer $i$ with $1 \le i < n_\sigma$. Put $M_i=M_{{\sigma_i}^{(m)}}$ for 
$1 \le i \le n_\sigma$. Put 
\[
M_i=
\begin{bmatrix}
M_{i-1} & 0 & 0 \\
F & G & 0 \\
R & R^\prime & Q
\end{bmatrix},
\]
where $G=G_m(i-1)$ and $Q=Q_m(i)$. Define $F^\prime, R_{k_0}, R_{k_0}^\prime$ 
by 
\[
{M_i}^{k_0}=\begin{bmatrix}
{M_{i-1}}^{k_0} & 0 & 0 \\
F^\prime & G^{k_0} & 0 \\
R_{k_0} & R_{k_0}^\prime & Q^{k_0}
\end{bmatrix}.
\]
Reducing ${M_i}^{k_0+k}={M_i}^{k_0}{M_i}^k={M_i}^k{M_i}^{k_0}$, we obtain that 
for every integer $k \in \N$,
\[
{M_i}^{k_0+k} \ge \begin{bmatrix}
{M_{i-1}}^{k_0+k} & 0 & 0 \\
0 & G^{k_0+k} & 0 \\
Q^kR_{k_0} & Q^kR_{k_0}^\prime & Q^{k_0+k}
\end{bmatrix}
\textrm{ and }
{M_i}^{k_0+k} \ge 
\begin{bmatrix}
{M_{i-1}}^{k_0+k} & 0 & 0 \\
0 & G^{k_0+k} & 0 \\
R_{k_0}{M_{i-1}}^k & {R_{k_0}}^\prime G & Q^{k_0+k}
\end{bmatrix}.
\]
This shows the conclusion in the case $i=n_\sigma$. Applying 
this argument to $M_{i-2}$ instead of $M_i$, we obtain the conclusion in the 
case $i=n_\sigma-1$. Repeating the argument, we may obtain \eqref{below_bound}.
\end{proof}

\begin{lemma}\label{eigen_vectors}
Set $i_{\min} =\min_{\theta_i=\lambda}i$ and 
$i_{\max} =\max_{\theta_i=\lambda}i$. Suppose $\lambda>1$. Then,
\begin{enumerate}[{\rm (i)}]
\item\label{r_e_vec}
a right eigenvector $\alpha=(\alpha_u)_{u \in \L_m(\sigma)}$ of 
$M_{\sigma^{(m)}}$ corresponding to $\lambda$ may be chosen so that 
$(\alpha_u)_{u \in {\mathcal B}_m(i_{\max}-1)}=0$ and 
$(\alpha_u)_{u \in \L_m(\sigma) \setminus {\mathcal B}_m(i_{\max}-1)}>0;$
\item\label{l_e_vec}
a left eigenvector $\beta=(\beta_u)_{u \in \L_m(\sigma)}$ of 
$M_{\sigma^{(m)}}$ corresponding to 
$\lambda$ may be chosen so that 
$(\beta_u)_{u \in \L_m(\sigma_{i_{\min}})}>0$ and 
$(\beta_u)_{u \in \L_m(\sigma) \setminus \L_m(\sigma_{i_{\min}})}=0;$
\item\label{geom_simple}
$\lambda$ is a simple, dominant eigenvalue of $M_{\sigma^{(m)}}$.
\end{enumerate}
\end{lemma}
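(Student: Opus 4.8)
The plan is to exploit the block lower triangular form of $M:=M_{\sigma^{(m)}}$ recorded in Lemma~\ref{start_for_PF}, whose diagonal blocks, read from top to bottom, are $Q_m(1),G_m(1),Q_m(2),G_m(2),\dots,G_m(n_\sigma-1),Q_m(n_\sigma)$. By Lemma~\ref{start_for_PF}\eqref{Qsame} each $Q_m(i)$ is primitive with dominant eigenvalue $\theta_i\le\lambda$, while by \eqref{1ika} every eigenvalue of every $G_m(i)$ has modulus at most $1<\lambda$. Since the spectrum of a block triangular matrix is the union of the spectra of its diagonal blocks, $\lambda=\max_i\theta_i$ is the spectral radius of $M$ and every eigenvalue $\eta\ne\lambda$ satisfies $|\eta|<\lambda$; this already yields the dominance asserted in \eqref{geom_simple}. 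I will regard $\L_m(\sigma)$ as the ordered disjoint union of these blocks, and use repeatedly that, in this ordering, the blocks strictly below $Q_m(i_{\max})$ are the $Q_m(i)$ with $i>i_{\max}$ together with the intervening $G_m$ blocks, all of spectral radius $<\lambda$ by maximality of $i_{\max}$, while symmetrically the blocks strictly above $Q_m(i_{\min})$ all have $\theta_i<\lambda$ by minimality of $i_{\min}$.

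For \eqref{r_e_vec} I build $\alpha$ by downward propagation. The pieces preceding the $Q_m(i_{\max})$ block amount exactly to $\B_m(i_{\max}-1)$ (using $\L_m(\sigma_{i_{\max}-1})\subset\B_m(i_{\max}-1)$), so I set $\alpha_u=0$ for $u\in\B_m(i_{\max}-1)$ and let $\alpha$ equal the positive right Perron vector of $Q_m(i_{\max})$ on that block. The block equation there reads $(\lambda I-Q_m(i_{\max}))\alpha=0$ and holds, while each block equation inside $\B_m(i_{\max}-1)$ reduces to $(\lambda I-M_{jj})\alpha_j=0$, for which $\alpha_j=0$ is a legitimate solution even on the intermediate blocks with $\lambda\in\Sp(M_{jj})$. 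For every block $j$ strictly below $Q_m(i_{\max})$ one has $\lambda\notin\Sp(M_{jj})$, so $\alpha_j=(\lambda I-M_{jj})^{-1}\sum_{l<j}M_{jl}\alpha_l$ is forced and well defined; nonnegativity follows from $(\lambda I-M_{jj})^{-1}=\sum_{t\ge 0}\lambda^{-t-1}{M_{jj}}^t\ge 0$, and strict positivity from the fact that the below\nobreakdash-diagonal blocks of $M^k$ are strictly positive once $k$ is large (as in the proof of Lemma~\ref{start_for_PF}), whence $\lambda^k\alpha_u=(M^k\alpha)_u\ge(M^k)_{u,v}\alpha_v>0$ for $u$ below $Q_m(i_{\max})$ and $v$ in that block. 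Part \eqref{l_e_vec} is proved symmetrically: $\beta$ is supported on $\L_m(\sigma_{i_{\min}})$, equals the positive left Perron vector of $Q_m(i_{\min})$ there, and is propagated \emph{upward}, the propagation being clean precisely because every block above $Q_m(i_{\min})$ has $\theta_i<\lambda$.

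The geometric simplicity in \eqref{geom_simple} is the main obstacle, because several $Q_m(i)$ with $i_{\min}\le i\le i_{\max}$ may share the eigenvalue $\lambda$, so the naive triangular count gives algebraic multiplicity $\#\{i:\theta_i=\lambda\}>1$; one must show the coupling fuses these into a single Jordan block, leaving a one\nobreakdash-dimensional eigenspace. Let $\alpha\in\ker(\lambda I-M)$ be nonzero and let $Q_m(i^{\ast})$ be the topmost block on which $\alpha$ does not vanish; the block equation there forces $\lambda\in\Sp(M_{i^{\ast}i^{\ast}})$, so $\theta_{i^{\ast}}=\lambda$ and $\alpha$ on that block is a scalar multiple of the positive Perron vector, strictly positive after scaling. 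Suppose $i^{\ast}<i_{\max}$ and let $i^{\ast\ast}$ be the next index below $i^{\ast}$ with $\theta_{i^{\ast\ast}}=\lambda$. Writing $w>0$ for the left Perron vector of $Q_m(i^{\ast\ast})$, contracting the $Q_m(i^{\ast\ast})$\nobreakdash-block equation of $M^k\alpha=\lambda^k\alpha$ with $w$ and cancelling the diagonal term ${Q_m(i^{\ast\ast})}^k$ gives, for all large $k$,
\[
\sum_{l} w\cdot (M^k)_{Q_m(i^{\ast\ast}),\,l}\,\alpha_l=0,
\]
the sum ranging over blocks $l$ strictly above $Q_m(i^{\ast\ast})$. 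Each $l$ strictly between $Q_m(i^{\ast})$ and $Q_m(i^{\ast\ast})$ has spectral radius below $\lambda$, contributing $O(\rho^k)$ with $\rho<\lambda$; the term $l=Q_m(i^{\ast})$, however, has $(M^k)_{Q_m(i^{\ast\ast}),\,Q_m(i^{\ast})}$ strictly positive for large $k$ and $\alpha_{Q_m(i^{\ast})}>0$, so it dominates and keeps the sum strictly positive against $w>0$, a contradiction. Hence $i^{\ast}=i_{\max}$, so $\alpha$ vanishes on $\B_m(i_{\max}-1)$ and is thereafter determined up to the scalar chosen on $Q_m(i_{\max})$, proving the eigenspace is one dimensional. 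Finally $\dim\ker(\lambda I-M)=\dim\ker(\lambda I-M^{\top})$ shows the left eigenvector is likewise unique up to scaling, completing \eqref{l_e_vec} and \eqref{geom_simple}.
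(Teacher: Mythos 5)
Your dominance argument, the downward resolvent construction of $\alpha$, and the nonnegativity via the Neumann series for $(\lambda I-M_{jj})^{-1}$ are all sound, and part \eqref{l_e_vec} is indeed unproblematic (there the positivity one needs is of entries $({M_{\sigma^{(m)}}}^k)_{u,v}$ with $u$ in the $Q_m(i_{\min})$ block, which does hold). The proof breaks at the strict-positivity step of \eqref{r_e_vec}. The claim that \emph{all} below-diagonal blocks of ${M_{\sigma^{(m)}}}^k$ become strictly positive is false, and it is not what the proof of Lemma~\ref{start_for_PF} establishes: there only the blocks $R_{m,k}(i)$ are asserted positive, and their rows are indexed by $\L_m(\sigma_{i+1})\setminus\B_m(i)$, i.e.\ by words whose first letter lies in $A_{i+1}\setminus A_i$. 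The obstruction is structural: every letter of $\sigma^{(m)k}(u)$ is a word $\sigma^k(u)_{[p,p+m)}$ with $p\le|\sigma^k(u_1)|$, hence begins with a letter of $\sigma^k(u_1)$; so if $u_1\in A_j$ then $({M_{\sigma^{(m)}}}^k)_{u,v}=0$ for \emph{every} $k$ whenever $v_1\notin A_j$. For a row $u\in\B_m(i)\setminus\L_m(\sigma_i)$ (a $G$\nobreakdash-block row) with $i\ge i_{\max}$ whose first letter lies in a low component, your inequality $\lambda^k\alpha_u\ge({M_{\sigma^{(m)}}}^k)_{u,v}\alpha_v$ with $v$ in the $Q_m(i_{\max})$ block therefore reads $0\ge0$. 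Worse, no argument can close this gap, because the asserted positivity genuinely fails there: take $\sigma\colon a\mapsto aa$, $b\mapsto ab^3$, $c\mapsto acbc$ (of three primitive components, $\theta_1=2$, $\theta_2=3=\lambda$, $\theta_3=2$, so $i_{\max}=2$) and $m=2$. Then $ac\in\L_2(\sigma)\setminus\B_2(1)$, but $\sigma(ac)=aaacbc$ and $|\sigma(a)|=2$ give $\sigma^{(2)}(ac)=(aa)(aa)$, so any $\lambda$\nobreakdash-eigenvector satisfies $3\alpha_{ac}=2\alpha_{aa}$, while the row of $aa$ forces $\alpha_{aa}=0$; hence $\alpha_{ac}=0$ is forced. (The paper's own proof is exposed at exactly this point: the positivity asserted in \eqref{contradiction1} requires the rows of $F_{m,k}(i)\xi'$ indexed by such $u$ to be nonzero, which fails in this example. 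So what you have hit is a defect of the statement itself, not merely of your route to it.)

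There is a second, independent gap in your proof of \eqref{geom_simple}. For blocks $l$ strictly between $Q_m(i^{\ast})$ and $Q_m(i^{\ast\ast})$ you estimate $w\,({M_{\sigma^{(m)}}}^k)_{Q_m(i^{\ast\ast}),\,l}\,\alpha_l=O(\rho^k)$ with $\rho<\lambda$. That estimate is wrong: a path may spend almost all of its time inside the block $Q_m(i^{\ast\ast})$, whose spectral radius is $\lambda$, before dropping once into $l$; for instance $({M_{\sigma^{(m)}}}^k)_{Q_m(i^{\ast\ast}),\,l}\ge {Q_m(i^{\ast\ast})}^{k-1}M_{Q_m(i^{\ast\ast}),\,l}$, which is of order $\lambda^k$. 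Since the term you want to be dominant, $w\,({M_{\sigma^{(m)}}}^k)_{Q_m(i^{\ast\ast}),\,Q_m(i^{\ast})}\,\alpha_{Q_m(i^{\ast})}$, is itself only bounded below by $c\lambda^k$ under your reasoning, a positive term of size $c\lambda^k$ cannot dominate intermediate terms of unknown sign and size $O(\lambda^k)$, and no contradiction follows as written. The strategy can be repaired: decompose paths by the last exit time from $Q_m(i^{\ast\ast})$ and the first entrance time into $Q_m(i^{\ast})$; since $w$ and $\alpha_{Q_m(i^{\ast})}$ are Perron vectors of those two blocks, each admissible splitting contributes the same amount, so the $(i^{\ast\ast},i^{\ast})$ term grows at least like $k\lambda^{k}$ (two $\lambda$\nobreakdash-blocks to linger in), whereas each intermediate term is $O(\lambda^k)$ (only one $\lambda$\nobreakdash-block available, all blocks weakly between $l$ and $Q_m(i^{\ast\ast})$ other than $Q_m(i^{\ast\ast})$ having spectral radius $<\lambda$). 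Letting $k\to\infty$ then yields the contradiction. But this counting is essential and is not the estimate you state.
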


\begin{proof}
Put $\alpha^\prime=(\alpha_u)_{u \in \B_m(n_\sigma-1)}$,  
$\alpha^{\prime \prime}=(\alpha_u)_{u \in \L_m(\sigma) \setminus 
\B_m(n_\sigma-1)}$ and 
\[
P_m(i) = 
\begin{bmatrix}
M_{{\sigma_i}^{(m)}} & 0 \\
F_{m,1}(i) & G_m(i)
\end{bmatrix}. 
\]
In order to prove \eqref{r_e_vec}, it is sufficient to show the following 
statements:
\begin{enumerate}[(a)]
\item
if $\theta_{n_\sigma} > \lambda_{n_\sigma-1}$, then $\alpha^\prime=0$ and 
$\alpha^{\prime \prime}$ may be chosen to be positive;
\item\label{xi}
if $\lambda_i=\lambda$ and $\xi$ is a right eigenvector of $P_m(i)$ 
corresponding to $\lambda_i$ such 
that $\xi^\prime:=(\xi_u)_{u \in \L_m(\sigma_i)} \ge 0$ and $\xi^\prime \ne 0$, 
then $(\xi_u)_{u \in \B_m(i) \setminus \L_m(\sigma_i)}>0$; 
\item\label{when_equal}
if $\theta_{n_\sigma}=\lambda_{n_\sigma-1}$ and $\alpha^\prime \ge 0$, then 
$\alpha^\prime=0$ and $\alpha^{\prime \prime}$ may be chosen to be positive;
\item\label{still_equal}
if $\theta_{n_\sigma} < \lambda_{n_\sigma-1}$, $\alpha^\prime \ge 0$ and 
$\alpha^\prime \ne 0$, then $\alpha^{\prime \prime}$ may be chosen to be 
positive.
\end{enumerate}
If $\theta_{n_\sigma} > \lambda_{n_\sigma-1}$, then clearly $\alpha^\prime=0$, and 
hence we may choose $\alpha^{\prime \prime}$ to be positive. Assuming the 
hypothesis of \eqref{xi}, since for a sufficiently large $k \in \N$,
\begin{equation}\label{contradiction1}
(\xi_u)_{u \in \B_m(i) \setminus \L_m(\sigma_i)}={\lambda_i}^{-k}
\sum_{j=0}^\infty\{{\lambda_i}^{-k}G_m(i)^k\}^jF_{m,k}(i)\xi^\prime>0,
\end{equation}
we obtain \eqref{xi}. Assume the hypothesis of \eqref{when_equal}. If 
$\alpha^\prime \ne 0$, then reducing ${M_{\sigma^{(m)}}}^k \alpha = 
{\theta_{n_\sigma}}^k \alpha$, we obtain a contradiction that for a sufficiently 
large $k \in \N$,
\begin{equation}\label{get_positive}
0 = \delta \{{\theta_{n_\sigma}}^kI-{Q_m(n_\sigma)}^k\} \alpha^{\prime 
\prime} = \delta R_{m,k}(n_\sigma-1)\alpha^\prime > 0,
\end{equation}
where $\delta$ is a positive, left eigenvector of $Q_m(n_\sigma)$ corresponding 
to $\theta_{n_\sigma}$. Statement~\eqref{still_equal} is obtained in the same 
manner as used to obtain \eqref{contradiction1}. 

Put
$\beta^\prime=(\beta_u)_{u \in \B_m(n_\sigma-1)}$ and 
$\beta^{\prime \prime}=(\beta_u)_{u \in \L_m(\sigma) \setminus \B_m(n_\sigma-1)}$.
In order to prove \eqref{l_e_vec}, it might be sufficient to show the following 
statements:
\begin{enumerate}[(1)]
\item
if $\lambda_{n_\sigma-1} \ge \theta_{n_\sigma}$, then 
$\beta^{\prime \prime}=0$;
\item\label{e}
if $\lambda_{n_\sigma-1} < \theta_{n_\sigma}$, then $\beta^\prime$ may 
be chosen to be positive;
\item\label{beta_P_m}
if $\lambda_{n_\sigma-1}=\lambda$ and $\xi$ is a left eigenvector of 
$P_m(n_\sigma-1)$ corresponding to $\lambda$, then 
$\xi_u=0$ for any $u \in \B_m(n_\sigma-1) \setminus \L_m(\sigma_{n_\sigma-1})$.
\end{enumerate}
Statement~\eqref{beta_P_m} follows from Lemma~\ref{start_for_PF}~\eqref{1ika}. 
If $\lambda_{n_\sigma-1}>\theta_{n_\sigma}$, then $\beta^{\prime \prime}=0$. 
Assume $\lambda_{n_\sigma-1}=\theta_{n_\sigma}$. If 
$\beta^{\prime \prime} \ne 0$, then we may assume $\beta^{\prime \prime}>0$. 
This yields a contradiction similar to \eqref{contradiction1}. 
Assume $\lambda_{n_\sigma-1}<\theta_{n_\sigma}$. We may assume 
$\beta^{\prime \prime}>0$. In a similar way to obtaining \eqref{get_positive}, 
we may see \eqref{e}. 
Statement~\eqref{geom_simple} is a consequence of the above argument. 
\end{proof}
\begin{remark}
Perron\nobreakdash-Frobenius Theory for matrices in \eqref{below_triangle} is 
discussed also in \cite{BKMS}. In fact, more general facts are stated in 
Theorem~3.1 therein. 
\end{remark}
\begin{lemma}\label{PF}
Suppose $\theta_i > 1$. If $\theta_i > \lambda_{i-1}$, then for any 
$u \in \L_m(\sigma_i) \setminus \B_m(i-1)$ and any $v \in \L_m(\sigma_i)$, 
\begin{equation}\label{conv}
\lim_{k \to \infty}\left({\theta_i}^{-k}{M_{\sigma^{(m)}}}^k\right)_{u,v}=
\alpha_u \beta_v >0.
\end{equation}

If $\theta_i \le \lambda_{i-1}$, then there exist 
$\{\gamma_u>0;u \in \L_m(\sigma_i) \setminus \B_m(i-1)\}$ and 
$\{\delta_v>0; v \in \L_m(\sigma_i) \setminus \L_m(\sigma_{i^\prime -1})\}$ 
such that 
\begin{equation}\label{div}
\lim_{k \to \infty}\left({\theta_i}^{-k}{M_{\sigma^{(m)}}}^k\right)_{u,v}=
\begin{cases}
\infty & \textrm{ if } v \in \L_m(\sigma_{i^\prime -1}); \\
\gamma_u \delta_v & \textrm{ otherwise}, 
\end{cases}
\end{equation}
where, putting 
$I=\{1 \le i_0 < i; \theta_{i_1} < \theta_i \textrm{ for any integer 
$i_1$ with }i_0 \le i_1 < i\}$, 
\[
i^\prime=
\begin{cases}
i & \textrm{ if }  I = \emptyset; \\
\min I & \textrm{ otherwise}.
\end{cases}
\]
\end{lemma}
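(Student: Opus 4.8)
The plan is to reduce the statement to the auxiliary incidence matrix $M_i:=M_{{\sigma_i}^{(m)}}$ and then split according to whether the diagonal block $Q_m(i)$ dominates. Since $\sigma(A_i)\subseteq{A_i}^+$, for every $u\in\L_m(\sigma_i)$ the word $(\sigma^{(m)})^k(u)$ is built from length\nobreakdash-$m$ factors of $\sigma^k(u)\in{A_i}^+$, all of which lie in $\L_m(\sigma_i)$; hence $({M_{\sigma^{(m)}}}^k)_{u,v}=({M_i}^k)_{u,v}$ for all $u,v\in\L_m(\sigma_i)$, and it suffices to analyze ${M_i}^k$. Applying Lemma~\ref{start_for_PF} to $\sigma_i$, the matrix $M_i$ is block lower triangular with diagonal blocks $Q_m(1),G_m(1),\dots,Q_m(i)$; by parts~\eqref{Qsame} and~\eqref{1ika} their spectral radii are $\theta_1,\dots,\theta_i$ and numbers at most $1$, respectively, so the dominant eigenvalue of $M_i$ is $\lambda_i=\max_{j\le i}\theta_j$.

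Assume first $\theta_i>\lambda_{i-1}$, so $\theta_i=\lambda_i$ and, using $\theta_i>1$, every diagonal block of $M_i$ other than $Q_m(i)$ has spectral radius strictly below $\theta_i$. Thus $\theta_i$ is a simple, strictly dominant eigenvalue of $M_i$; this is Lemma~\ref{eigen_vectors}\eqref{geom_simple} for $\sigma_i$ with $i_{\min}=i_{\max}=i$. Let $\alpha,\beta$ be right and left eigenvectors for $\theta_i$ normalized by $\beta\alpha=1$. The rank\nobreakdash-one spectral projection onto the $\theta_i$\nobreakdash-eigenspace gives $\lim_k{\theta_i}^{-k}{M_i}^k=\alpha\beta$, hence $\lim_k({\theta_i}^{-k}{M_i}^k)_{u,v}=\alpha_u\beta_v$. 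Finally $\alpha_u>0$ since $u\in\L_m(\sigma_i)\setminus\B_m(i-1)$ and $\beta_v>0$ since $v\in\L_m(\sigma_i)=\L_m(\sigma_{i_{\min}})$, by Lemma~\ref{eigen_vectors}\eqref{r_e_vec} and~\eqref{l_e_vec}; this proves \eqref{conv}.

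Now assume $\theta_i\le\lambda_{i-1}$. As $\sigma$ maps ${A_{i'-1}}^+$ into itself, $\L_m(\sigma_{i'-1})$ indexes a leading invariant block and I would write
\[
M_i=\begin{bmatrix}L&0\\S&H\end{bmatrix},\qquad
{M_i}^k=\begin{bmatrix}L^k&0\\ \sum_{a+b=k-1}H^aSL^b & H^k\end{bmatrix},
\]
with $L=M_{{\sigma_{i'-1}}^{(m)}}$ on the columns $v\in\L_m(\sigma_{i'-1})$ and $H$ on the columns $v\in\L_m(\sigma_i)\setminus\L_m(\sigma_{i'-1})$; note that $u$ lies in the $H$\nobreakdash-block, its first letter being at level $i$. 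The diagonal blocks of $H$ are $Q_m(i'),\dots,Q_m(i)$ together with certain $G_m(j)$, and by the definition of $i'$ one has $\theta_j<\theta_i$ for $i'\le j<i$; hence $Q_m(i)$ is the unique diagonal block of $H$ with spectral radius $\theta_i$, making $\theta_i$ a simple, strictly dominant eigenvalue of $H$. Repeating the projection argument of the previous paragraph for $H$ yields, for $v\in\L_m(\sigma_i)\setminus\L_m(\sigma_{i'-1})$, $\lim_k({\theta_i}^{-k}{M_i}^k)_{u,v}=\lim_k({\theta_i}^{-k}H^k)_{u,v}=\gamma_u\delta_v$, with $\gamma_u,\delta_v>0$ read off from the supports of the Perron eigenvectors of $H$ exactly as in Lemma~\ref{eigen_vectors}.

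It remains to establish divergence for $v\in\L_m(\sigma_{i'-1})$, where $({M_i}^k)_{u,v}=(\sum_{a+b=k-1}H^aSL^b)_{u,v}$. Here $L$ has dominant eigenvalue $\lambda_{i'-1}=\max_{j\le i'-1}\theta_j\ge\theta_{i'-1}\ge\theta_i$, the last inequality holding because $i'-1\notin I$. Comparing the sum with its leading summands: if $\lambda_{i'-1}>\theta_i$, then concentrating the exponent in $L$ already forces growth of order $\lambda_{i'-1}^k$, so the normalized entry tends to $\infty$; in the critical case $\lambda_{i'-1}=\theta_i$ the two equal\nobreakdash-radius blocks, one inside $L$ and $Q_m(i)$ inside $H$, are chained through $S$, producing an additional polynomial factor and growth of order $k\,{\theta_i}^k$. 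To guarantee that the prescribed $(u,v)$\nobreakdash-entry genuinely realizes this rate I would use the lower bounds of Lemma~\ref{start_for_PF}\eqref{below_bound} inside $M_i$ together with the connectivity furnished by the primitive\nobreakdash-component structure, ensuring that $S$ supplies a nonzero link from the high\nobreakdash-$\theta$ part reachable from $v$ into the block $Q_m(i)$ that reaches $u$. I expect this critical case to be the main obstacle: one must isolate the polynomial factor and verify that it survives in $({\theta_i}^{-k}{M_i}^k)_{u,v}$ rather than cancelling, so that the limit is indeed $\infty$.
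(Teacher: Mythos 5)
Your reduction to $M_i:=M_{{\sigma_i}^{(m)}}$ and both convergent regimes are sound: the case $\theta_i>\lambda_{i-1}$ is the paper's own argument (spectral projection via the Jordan form, positivity from Lemma~\ref{eigen_vectors}), and your uniform treatment of the finite entries of \eqref{div} by restricting to the block $H$ on $\L_m(\sigma_i)\setminus\L_m(\sigma_{i'-1})$ is exactly what the paper does when $\theta_i<\lambda_{i-1}$, applied also to $\theta_i=\lambda_{i-1}$ --- arguably tidier than the paper's separate generalized-eigenvector computation there. The genuine gap is in the divergence half of \eqref{div}. Note first that $\lambda_{i'-1}=\lambda_{i-1}$ (every $\theta_j$ with $i'\le j<i$ is $<\theta_i\le\theta_{i'-1}$), so your two subcases are precisely the paper's cases $\theta_i<\lambda_{i-1}$ and $\theta_i=\lambda_{i-1}$; the critical one you leave open (``the main obstacle\dots one must isolate the polynomial factor'') is therefore exactly the case to which the paper devotes the bulk of its proof, via the Jordan chain of length $r>1$, the generalized eigenvectors $\xi,\rho,\dots$ with their block-by-block supports, and the growth $(J^k)_{p,p+q}\asymp k^q$. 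Leaving that as an expectation is a real omission, not a routine verification.

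Moreover, your ``easy'' subcase is handled incorrectly. If $\lambda_{i'-1}>\theta_i$, concentrating the exponent in $L$ does \emph{not} force the $(u,v)$-entry to grow like ${\lambda_{i'-1}}^k$: since $M_i$ is block lower triangular, a path from $u$ to $v$ never visits a component of index smaller than that of $v$, so the growth of $({M_i}^k)_{u,v}$ is governed by $\max_{j\le l\le i}\theta_l$, where $v\in\L_m(\sigma_j)\setminus\L_m(\sigma_{j-1})$, and this maximum can equal $\theta_i$. For instance, with $\theta_1=3$, $\theta_2=\theta_3=2$, $i=3$ (so $i'=3$, $\lambda_{i'-1}=3$) and $v$ in the second component, one has $({M_i}^k)_{u,v}\asymp k\,2^k$, not $3^k$; so even in this subcase you are thrown back on the chaining mechanism you postponed. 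The correct and fillable argument has nothing to do with cancellation (all entries are nonnegative, so nothing can cancel); what is needed is a count of split points. Write $({M_i}^k)_{u,v}=\sum_{a+b=k-1}(H^aSL^b)_{u,v}$, and fix $w$ in the $H$-block and $w'\in\L_m(\sigma_{i'-1})\setminus\B_m(i'-2)$ joined by a positive entry of the lower-left block of a fixed power ${M_i}^{k_1}$ (such connectivity is what the eventual positivity of the $R$-blocks in the proof of Lemma~\ref{start_for_PF} supplies; replacing $S$ by this block costs at most a constant factor $k_1$, since each term $H^aSL^b$ gets counted at most $k_1$ times). Then $(H^a)_{u,w}\ge c\,{\theta_i}^a$ for large $a$ by your Perron analysis of $H$, while $(L^b)_{w',v}\ge c'\,{\theta_i}^b$ for large $b$ by Lemma~\ref{start_for_PF}~\eqref{below_bound} applied to $\sigma_{i'-1}$, legitimately because $\max_{j\le l\le i'-1}\theta_l\ge\theta_{i'-1}\ge\theta_i$. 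Summing these nonnegative contributions over the roughly $k$ admissible split points gives $({M_i}^k)_{u,v}\ge C\,k\,{\theta_i}^k$ for all large $k$, hence the limit $\infty$, uniformly in both of your subcases. None of these ingredients --- the two lower bounds, the connecting entry, or the count over split points --- appears in your proposal, so the divergence statement remains unproved as written.
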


\begin{proof}
We may assume $i=n_\sigma$. Let $s$ denote the size of $M_{\sigma^{(m)}}$.

Suppose $\theta_i > \lambda_{i-1}$. Let 
$N$ be a matrix which puts 
${\theta_i}^{-1}M_{\sigma^{(m)}}$ into a Jordan normal form:
\[
N^{-1}({\theta_i}^{-1}M_{\sigma^{(m)}})N=
\begin{bmatrix}
1 & 0 & 0 & \cdots & 0 \\
0 & \epsilon_1/\lambda & \ast & \cdots & 0 \\
0 & 0 & \epsilon_2/\lambda & \cdots & 0 \\
\vdots & \vdots & \vdots & \ddots & \vdots \\
0 & 0 & 0 & \cdots & \epsilon_{s-1}/\lambda
\end{bmatrix},
\]
where $\epsilon_1, \epsilon_2, \dots , \epsilon_{s-1}$ are eigenvalues of 
$M_{\sigma^{(m)}}$ other than $\lambda$. We obtain \eqref{conv}, since 
\[
N^{-1}\left(\lim_{k \to \infty} {\theta_i}^{-k}{M_{\sigma^{(m)}}}^k\right)N=
\begin{bmatrix}
1 & 0 & \cdots & 0 \\
0 & 0 & \cdots & 0 \\
\vdots & \vdots & \ddots & \vdots \\
0 & 0 & \cdots & 0
\end{bmatrix},
\]
and since the first column and the first row of $N$ and $N^{-1}$ are $\alpha$ 
and $\beta$, respectively. 

Assume $\theta_i = \lambda_{i-1}$. In this case, $\eta_j=\theta_i$ for any 
integer $j$ with $1 \le j < i$. Let $N$ be a matrix which puts 
${\theta_i}^{-1}M_{\sigma^{(m)}}$ into a Jordan normal form:
\[
N^{-1}({\theta_i}^{-1}M_{\sigma^{(m)}})N=J:=
\begin{bmatrix}
1 & 1/\lambda & \cdots & 0 & 0 & 0 & \dots & 0 \\
0 & 1 & \cdots & 0 & 0 & 0 & \dots & 0 \\
\vdots & \vdots & \ddots & \vdots & \vdots & \vdots & & \vdots \\
0 & 0 & \cdots & 1 & 1/\lambda & 0 &\cdots & 0 \\
0 & 0 & \cdots & 0 & 1 & 0 & \cdots & 0 \\
0 & 0 & \cdots & 0 & 0 & \epsilon_{r+1} / \lambda & \cdots & 0 \\
\vdots & \vdots & & \vdots & \vdots & \vdots & \ddots & \vdots \\
0 & 0 & \cdots & 0 & 0 & 0 & \cdots & \epsilon_s / \lambda
\end{bmatrix},
\]
where $\epsilon_{r+1},\epsilon_{r+2},\dots,\epsilon_s$ are eigenvalues of 
$M_{\sigma^{(m)}}$ other than $\lambda$. By Lemma~\ref{eigen_vectors}, 
the multiplicity $r$ of the eigenvalue $\lambda$ is greater than one. 

Set $\{1 \le p \le n_\sigma;\theta_p=\lambda\} = 
\{i_{\min}=i_1<i_2<\dots < i_r=n_\sigma\}$. Set $s_p =\sharp \L_m(\sigma_{i_p})$ 
for $1 \le p \le r$. 
Let $\xi$ be such that $\xi(M_{\sigma^{(m)}}-\lambda I)=\beta$. 
Put 
$\xi^\prime=(\xi_j)_{j=1}^{s_1}$ and 
$\xi^{\prime \prime}=(\xi_j)_{j > s_1}$. 
If $\xi^{\prime \prime}=0$, then 
$\xi^\prime(M_{\sigma_{i_1}^{(m)}}-\lambda I)=\beta^\prime:=
(\beta_j)_{j=1}^{s_1}$. This yields a contradiction that
$0=\xi^\prime(M_{\sigma_{i_1}^{(m)}}-\lambda I)\zeta=
\beta^\prime \zeta>0$, where $\zeta$ is a nonnegative, right eigenvector of 
$M_{{\sigma_{i_1}}^{(m)}}$ corresponding to $\lambda$. Hence, 
$\xi^{\prime \prime}$ is a left eigenvector of the matrix
\[
\begin{bmatrix}
G_m(i_1) & 0 & \cdots & 0 \\
\ast & Q_m(i_1+1) & \cdots & 0 \\
\vdots & \vdots & \ddots & \vdots \\
\ast & \ast & \cdots & Q_m(n_\sigma)
\end{bmatrix}
\] corresponding to $\lambda$. 
Using techniques developed in the proof of Lemma~\ref{eigen_vectors}, we may 
verify that given an 
integer $j$ with $s_1 < j \le s$, $\xi_j \ne 0$ if and only if 
$s_1 < j \le s_2$. 
Let $\rho$ be such that $\rho(M_{\sigma^{(m)}}-\lambda I)=\xi$. 
The same argument shows that given an integer $j$ with $s_2 < j \le s$, 
$\rho_j \ne 0$ if and only if $s_2 < j \le s_3$. 

Repeating this argument, we may see that given an integer $p$ with 
$1 \le p \le r$, the $p$\nobreakdash-th row $\xi$ of $N^{-1}$ satisfies the 
properties: 
\begin{itemize}
\item
$\xi_j \ne 0$ if $s_{r-p} < j \le s_{r-p+1}$;
\item
$\xi_j = 0$ if $s_{r-p+1} < j \le s$, 
\end{itemize}
where $s_0=0$. Since given integers $p,q$ with 
$1 \le p < r$ and $1 \le q \le r - p$, 
$\lim_{k \to \infty}(J^k)_{p,p+q}/k^t > 0$ if and only if $t=q$, it follows 
that under the extended arithmetics, 
\begin{align*}
& \lim_{k \to \infty}{\theta_i}^{-k}{M_{\sigma^{(m)}}}^k = 
N(\lim_{k \to \infty}J^k)N^{-1} \\
& =
\begin{bmatrix}
0 & & \\
\vdots & & \\
0 & & \\
\alpha_{r^\prime+1} & \parbox{12pt}{\Huge $\ast$}& \\
\alpha_{r^\prime+2} & & \\
\vdots & & \\
\alpha_s & &
\end{bmatrix}
\begin{bmatrix}
1 & \infty & \cdots & \infty & & \\
0 & 1 & \cdots & \infty & & \\
\vdots & \vdots & \ddots & \vdots & \parbox{12pt}{\Huge $0$} & \\
0 & 0 & \cdots & 1 & & \\
 & & & & & \\
& & \parbox{12pt}{\Huge $0$} && \parbox{12pt}{\Huge $0$} & \\
& & & & &
\end{bmatrix}
\begin{bmatrix}
\ast & \cdots & \ast & \ast & \cdots & \ast & \dots & 
\xi_{s_{r-1}+1} & \cdots & \xi_s \\
\vdots & & \vdots & \vdots & & \vdots & & \vdots &  & \vdots \\
\ast & \cdots & \ast & \xi_{s_2+1} & \cdots & \xi_{s_3} & \dots & 0 & \cdots & 0 \\
\beta_1 & \cdots & \beta_{s_1} & 0 & \cdots & 0 & \cdots & 0 & \cdots & 0 \\
 & & & & & & & &  & \\
& & & & \parbox{12pt}{\Huge $\ast$}& & & \\
& & & & & & & & & 
\end{bmatrix} \\
& =
\begin{bmatrix}
 & & & & & & \\
 & & \parbox{12pt}{\Huge $\ast$} & & & & \\
 & & & & & & \\
\alpha_{r^\prime+1} & \infty & \cdots & \infty & 0 & \cdots & 0 \\
\alpha_{r^\prime+2} & \infty & \cdots & \infty & 0 & \cdots & 0 \\
\vdots &  \vdots & & \vdots & \vdots & & \vdots \\
\alpha_s & \infty & \cdots & \infty & 0 & \cdots & 0 
\end{bmatrix}
\begin{bmatrix}
\ast & \cdots & \ast & \ast & \cdots & \ast & \dots & \xi_{s_{r-1}+1} & \cdots & 
\xi_s \\
\vdots & & \vdots & \vdots & & \vdots & & \vdots &  & \vdots \\
\ast & \cdots & \ast & \xi_{s_2+1} & \cdots & \xi_{s_3} & \dots & 0 & \cdots & 0 \\
\beta_1 & \cdots & \beta_{s_1} & 0 & \cdots & 0 & \cdots & 0 & \cdots & 0 \\
 & & & & & & & &  & \\
& & & & \parbox{12pt}{\Huge $\ast$}& & & \\
& & & & & & & & & 
\end{bmatrix} \\
& =
\begin{bmatrix}
& & & & & & \\
& & & & \parbox{12pt}{\Huge $\ast$} & & \\
& & & & & & \\
\infty & \infty & \cdots & \infty & \alpha_{r^\prime+1}\xi_{s_{r-1}+1} & 
\alpha_{r^\prime+1}\xi_{s_{r-1}+2} & \cdots & \alpha_{r^\prime+1}\xi_s \\
\infty & \infty & \cdots & \infty & \alpha_{r^\prime+2}\xi_{s_{r-1}+1} & 
\alpha_{r^\prime+2}\xi_{s_{r-1}+2} & \cdots & \alpha_{r^\prime+2}\xi_s \\
\vdots &  \vdots & & \vdots & \vdots & \vdots & & \vdots \\
\infty & \infty & \cdots & \infty & \alpha_s\xi_{s_{r-1}+1} & 
\alpha_s\xi_{s_{r-1}+2} & \cdots & \alpha_s\xi_s \\
\end{bmatrix},
\end{align*}
where $r^\prime = \sharp \B_m(n_\sigma-1)$. 

Assume $\theta_i < \lambda_{i-1}$. Put $M=\{(M_{\sigma^{(m)}})_{u,v}\}_{u,v 
\in \L_m(\sigma) \setminus \L_m(\sigma_{i^\prime -1})}$. 
It follows that $\theta_i$ is a simple, dominant eigenvalue of $M$. Since 
similar statements to 
Lemma~\ref{eigen_vectors} holds for this matrix $M$, arguments used to show 
\eqref{conv} of this lemma show the second half of \eqref{div}. 
The other half is obtained by using Lemma~\ref{start_for_PF}~\eqref{below_bound}. 
\end{proof}

\begin{example}\label{ex_PF}
\begin{enumerate}[(i)]
\item\label{ex_PF_1}
Set $A=\{a,b,c\}$. Define $\sigma:A \to A^+$ by $a \mapsto a^4, b \mapsto ab^3, 
c \mapsto cbc$. Then 
\[
M_\sigma=\begin{bmatrix}
4 & 0 & 0 \\
1 & 3 & 0 \\
0 & 1 & 2
\end{bmatrix}, \theta_1=4, \theta_2=3, \theta_3=2, \alpha=\begin{bmatrix}
2 \\ 2 \\ 1 \end{bmatrix}, \beta = \begin{bmatrix} 1 & 0 & 0 \end{bmatrix}. 
\]
It follows that $\L_2(\sigma)=\{aa,ab,ba,bb,bc,ca,cb\}$, $\L_2(\sigma_1)=\{aa\}$, 
$\B_2(1)=\{aa,ab\}$, $\L_2(\sigma_2)=\{aa,ab,ba,bb\}$, 
$\B_2(2)=\{aa,ab,ba,bb,bc\}$. We have
\begin{align*}
\sigma^{(2)} (aa) &= aa, aa, aa, aa; & \sigma^{(2)}(ab) &= aa, aa, aa, aa; & 
\sigma^{(2)}(ba) &= ab, bb, bb, ba; \\ 
\sigma^{(2)}(bb) &= ab, bb, bb, ba; & \sigma^{(2)}(bc) &= ab, bb, bb, bc; & 
\sigma^{(2)}(ca) &= cb, bc, ca; \\
\sigma^{(2)}(cb) &= cb, bc, ca,
\end{align*}
and
\[
M_{\sigma^{(2)}}=
\begin{bmatrix}
4 & 0 & 0 & 0 & 0 & 0 & 0 \\
4 & 0 & 0 & 0 & 0 & 0 & 0 \\
0 & 1 & 1 & 2 & 0 & 0 & 0 \\
0 & 1 & 1 & 2 & 0 & 0 & 0 \\
0 & 1 & 0 & 2 & 1 & 0 & 0 \\
0 & 0 & 0 & 0 & 1 & 1 & 1 \\
0 & 0 & 0 & 0 & 1 & 1 & 1
\end{bmatrix}, \alpha=
\begin{bmatrix}
2 \\ 2 \\ 2 \\ 2 \\ 2 \\ 1 \\ 1
\end{bmatrix}, \beta=
\begin{bmatrix}
1 & 0 & 0 & 0 & 0 & 0 & 0
\end{bmatrix}.
\]
\item
Set $A=\{a,b,c,d\}$. Define $\sigma:A \to A^+$ by $a \mapsto aa, b \mapsto 
ab^3c^3, c \mapsto abc^5, d \mapsto abcd^2$. Then
\[
M_\sigma=
\begin{bmatrix}
2 & 0 & 0 & 0 \\
1 & 3 & 3 & 0 \\
1 & 1 & 5 & 0 \\
1 & 1 & 1 & 2 
\end{bmatrix}, \theta_1 = 2, \theta_2 = 6, \theta_3 = 2, 
\alpha=\begin{bmatrix}
0 \\ 2 \\ 2 \\ 1
\end{bmatrix}, \beta=\begin{bmatrix}
1 & 1 & 3 & 0
\end{bmatrix}.
\]
It follows that $\L_2(\sigma)=\{aa,ab,bb,bc,ca,cc,cd,da,dd\}$, 
$\L_2(\sigma_1)=\{aa\}$, $\B_2(1)=\{aa,ab\}$, 
$\L_2(\sigma_2)=\{aa,ab,bb,bc,ca,cc\}$, $\B_2(2)=\{aa,ab,bb,bc,ca,cc,cd\}$. We 
have 
\begin{align*}
\sigma^{(2)}(aa) &= aa, aa; & \sigma^{(2)}(ab) &= aa, aa; \\
\sigma^{(2)}(bb) &= ab, bb, bb, bc, cc, cc, ca; &
\sigma^{(2)}(bc) &= ab, bb, bb, bc, cc, cc, ca; \\ 
\sigma^{(2)}(ca) &= ab, bc, cc, cc, cc, cc, ca; & 
\sigma^{(2)}(cc) &= ab, bc, cc, cc, cc, cc, ca; \\
\sigma^{(2)}(cd) &= ab, bc, cc, cc, cc, cc, ca; &
\sigma^{(2)}(da) &= ab, bc, cd, dd, da; \\ 
\sigma^{(2)}(dd) &= ab, bc, cd, dd, da, 
\end{align*}
and
\[
M_{\sigma^{(2)}}=
\begin{bmatrix}
2 & 0 & 0 & 0 & 0 & 0 & 0 & 0 & 0 \\
2 & 0 & 0 & 0 & 0 & 0 & 0 & 0 & 0 \\
0 & 1 & 2 & 1 & 1 & 2 & 0 & 0 & 0 \\
0 & 1 & 2 & 1 & 1 & 2 & 0 & 0 & 0 \\
0 & 1 & 0 & 1 & 1 & 4 & 0 & 0 & 0 \\
0 & 1 & 0 & 1 & 1 & 4 & 0 & 0 & 0 \\
0 & 1 & 0 & 1 & 1 & 4 & 0 & 0 & 0 \\
0 & 1 & 0 & 1 & 0 & 0 & 1 & 1 & 1 \\
0 & 1 & 0 & 1 & 0 & 0 & 1 & 1 & 1
\end{bmatrix}, 
\alpha = 
\begin{bmatrix}
0 \\ 0 \\ 2 \\ 2 \\ 2 \\ 2 \\ 2 \\ 1 \\ 1
\end{bmatrix}, 
\beta = \begin{bmatrix}
1 & 2 & 1 & 2 & 2 & 7 & 0 & 0 & 0
\end{bmatrix}.
\]
\end{enumerate}
\end{example}

\section{Invariant measures for $X_\sigma$}\label{inv_meas}

Let $\sigma:A \to A^+$ be a substitution of some primitive components. 
Let $i \in \N$ be $1 < i \le n_\sigma$. 
\begin{corollary}\label{freq}
Suppose $\theta_i > 1$. Let $a \in A_i \setminus A_{i-1}$, $v \in \L(\sigma_i)$, 
$m=|v|$ and $u \in \L_m(\sigma_i)$ with $u_1=a$. 
\begin{enumerate}[{\rm (i)}]
\item\label{prob}
If $\theta_i > \lambda_{i-1}$, then 
\begin{equation*}
\lim_{k \to \infty}\frac{N(v,\sigma^k(a))}{|\sigma^k(a)|}=\frac{\beta_v}{
\sum\limits_{w \in \L_m(\sigma_i)}\beta_w}.
\end{equation*}
\item\label{infinite}
If $\theta_i \le \lambda_{i-1}$, then  
\begin{equation*}
\lim_{k \to \infty}\frac{1}{{\theta_i}^k}N(v, \sigma^k(a))=
\begin{cases}
\infty & \textrm{ if } v \in \L_m(\sigma_{i^\prime-1}); \\
\gamma_u \delta_v & \textrm{ otherwise}. 
\end{cases}
\end{equation*}
\item
If $v \notin \L(\sigma_{i^\prime-1})$, then 
\begin{equation*}
\lim_{k \to \infty}\frac{N(v, \sigma^k(a))}
{\sum\limits_{b \in A_i \setminus A_{i-1}}N(b, \sigma^k(a))}=
\frac{\delta_v}{\sum\limits_{w \in \L_m(\sigma_i) \setminus \B_m(i-1)}
\delta_w}.
\end{equation*}
\end{enumerate}
\end{corollary}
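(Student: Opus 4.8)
The plan is to transfer every quantity to the auxiliary matrix $M_{\sigma^{(m)}}$ and read off the asymptotics from Lemma~\ref{PF}; the bridge is a single combinatorial identity carrying a uniformly bounded error. Fix the $m$\nobreakdash-word $u \in \L_m(\sigma_i)$ with $u_1=a$; since $a \in A_i \setminus A_{i-1}$ we have $u_1 \notin A_{i-1}$, so $u \in \L_m(\sigma_i) \setminus \B_m(i-1)$ and Lemma~\ref{PF} applies to the row indexed by $u$. By the construction of $\sigma^{(m)}$ (cf.\ \cite{Qu}), the word $(\sigma^{(m)})^k(u)$ lists exactly the $m$\nobreakdash-windows $\sigma^k(u)_{[j,j+m)}$ of $\sigma^k(u)$ beginning inside its first block $\sigma^k(u_1)=\sigma^k(a)$, namely for $1 \le j \le |\sigma^k(a)|$; hence $(M_{\sigma^{(m)}}^k)_{u,v}$ counts those windows equal to $v$. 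The windows with $1 \le j \le |\sigma^k(a)|-m+1$ lie entirely within $\sigma^k(a)$ and contribute $N(v,\sigma^k(a))$, while only the last $m-1$ spill into later blocks. First I would record the resulting uniform estimate
\[
0 \le (M_{\sigma^{(m)}}^k)_{u,v} - N(v,\sigma^k(a)) \le m-1,
\]
together with $\sum_{w \in \L_m(\sigma_i)} N(w,\sigma^k(a)) = |\sigma^k(a)|-m+1$, the latter because $\sigma(A_i) \subseteq {A_i}^+$ forces every factor of $\sigma^k(a)$ to lie in $\L(\sigma_i)$.

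Since $\theta_i>1$, the error $\theta_i^{-k}(m-1)$ vanishes, so $\theta_i^{-k}N(w,\sigma^k(a))$ has, for each fixed $w$, the same limit as $\theta_i^{-k}(M_{\sigma^{(m)}}^k)_{u,w}$. For \eqref{prob}, where $\theta_i>\lambda_{i-1}$, Lemma~\ref{PF}~\eqref{conv} gives $\theta_i^{-k}N(w,\sigma^k(a)) \to \alpha_u\beta_w$ for all $w \in \L_m(\sigma_i)$ with the \emph{same} $u$. Writing
\[
\frac{N(v,\sigma^k(a))}{|\sigma^k(a)|} = \frac{\theta_i^{-k}N(v,\sigma^k(a))}{\theta_i^{-k}\sum_{w \in \L_m(\sigma_i)} N(w,\sigma^k(a)) + \theta_i^{-k}(m-1)}
\]
and letting $k \to \infty$, the prefactor $\alpha_u$ cancels between numerator and denominator, leaving $\beta_v/\sum_{w}\beta_w$; this also explains why the limit does not depend on the chosen $a$ or $u$.

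Part \eqref{infinite} is then immediate: when $\theta_i \le \lambda_{i-1}$ the estimate above and Lemma~\ref{PF}~\eqref{div} give $\theta_i^{-k}N(v,\sigma^k(a)) \to \infty$ for $v \in \L_m(\sigma_{i^\prime-1})$ and $\to \gamma_u\delta_v$ otherwise. For the last assertion I would first prove the companion counting identity for new letters: an occurrence of some $b \in A_i \setminus A_{i-1}$ at a position $p \le |\sigma^k(a)|-m+1$ is precisely the start of one window in $\L_m(\sigma_i) \setminus \B_m(i-1)$, whence
\[
0 \le \sum_{b \in A_i \setminus A_{i-1}} N(b,\sigma^k(a)) - \sum_{w \in \L_m(\sigma_i) \setminus \B_m(i-1)} N(w,\sigma^k(a)) \le m-1.
\]
Because $\L_m(\sigma_{i^\prime-1}) \subseteq \B_m(i-1)$, every $w$ in this index set satisfies $w \notin \L_m(\sigma_{i^\prime-1})$, so by \eqref{infinite} each $\theta_i^{-k}N(w,\sigma^k(a)) \to \gamma_u\delta_w$ is finite; dividing by $\theta_i^k$, the bounded errors die and the common prefactor $\gamma_u$ cancels, yielding $\delta_v/\sum_{w}\delta_w$.

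The only delicate point is the bookkeeping in the bridging identity, namely identifying $(M_{\sigma^{(m)}}^k)_{u,v}$ with the count of first\nobreakdash-block\nobreakdash-anchored windows of $\sigma^k(u)$ and isolating the $m-1$ boundary windows. Once that is in place, the three assertions are formal consequences of Lemma~\ref{PF}, and the cancellation of the row prefactors $\alpha_u$ and $\gamma_u$ is exactly what renders the limits independent of the auxiliary data $a$ and $u$.
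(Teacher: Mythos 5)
Your proposal is correct and takes essentially the same route as the paper's own proof: both pass to the auxiliary substitution $\tau=\sigma^{(m)}$, use the bridging estimate $N(v,\tau^k(u))-(m-1)\le N(v,\sigma^k(a))\le N(v,\tau^k(u))$ together with $|\sigma^k(a)|=|\tau^k(u)|$, and then read off all three limits from Lemma~\ref{PF}. The only difference is expository: you spell out the window-counting justification of the bridge and the companion identity for part (iii), which the paper leaves implicit.
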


\begin{proof}
Put $\tau=\sigma^{(m)}$. 
Assume $\theta_i > \lambda_{i-1}$. Since for every $k \in \N$, 
\begin{gather*}
|\sigma^k(a)| = |\tau^k(u)|; \notag \\
N(v, \tau^k(u))-(m-1) \le N(v, \sigma^k(a)) \le N(v,\tau^k(u)), \label{eqFreq}
\end{gather*}
it follows from Lemma~\ref{PF} that 
\[
\lim_{k \to \infty}\frac{N(v,\sigma^k(a))}{|\sigma^k(a)|}=\lim_{k \to \infty} 
\frac{N(v,\tau^k(u))}{|\tau^k(u)|}=\lim_{k \to \infty} 
\frac{({M_\tau}^k)_{u,v}}{\sum\limits_{w \in \L_m(\sigma_i)}
({M_\tau}^k)_{u,w}}=\frac{\beta_v}{\sum\limits_{w \in \L_m(\sigma_i)}\beta_w}.
\]
Assume $\theta_i \le \lambda_{i-1}$. Since
\[
\lim_{k \to \infty}\frac{1}{{\theta_i}^k}N(v, \sigma^k(a))=
\lim_{k \to \infty}\frac{1}{{\theta_i}^k}({M_\tau}^k)_{u,v},
\]
\eqref{infinite} follows from Lemma~\ref{PF}. 
It follows from Lemma~\ref{PF} again that
\[
\lim_{k \to \infty}\frac{N(v, \sigma^k(a))}
{\sum\limits_{b \in A_i \setminus A_{i-1}}N(b, \sigma^k(a))}=
\lim_{k \to \infty}
\frac{({M_\tau}^k)_{u,v}}
{\sum\limits_{w \in \L_m(\sigma_i) \setminus \B_m(i-1)}({M_\tau}^k)_{u,w}}
=\frac{\delta_v}{\sum\limits_{w \in \L_m(\sigma_i) \setminus \B_m(i-1)}
\delta_w}.
\]
\end{proof}
Suppose that $X_i$ in \eqref{decomposition} is nonempty. 
Lemma~\ref{start_for_PF}~\eqref{when_1} allows us to assume $\theta_i>1$. 
Let $\omega \in X_i$ be a quasi\nobreakdash-fixed point of the substitution 
$\sigma$, so that for each $k \in \N$, there are integers $m_k \ge 0$ and 
$n_k>0$ such that $\omega_{[-m_k,n_k)}=\sigma^k(\omega_0)$. Then, the 
following holds.
\begin{proposition}\label{existence}
\begin{enumerate}[{\rm (i)}]
\item\label{prob_meas}
If $\lambda_{i-1} < \theta_i$, then the weak$^\ast$ limit 
\[
\mu_i = 
\lim_{k \to \infty} \frac{1}{m_k+n_k}\sum_{j=-m_k}^{n_k-1}
\delta_{{T_\sigma}^j\omega}
\]
exists. 
\item\label{infinite_meas}
If $\lambda_{i-1} \ge \theta_i$, then $X_i$ has an infinite, invariant 
Radon measure $\nu_i$ characterized by the fact that for any $v \in \L(\sigma_i)$,
\[
\nu_i([v])=\lim_{k \to \infty}\frac{1}{{\theta_i}^k}\sum_{j=-m_k}^{n_k-1}
\delta_{{T_\sigma}^j\omega}([v]).
\]
\end{enumerate}
\end{proposition}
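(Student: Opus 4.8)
The plan is to realize both measures as limits of the finitely supported empirical measures
\[
\mu_k := c_k^{-1}\sum_{j=-m_k}^{n_k-1}\delta_{T_\sigma^j\omega},
\]
with $c_k=m_k+n_k=|\sigma^k(\omega_0)|$ in case~\eqref{prob_meas} and $c_k=\theta_i^{\,k}$ in case~\eqref{infinite_meas}, and to read off their values on cylinders from Corollary~\ref{freq}. Write $b=\omega_0\in A_i\setminus A_{i-1}$ and recall $\omega_{[-m_k,n_k)}=\sigma^k(b)$. The first observation is that for every cylinder $[u.v]$,
\[
\Bigl|\,\sum_{j=-m_k}^{n_k-1}\delta_{T_\sigma^j\omega}([u.v])-N(uv,\sigma^k(b))\,\Bigr|\le |u|+|v|,
\]
since the two counts differ only by occurrences of $uv$ straddling the two ends of the block $\sigma^k(b)$. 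Because $\theta_i>1$ forces $|\sigma^k(b)|\to\infty$ and $\theta_i^{\,k}\to\infty$, these bounded boundary discrepancies vanish after division by $c_k$, so that $\lim_k\mu_k([u.v])$ is governed entirely by $\lim_k c_k^{-1}N(uv,\sigma^k(b))$.

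\emph{Case~\eqref{prob_meas}.} Here each $\mu_k$ is a Borel probability measure on the compact space $X_{\sigma_i}$, and $c_k=|\sigma^k(b)|$. By Corollary~\ref{freq}~\eqref{prob}, applied to the word $uv$, the limit $\lim_k\mu_k([u.v])=\beta_{uv}\big/\sum_{w\in\L_{|uv|}(\sigma_i)}\beta_w$ exists for each cylinder. Since the cylinder sets are clopen, every $f\in C(X_{\sigma_i})$ is a uniform limit of finite linear combinations of their indicators (the span of these indicators is a point-separating subalgebra containing the constants, hence uniformly dense by Stone--Weierstrass); as $\mu_k$ has total mass $1$, cylinder-wise convergence upgrades to convergence of $\mu_k(f)$ for every $f\in C(X_{\sigma_i})$. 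The resulting positive norm-one functional is represented by a probability measure $\mu_i$, which is the asserted weak$^\ast$ limit.

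\emph{Case~\eqref{infinite_meas}} is the substantive one, and it must be carried out on the locally compact, non-compact space $X_i$: when $\lambda_{i-1}\ge\theta_i$ the total mass $c_k^{-1}|\sigma^k(b)|\to\infty$, so no weak$^\ast$ limit survives on $X_{\sigma_i}$. As $\omega\in X_i$ and $X_i$ is $T_\sigma$-invariant, each $\mu_k$ is a finite measure on $X_i$, and I would prove that $\mu_k$ converges \emph{vaguely}, i.e.\ that $\lim_k\mu_k(f)$ exists for every $f\in C_c(X_i)$; Riesz representation then produces the Radon measure $\nu_i$. The crucial ingredient is a uniform mass bound on compacta. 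The sets $U_N:=\{x\in X_{\sigma_i};x_{[-N,N]}\notin\L(\sigma_{i-1})\}$ are clopen and increase to $X_{\sigma_i}\setminus X_{\sigma_{i-1}}\supset X_i$, so any compact $K\subset X_i$ lies in some $U_N$. Every word $w\in\L_{2N+1}(\sigma_i)\setminus\L(\sigma_{i-1})$ satisfies $w\notin\L(\sigma_{i'-1})$ (because $\L(\sigma_{i'-1})\subset\L(\sigma_{i-1})$, as $i'\le i$), so Corollary~\ref{freq}~\eqref{infinite} gives a \emph{finite} limit for $c_k^{-1}N(w,\sigma^k(b))$; summing over the finitely many such $w$ shows $\mu_k(U_N)$ is bounded. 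Thus the masses $\mu_k(K)$ remain bounded although the total masses diverge.

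With this bound in hand I would approximate a given $f\in C_c(X_i)$ uniformly by finite combinations $g=\sum_l a_l\,\mathbf 1_{[u_l.v_l]}$ of cylinder indicators chosen fine enough to lie inside $U_N$ (possible since $\operatorname{supp}f$ is compact in the open set $U_N$ and cylinders form a basis); each $u_lv_l\notin\L(\sigma_{i-1})$, so $\lim_k\mu_k([u_l.v_l])$ exists and is finite by Corollary~\ref{freq}~\eqref{infinite}, whence $\mu_k(g)$ converges, and the uniform mass bound lets the approximation pass to the limit to yield $\lim_k\mu_k(f)$. This defines a positive linear functional on $C_c(X_i)$, hence a Radon measure $\nu_i$; unravelling the construction gives $\nu_i([v])=\lim_k c_k^{-1}\sum_j\delta_{T_\sigma^j\omega}([v])$, the value being finite for $v\notin\L(\sigma_{i'-1})$ and $+\infty$ for $v\in\L(\sigma_{i'-1})$ (the latter by inner regularity, exhausting $[v]\cap X_i$ by compacta on which the $\mu_k$-masses blow up). Invariance follows because $(T_\sigma)_\ast\mu_k-\mu_k=\theta_i^{-k}(\delta_{T_\sigma^{n_k}\omega}-\delta_{T_\sigma^{-m_k}\omega})$ tends vaguely to $0$, and $\nu_i$ is infinite because in case~\eqref{infinite_meas} one has $i'\ge 2$, so $\L(\sigma_{i'-1})\ne\emptyset$ and some cylinder carries infinite mass. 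The main obstacle is precisely this vague-convergence step on $X_i$: reconciling the divergence of the total mass with finiteness on compacta, for which the identification of compacta with the sets $U_N$ and the finite-frequency dichotomy of Corollary~\ref{freq}~\eqref{infinite} are the essential tools.
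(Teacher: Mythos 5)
Your part~\eqref{prob_meas} is, modulo routine functional analysis, the paper's own argument: both reduce to the cylinder frequencies of Corollary~\ref{freq}~\eqref{prob}, and your Stone--Weierstrass/Riesz bookkeeping only makes explicit what the paper leaves implicit. In part~\eqref{infinite_meas} you take a genuinely different construction route: the paper defines an extended real-valued set function $\tilde\nu_i$ on the ring of cylinder sets by exactly the formula in the statement, asserts that it is countably additive and finite on compact open subsets of $X_i$, and invokes an extension theorem; you instead form the vague limit of the empirical measures on the locally compact space $X_i$ and apply Riesz representation. The two routes consume the same key input: your uniform bound $\mu_k(U_N)\le C_N$ (compacta of $X_i$ lie in some $U_N$, whose defining words avoid $\L(\sigma_{i-1})\supset\L(\sigma_{i^\prime-1})$, so Corollary~\ref{freq}~\eqref{infinite} gives finite limits) is precisely the paper's ``finite on any compact open subset of $X_i$''. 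Your route buys transparent proofs of invariance (the pushforward discrepancy $\theta_i^{-k}(\delta_{T_\sigma^{n_k}\omega}-\delta_{T_\sigma^{-m_k}\omega})\to 0$) and of the Radon property, which the paper obtains only through its unproved additivity claim. Note also that since every point of $X_i$ contains a letter of $A_i\setminus A_{i-1}$, any cylinder $[u.v]$ whose word contains such a letter satisfies $[u.v]\cap X_{\sigma_i}\subset X_i$ and is compact open, so for those words the asserted characterization is immediate from vague convergence.

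The step you dismiss as ``unravelling the construction'' is, however, a real gap for words $v\in{A_{i-1}}^{+}$: there $[v]\cap X_i$ is open but in general \emph{not} compact (the cylinder may meet $X_{\sigma_{i-1}}$ or the deleted orbits $\Orb_{T_\sigma}(x_j)$, whose points are not isolated in $X_{\sigma_i}$), and vague convergence only yields $\nu_i([v]\cap X_i)\le\liminf_k\mu_k([v])$; one must rule out mass drifting to $X_{\sigma_i}\setminus X_i$ in the limit. Moreover, your justification of the infinite values --- ``compacta on which the $\mu_k$-masses blow up'' --- contradicts your own bound $\mu_k(K)\le\mu_k(U_N)\le C_N$: no fixed compactum has blowing-up mass. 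What is actually needed, and is provable, is a sequence of compacta with diverging \emph{limiting} masses: taking $K_p=\{x\in[v]\cap X_{\sigma_i};\ x_l\in A_i\setminus A_{i-1}\text{ for some }|l|\le\max_{c\in A_i\setminus A_{i-1}}|\sigma^p(c)|\}\subset X_i$, every occurrence of $v$ inside a level-$p$ block $\sigma^p(c)$, $c\in A_i\setminus A_{i-1}$, of $\sigma^k(\omega_0)$ contributes to $\mu_k(K_p)$, whence $\lim_k\mu_k(K_p)\ge\theta_i^{-p}\sum_{c\in A_i\setminus A_{i-1}}\bigl(\lim_k\theta_i^{-k}N(c,\sigma^k(\omega_0))\bigr)N(v,\sigma^p(c))$, and this diverges as $p\to\infty$ by Corollary~\ref{freq}~\eqref{infinite} applied once more; a companion (tightness) estimate is needed to get equality, rather than inequality, for the finite values. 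To be fair, the paper hides exactly the same difficulty inside its one-line assertion that $\tilde\nu_i$ is countably additive --- on traces of cylinders in $X_i$ that assertion \emph{is} the no-escape-of-mass statement --- so your proposal reaches essentially the published level of rigor; but this identification, not the construction of the vague limit, is the substantive content of the characterization and should be proved, not asserted.
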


\begin{proof}
Let $v \in \L(\sigma_i)$. Put $m=|v|$. 

Assuming $\lambda_{i-1} < \theta_i$, it follows from 
Corollary~\ref{freq}~\eqref{prob} that 
\[
\mu_i([v])=\lim_{k \to \infty} \frac{1}{|\sigma^k(\omega_0)|}
N(v, \sigma^k(\omega_0))=\frac{\beta_v}{\sum\limits_{w \in \L_m(\sigma_i)}\beta_w}.
\]

Assume $\lambda_{i-1} \ge \theta_i$. Define an extended, 
real\nobreakdash-valued, set 
function $\tilde{\nu_i}$ on the ring ${\mathscr C}=\{[u.v];uv \in \L(\sigma_i)\}$
of cylinder sets by 
\[
\tilde{\nu_i}([u.v])=
\lim_{k \to \infty}\frac{1}{{\theta_i}^k}\sum_{j=-m_k}^{n_k-1}
\delta_{{T_\sigma}^j\omega}([u.v])=\lim_{k \to \infty}\frac{1}{{\theta_i}^k}
N(uv, \sigma^k(\omega_0))=\gamma_w \delta_{uv},
\]
where $w \in \L_{|uv|}(\sigma_i)$ with $w_1=\omega_0$. The set function 
$\tilde{\nu_i}$ is countably additive, and also, finite on any compact open 
subset of $X_i$. Hence, $\tilde{\nu_i}$ is uniquely extended to a 
$T_\sigma$\nobreakdash-invariant, Radon measure $\nu_i$ on $X_i$. 
It follows from Corollary~\ref{freq} that $\nu$ is infinite. 
\end{proof}

It might be worthwhile noticing that given an integer $j$ with $1 \le j \le i$, 
$\nu_i(X_{\sigma_j} \setminus X_{\sigma_{j-1}})=\infty$ iff $1 \le j < i^\prime$.

\begin{example}\label{measures_of_cylinders}
\begin{enumerate}[(i)]
\item
Let $\sigma$ be as in Example~\ref{ex_PF}~\eqref{ex_PF_1}. Then, $X_{\sigma_1}=\{
a^\infty\}$ has an invariant probability measure $\mu_1=\delta_{a^\infty}$, and 
$X_{\sigma_2} \setminus X_{\sigma_1}$ and $X_{\sigma_3} \setminus X_{\sigma_2}$ 
have infinite invariant measures $\nu_2$ and $\nu_3$, respectively. The measures 
of cylinder sets with respect to $\nu_2$ or $\nu_3$ can be calculated as follows:
\begin{align*}
\nu_2([a]) &=\infty, & \nu_2([b]) &= 1, & \nu_2([aa]) &= \infty, & 
\nu_2([ab]) &= 1/3, & \nu_2([ba]) &= 1/3, \\ 
\nu_2([bb]) &= 2/3, & \nu_3([a]) &= \infty, & \nu_3([b]) &= \infty, & 
\nu_3([c]) & = 1, & \nu_3([aa]) &= \infty, \\ 
\nu_3([aa]) &= \infty, & \nu_3([ab]) &= \infty, & \nu_3([ba]) &= \infty, 
& \nu_3([bb]) &= \infty, & \nu_3([bc]) &= 1, \\
\nu_3([ca]) &= 1/2, & \nu_3([cb]) &= 1/2.
\end{align*}
\item
Set $A=\{a,b,c,d,e\}$. Define $\sigma:A \to A^+$ by $a \mapsto ab, b \mapsto a, 
c \mapsto acd, d \mapsto adc, e \mapsto dece$. Then, $\theta_1=(1+\sqrt{5})/2$, 
$\theta_2=\theta_3=2$. It follows from Proposition~\ref{existence} that 
$X_{\sigma_1}$ and $X_{\sigma_2}$ have invariant probability measures $\mu_1$ 
and $\mu_2$, respectively, and that $X_{\sigma_3}$ has an infinite invariant 
measure $\nu_3$. The measures of cylinder sets with respect to $\mu_1$, 
$\mu_2$ and $\nu_3$ are calculated as follows:
\begin{align*}
\mu_1([a]) &= (\sqrt{5}-1)/2, & \mu_1([b]) &= (3-\sqrt{5})/2, & 
\mu_1([aa]) &= \sqrt{5}-2,\\ 
\mu_1([ab]) &=(3-\sqrt{5})/2, & \mu_1([ba])&=(3-\sqrt{5})/2, & 
\mu_2([a]) & = 1/2, \\ 
\mu_2([b]) &= 1/4, & \mu_2([c]) &= 1/8, & \mu_2([d]) &= 1/8 \\
\mu_2([aa]) &= 1/8, & \mu_2([ab]) &= 1/4, & \mu_2([ba]) &= 1/4, \\ 
\mu_2([ac]) &= 1/16, & \mu_2([ad]) &= 1/16, & \mu_2([ca]) &= 1/16, \\ 
\mu_2([cd]) &= 1/16, & \mu_2([da]) &= 1/16, & \mu_2([dc]) &= 1/16, \\
\nu_3([a]) &= \infty, & \nu_3([b]) &= \infty, & \nu_3([c]) &= \infty, \\
\nu_3([d]) &= \infty, & \nu_3([e]) &= 1, & \nu_3([aa]) &= \infty, \\
\nu_3([ab]) &= \infty, & \nu_3([ba]) &= \infty, & \nu_3([ac]) &= \infty, \\ 
\nu_3([ad]) &= \infty, & \nu_3([ca]) &= \infty, & \nu_3([cd]) &= \infty, \\ 
\nu_3([da]) &= \infty, & \nu_3([dc]) &= \infty, & \nu_3([dd]) &= 1/4, \\
\nu_3([ce]) &= 1/2, & \nu_3([de])&=1/2, & \nu_3([ea])&=1/2, \\ 
\nu_3([ec])&=1/2.
\end{align*}
\end{enumerate}
\end{example}

The following lemma plays a crucial role in showing that $\mu_i$ or $\nu_i$ 
is a {\it unique} invariant measure for $X_i$. 
\begin{lemma}\label{unique_Radon}
Let $X \subset A^\Z$ be a locally compact, minimal subshift. Let $T$ denote 
the left shift on $X$. Let $K \subset X$ be a 
nonempty, compact open set. Choose a point $\omega \in K$ which returns to 
$K$ infinitely many times, say at $0=k_0<k_1<k_2<\dots$. Then, the following 
are equivalent$:$
\begin{enumerate}[{\rm (i)}]
\item
$X$ has a unique $($up to scaling$)$, invariant Radon measure$;$
\item
for any $v \in \L(X)$ such that $[v] \subset K$, 
\[
\lim_{n \to \infty}\frac{1}{n}N(v, \omega_{[k_j,k_{j+n}]})
\]
converges to a constant uniformly in $j \ge 0$.
\end{enumerate}
Furthermore, if these conditions hold, then the unique invariant measure 
is ergodic.
\end{lemma}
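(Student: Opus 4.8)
The plan is to pass to the first-return map $T_K$ on $K$ and reduce the whole statement to the classical theory of unique ergodicity for minimal Cantor systems. First I would check that $K$, being compact open in a subshift, is a finite union of cylinders, and that the first-return time $r\colon K\to\N$, $r(x)=\min\{n\ge1:T^nx\in K\}$, is well defined, continuous, and bounded. Minimality of $X$ gives that the forward $T$-orbit of every point of $K$ is dense, hence meets $K$, so $r<\infty$; and $r$ is locally constant on the compact set $K$, so $R:=\max_K r<\infty$ and consecutive return times satisfy $k_{l+1}-k_l=r(T_K^l\omega)\le R$. Thus $T_K\colon K\to K$ is a homeomorphism of a compact, zero-dimensional space, and since every cylinder $[v]\subseteq K$ is hit by the forward $T$-orbit of each $x\in K$ at a return time, $(K,T_K)$ is itself minimal.

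Next I would set up the standard bijection, up to scaling, between nonzero $T$-invariant Radon measures $\mu$ on $X$ and $T_K$-invariant probability measures $\mu_K$ on $K$. Any such $\mu$ has full support by minimality, so $0<\mu(K)<\infty$ ($K$ being compact open), and $\mu_K:=\mu(K)^{-1}\mu|_K$ is $T_K$-invariant by Kac's argument, which is valid here because $r$ is bounded. Conversely a $T_K$-invariant probability $\mu_K$ extends to a $T$-invariant Radon measure on $X$ by summing over the Rokhlin tower $\{T^l(\{r=n\}):0\le l<n\}$, and boundedness of $r$ makes the resulting measure finite on compact sets. These maps are mutually inverse up to scaling, so \emph{$X$ carries a unique invariant Radon measure up to scaling if and only if $(K,T_K)$ is uniquely ergodic}.

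Then I would rewrite the quantity in the second condition as a Birkhoff sum for $T_K$. Because $[v]\subseteq K$, every occurrence of $v$ in $\omega$ sits at a return time: $\omega_{[p,p+|v|)}=v$ forces $T^p\omega\in K$, i.e.\ $p=k_l$ for some $l$, and then $T^{k_l}\omega=T_K^l\omega\in[v]\cap K$. Counting occurrences inside $\omega_{[k_j,k_{j+n}]}$ and discarding the boundedly many (controlled by $R$) occurrences that the right endpoint may truncate, I obtain
\[
N(v,\omega_{[k_j,k_{j+n}]})=\sum_{l=j}^{j+n-1}\mathbf{1}_{[v]\cap K}(T_K^l\omega)+O(1),
\]
with the $O(1)$ uniform in $j$ and $n$. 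Hence the second condition says exactly that the Birkhoff averages $n^{-1}\sum_{l=0}^{n-1}\mathbf{1}_{[v]\cap K}\circ T_K^{\,l}$, evaluated along the forward orbit $\{T_K^j\omega\}_{j\ge0}$, converge to a constant uniformly in $j$, for every cylinder $[v]\subseteq K$.

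Finally I would invoke the classical equivalence (Oxtoby's theorem, in the uniform-word-frequency form for minimal subshifts; see \cite{Qu}): the minimal zero-dimensional system $(K,T_K)$ is uniquely ergodic if and only if for each clopen set the Birkhoff averages converge uniformly to a constant, and it suffices to test on indicators of cylinders $[v]\subseteq K$, which generate the clopen algebra of $K$. The ergodicity statement then follows because the unique invariant measure of a uniquely ergodic system is ergodic, and ergodicity transfers across the correspondence above. I expect the main obstacle to be the passage from ``uniform along the single forward orbit $\{T_K^j\omega\}$'' (which is what the hypothesis provides) to genuine uniformity over all of $K$ (which Oxtoby's criterion needs): the functions $g_n=n^{-1}\sum_{l<n}\mathbf{1}_{[v]\cap K}\circ T_K^l$ are not equicontinuous, so density of the orbit alone does not suffice. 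The way through is to note that each $g_n$ is determined by a bounded-length prefix of the return-word coding (again thanks to $k_{l+1}-k_l\le R$), and that minimality forces every such finite pattern to occur as some window $\omega_{[k_j,k_{j+n}]}$; thus $\sup_{y\in K}|g_n(y)-c|$ is already attained, up to the $O(1)/n$ error, along the orbit of $\omega$, and uniform-in-$j$ convergence upgrades to uniform-on-$K$ convergence.
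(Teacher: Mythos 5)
Your reduction is sound, and essentially coincides with the paper's own argument, \emph{only} in the case where $X$ is compact: there the paper likewise passes to the first-return map $T_K$ and invokes \cite[Theorem~IV.13]{Qu}. Note that the quoted criterion is already a single-orbit criterion (uniform convergence in $j$ along the orbit of one arbitrarily chosen point), so the difficulty you flag at the end of your proposal --- upgrading uniformity along $\{T_K^j\omega\}$ to uniformity over all of $K$ --- is not something you need to resolve; it is internal to the cited theorem.

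The genuine gap is that the lemma is stated for a locally compact, possibly \emph{non-compact} $X$, and this is precisely the case the paper needs: it is applied to the non-compact sets $X_i$. There your very first step fails. For a non-compact minimal subshift, minimality means every two-sided orbit is dense; this does not make forward orbits dense, and a point of $K$ need not return to $K$ at all. Concretely, take the Cantor substitution $\tau: a \mapsto aaa$, $b \mapsto bab$ (almost primitive, of two primitive components), let $X=X_\tau \setminus \{a^\infty\}$, which is a locally compact, non-compact, minimal subshift, and let $K=[b]$, which is compact open in $X$. Since $ba^k \in \L(\tau)$ for every $k$, there is a point $x \in K$ with $x_n=a$ for all $n \ge 1$; its forward orbit never meets $K$ (it escapes every compact subset of $X$). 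So the return time $r$ is not even finite on $K$, let alone bounded, $T_K$ is not a homeomorphism of $K$, and the Kac-tower bijection you build collapses. Nor can the non-returning part of $K$ be discarded as negligible: Poincar\'e recurrence is a theorem about finite invariant measures, whereas here the invariant Radon measures may be infinite --- indeed in this very example the invariant Radon measure is infinite (Proposition~\ref{existence}, since $\theta_2=2 \le \lambda_1=3$), which also contradicts your claimed correspondence outright, because a tower of height at most $R$ over $K$ carries total mass at most $R\mu(K)<\infty$. This is exactly why the statement takes care to ``choose a point $\omega \in K$ which returns to $K$ infinitely many times'' (a hypothesis that is automatic only when $X$ is compact), and why the paper's proof splits into two cases, disposing of non-compact $X$ by quoting \cite[Theorem~4.5]{Y} (see also the proof of \cite[Corollary~4.6]{Y}), a result tailored to locally compact Cantor minimal systems. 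To complete your proof you would have to supply an argument for the non-compact case --- essentially reproving that result, or otherwise handling return times that are finite only on a proper subset of $K$ --- rather than the bounded-return-time reduction.
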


\begin{proof}
If $X$ is non\nobreakdash-compact, then this is a consequence of 
\cite[Theorem~4.5]{Y}; see also the proof of \cite[Corollary~4.6]{Y}. 

If $X$ is compact, then it is sufficient to consider when the first return map 
$T_K$ induced on $K$ by $T$ is uniquely ergodic, because there exists a 
one\nobreakdash-to\nobreakdash-one correspondence between the set of 
$T$\nobreakdash-invariant probability measures and the set of 
$T_K$\nobreakdash-invariant probability measures. It follows from 
\cite[Theorem~IV.13]{Qu} that given a 
minimal homeomorphism $S$ on a totally disconnected, compact metric 
space $Y$, $S$ is uniquely ergodic if and only if for an arbitrarily chosen 
point $y \in Y$, it holds that for any nonempty, clopen set $F \subset Y$, 
there exists a constant $c$ such that
\[
\lim_{n \to \infty}\frac{1}{n}\sum_{i=0}^{n-1}\chi_F(S^{i+j}y) \to c 
\textrm{ as }n \to \infty, \textrm{ uniformly in }j \ge 0,
\]
where $\chi_F$ is the characteristic function of $F$. We obtain the conclusion 
by applying this criterion with $S=T_K$, $y=\omega$ and $F=[v]$. 
\end{proof}

\begin{theorem}\label{uniqueness}
Let 
\[
X_\sigma=\bigcup_{i=1}^{n_\sigma}X_i \cup
\bigcup_{i=2}^{n_\sigma}\Orb_{T_\sigma}(y_i) \cup 
\bigcup_{i=2}^{n_\sigma}\bigcup_{j=1}^{N_i}\Orb_{T_\sigma}(x_{ij})
\]
be the decomposition \eqref{decomposition}. 
The counting measure on each $\Orb_{T_\sigma}(x_{ij})$ is ergodic. The measure is 
finite if and only if the point $x_{ij}$ is a fixed point of $T_\sigma$. The 
counting measure on each $\Orb_{T_\sigma}(y_i)$ is an ergodic, infinite measure. 

If $X_i \ne \emptyset$, then an invariant Radon measure on $X_i$ provided with 
the relative topology is unique up to scaling, and ergodic. This measure is 
finite if $\theta_i > \lambda_{i-1}$, and infinite if $\theta_i \le 
\lambda_{i-1}$.
\end{theorem}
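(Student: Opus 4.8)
The statements concerning the orbits are immediate. Each set $\Orb_{T_\sigma}(x_{ij})$ and $\Orb_{T_\sigma}(y_i)$ is a single $T_\sigma$\nobreakdash-orbit, so the only $T_\sigma$\nobreakdash-invariant subsets are $\emptyset$ and the whole orbit; hence the counting measure is ergodic in each case. This measure is finite precisely when the orbit is finite, i.e.\ when the point is periodic under $T_\sigma$; by Theorem~\ref{all_closed_inv_sets}~(iii) a point $x_{ij}$ that is periodic under $T_\sigma$ must equal $s^\infty$, which is a fixed point, giving the asserted ``finite iff fixed point'' dichotomy. Since each $y_i$ is a quasi\nobreakdash-periodic point of a primitive type, it is aperiodic under $T_\sigma$ by Lemma~\ref{pos_rec}~(i), so $\Orb_{T_\sigma}(y_i)$ is infinite and carries an infinite counting measure.

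Now suppose $X_i\neq\emptyset$. Existence of an invariant Radon measure on $X_i$, together with the finiteness dichotomy, is furnished by Proposition~\ref{existence}: a finite $\mu_i$ when $\theta_i>\lambda_{i-1}$ and an infinite $\nu_i$ when $\theta_i\le\lambda_{i-1}$. Thus everything will follow once uniqueness up to scaling and ergodicity are established, and for these the plan is to invoke Lemma~\ref{unique_Radon}. By Proposition~\ref{non_meager}~(iv) the orbit of every point of $X_i$ is dense in $X_{\sigma_i}$, hence dense in $X_i$ for the relative topology, so $X_i$ is a locally compact minimal subshift in the sense required by that lemma. Writing $b=\omega_0\in A_i\setminus A_{i-1}$ for the quasi\nobreakdash-fixed point $\omega\in X_i$, I would take
\[
K=\left(\bigcup_{c\in A_i\setminus A_{i-1}}[c]\right)\cap X_i .
\]
Every point having a letter of $A_i\setminus A_{i-1}$ at the origin lies off $X_{\sigma_{i-1}}\cup\bigcup_j\Orb_{T_\sigma}(x_j)\subset{A_{i-1}}^\Z$, so $K$ is a compact open subset of $X_i$ and $\omega\in K$. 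As $\omega$ is positively recurrent (Lemma~\ref{pos_rec}~\eqref{positively_rec}) while $\omega_{(-\infty,-1]}\in{A_{i-1}}^{-\N}$, the return times of $\omega$ to $K$ are exactly the positions $0=k_0<k_1<\cdots$ of the letters of $A_i\setminus A_{i-1}$ in $\omega_{[0,\infty)}$.

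It then remains to verify condition~(ii) of Lemma~\ref{unique_Radon}: for each $v\in\L(X_i)$ with $v_1\in A_i\setminus A_{i-1}$, the averages $\tfrac1n N(v,\omega_{[k_j,k_{j+n}]})$ converge, uniformly in $j$, to a constant. Put $m=|v|$ and $W=\omega_{[k_j,k_{j+n}]}$. The number of letters of $A_i\setminus A_{i-1}$ in $W$ is $n+O(1)$, so it suffices to control $N(v,W)$ divided by that number. The device I would use is $K$\nobreakdash-desubstitution: from $\omega=T_\sigma^{l_K}\sigma^K(\omega)$ the word $W$ is a concatenation of blocks $\sigma^K(\omega_p)$ apart from two partial end\nobreakdash-blocks. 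A block $\sigma^K(c)$ with $c\in A_{i-1}$ contains neither a letter of $A_i\setminus A_{i-1}$ nor an occurrence of $v$ (as $v_1$ is such a letter), whereas each block with $c\in A_i\setminus A_{i-1}$ contributes $N(v,\sigma^K(c))$ occurrences of $v$ and $\sum_{d\in A_i\setminus A_{i-1}}N(d,\sigma^K(c))$ letters of $A_i\setminus A_{i-1}$. By Corollary~\ref{freq}~(iii) the ratio of these two quantities tends, as $K\to\infty$, to
\[
L:=\frac{\delta_v}{\sum_{w\in\L_m(\sigma_i)\setminus\B_m(i-1)}\delta_w},
\]
a constant independent of the block letter $c$; the corollary applies because $v_1\in A_i\setminus A_{i-1}$ forces $v\notin\L(\sigma_{i^\prime-1})$.

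The hard part is to upgrade this block\nobreakdash-wise convergence to the uniform\nobreakdash-in\nobreakdash-$j$ convergence demanded by Lemma~\ref{unique_Radon}, and the crucial feature is that the limit $L$ is independent of $c$. If $K$ is fixed so large that every top block has its ratio within $\varepsilon$ of $L$, then the ratio of the summed $v$\nobreakdash-counts to the summed top\nobreakdash-letter counts over any collection of top blocks again lies within $\varepsilon$ of $L$, irrespective of how the blocks are distributed in $W$. Since $\theta_i>1$ (Lemma~\ref{start_for_PF}~\eqref{when_1}) forces $\min_{c\in A_i\setminus A_{i-1}}\sum_{d\in A_i\setminus A_{i-1}}N(d,\sigma^K(c))\to\infty$ as $K\to\infty$, the occurrences of $v$ straddling block boundaries and the two partial end\nobreakdash-blocks contribute $o(n)$ to both counts, uniformly in $j$. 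Hence $\tfrac1n N(v,\omega_{[k_j,k_{j+n}]})\to L$ uniformly in $j$, and Lemma~\ref{unique_Radon} yields that the invariant Radon measure on $X_i$ is unique up to scaling and ergodic. Comparing it with the measure constructed in Proposition~\ref{existence} shows that it is finite when $\theta_i>\lambda_{i-1}$ and infinite when $\theta_i\le\lambda_{i-1}$, which completes the proof.
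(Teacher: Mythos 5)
Your proposal is correct and follows essentially the same route as the paper: it reduces to $\theta_i>1$ via Lemma~\ref{start_for_PF}~\eqref{when_1} and Proposition~\ref{existence}, applies Lemma~\ref{unique_Radon} with $K$ the set of points carrying a letter of $A_i\setminus A_{i-1}$ at the origin and the return times $k_j$ of $\omega$, and verifies the uniform convergence of $\tfrac1n N(v,\omega_{[k_j,k_{j+n}]})$ by desubstituting into $\sigma^p$-blocks, invoking Corollary~\ref{freq}~(iii) for the per-block ratios, and controlling boundary and end-block errors by taking $p$ large. This is precisely the paper's argument, with your $\varepsilon$-bookkeeping merely sketched where the paper carries it out explicitly.
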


\begin{proof}
In view of Lemma~\ref{start_for_PF}~\eqref{when_1} and 
Proposition~\ref{existence}, it is enough for us to show the uniqueness of 
an invariant measure under the assumption $\theta_i>1$. Put 
$\{0=k_0<k_1<k_2<\dots\}=\{k \ge 0; \omega_k \in A_i \setminus A_{i-1}\}$. 
Suppose that a word $v \in \L(\sigma_i)$ contains a letter in 
$A_i \setminus A_{i-1}$ as a factor. Put
\[
\Delta_v=\frac{\delta_v}{\sum\limits_{w \in \L_m(\sigma_i) \setminus \B_m(i-1)}
\delta_w}.
\]
It is sufficient to prove that 
$N(v, \omega_{[k_j,k_{j+n}]})/n \to \Delta_v$ as $n \to \infty$, 
uniformly in $j \ge 0$. Let $\epsilon > 0$. Choose $p \in \N$ so that 
\begin{align*}
3\left\{ \min_{a,b \in A_i \setminus A_{i-1}}N(a, \sigma^p(b))\right\}^{-1} & < 
\frac{1}{4}|v|^{-1}\epsilon; \\
\left| N(v,\sigma^p(b)) - \Delta_v \sum_{a \in A_i \setminus A_{i-1}} 
N(a, \sigma^p(b)) \right| & < \frac{1}{4}\epsilon \sum_{a \in A_i \setminus 
A_{i-1}}N(a, \sigma^p(b)) \textrm{ for any } b \in A_i \setminus A_{i-1}.
\end{align*}
Choose an integer $n_0 \ge 2\max_{a \in A}|\sigma^p(a)|$ so that for all 
integers $n \ge n_0$,
\[
2\Delta_v n^{-1} \max_{a \in A}|\sigma^p(a)| + 
2 n^{-1} \max_{a \in A}|\sigma^p(a)| < \frac{1}{4}\epsilon.
\]
Since $\omega$ and $\sigma^p(\omega)$ coincide up to a shift by some digits, 
for every integer $j \ge 0$ 
there exist $q \in \N$, $r \in \N$ and $s,t \in A^\ast$ such that
\begin{itemize}
\item
$s$ is a suffix of $\sigma^p(\omega_{q-1})$;
\item
$t$ is a prefix of $\sigma^p(\omega_{q+r+1})$;
\item
$\omega_{[k_j,k_{j+n}]}=s\sigma^p(\omega_q)\sigma^p(\omega_{q+1}) \dots \sigma^p(
\omega_{q+r})t$.
\end{itemize}
Let $n \ge n_0$ and $j \ge 0$ be arbitrary integers. Since
\begin{multline*}
N(v,\omega_{[k_j,k_{j+n}]}) \le |s| + |t| + \\ 
\sum_{q \le l \le q+r \atop 
\omega_l \in A_i \setminus A_{i-1}}N(v,\sigma^p(\omega_l))
+ |v| \sharp \{
q-1 \le l \le q+r+1;\omega_l \in A_i \setminus A_{i-1}\}, 
\end{multline*}
we obtain
\begin{multline*}
\left| N(v,\omega_{[k_j,k_{j+n}]}) - \sum_{q \le l \le q+r \atop \omega_l \in 
A_i \setminus A_{i-1}} N(v, \sigma^p(\omega_l))\right| \le 2 \max_{a \in A}|
\sigma^p(a)| \\ 
+ |v| \sharp \{q-1 \le l \le q+r+1; \omega_l \in A_i \setminus A_{i-1}\}.
\end{multline*}
However, since 
\begin{align*}
& \frac{\sharp \{q-1 \le l \le q+r+1;\omega_l \in A_i \setminus A_{i-1}\}}
{\sum\limits_{q \le l \le q+r \atop \omega_l \in A_i \setminus A_{i-1}} 
\sum\limits_{a \in A_i \setminus A_{i-1}} N(a, \sigma^p(\omega_l))} \\
& \quad 
\le \frac{\sharp \{q-1 \le l \le q+r+1;\omega_l \in A_i \setminus A_{i-1}\}}
{\sharp \{q \le l \le q+r;\omega_l \in A_i \setminus A_{i-1}\} 
\min\limits_{a,b \in A_i \setminus A_{i-1}}N(a, \sigma^p(b))} \\
& \quad 
\le 3 \left\{ \min_{a,b \in A_i \setminus A_{i-1}}N(a, \sigma^p(b))\right\}^{-1} 
\le \frac{1}{4}|v|^{-1}\epsilon,
\end{align*}
we obtain
\begin{align*}
& \left| \frac{1}{n}N(v, \omega_{[k_j,k_{j+n}]}) - \frac{1}{n} 
\sum_{q \le l \le q+r \atop 
\omega_l \in A_i \setminus A_{i-1}}N(v, \sigma^p(\omega_l))\right| \\
& \quad < \frac{1}{4}\epsilon + |v| \cdot \frac{1}{4}|v|^{-1}\epsilon \cdot 
\frac{1}{n} \sum_{q \le l \le q+r \atop \omega_l \in A_i \setminus A_{i-1}} 
\sum_{a \in A_i \setminus A_{i-1}}N(a, \sigma^p(\omega_l)) \le 
\frac{1}{2}\epsilon.
\end{align*}
Also, we have
\begin{align*}
\left| \frac{1}{n}\sum_{q \le l \le q+r \atop \omega_l \in A_i \setminus A_{i-1}} 
N(v, \sigma^p(\omega_l)) - \Delta_v \right| & \le 
\frac{1}{n}\sum_{q \le l \le q+r \atop \omega_l \in A_i \setminus A_{i-1}} \left|
N(v, \sigma^p(\omega_l)) - \Delta_v \sum_{a \in A_i \setminus A_{i-1}} 
N(a, \sigma^p(\omega_l)) \right| \\
& \qquad + 2 \Delta_v n^{-1} \max_{a \in A}|\sigma^p(a)| \\
& < \frac{1}{4}\epsilon \cdot \frac{1}{n}\sum_{q \le l \le q+r \atop \omega_l 
\in A_i \setminus A_{i-1}} \sum_{a \in A_i \setminus A_{i-1}}N(a, 
\sigma^p(\omega_l)) + \frac{1}{4}\epsilon \le \frac{1}{2}\epsilon.
\end{align*}
Finally, 
\[
\left| \frac{1}{n}N(v, \omega_{[k_j,k_{j+n}]}) - \Delta_v \right| < \epsilon.
\]
This completes the proof.
\end{proof}

We are now in a position to see the unique ergodicity left to be 
proved in Proposition~\ref{Chacon_type}. Assume the hypothesis of 
the proposition, so that $\theta_1=1$ and $\theta_2>1$. Let 
$\omega$ be the fixed point of $\sigma$ as in the proposition. For this $\omega$, 
the proof of Theorem~\ref{uniqueness} may reach the same conclusion, that is, 
the first return map of $X_{\sigma_2}$ induced on $X_{\sigma_2} \setminus [a]$ 
is uniquely ergodic. This means the unique ergodicity of $X_{\sigma_2}$. 
\begin{corollary}\label{unique_ergodicity}
The subshift $X_\sigma$ is uniquely ergodic if and only if one of the 
following holds$:$
\begin{enumerate}[{\rm (i)}]
\item\label{X_sigma_1}
$\lambda=\theta_1>1;$
\item\label{X_sigma_2}
$\theta_1=1$, $\lambda=\theta_2>1$, and $s^\infty \notin X_\sigma$, where 
$A_1=\{s\}$.
\item\label{s^infty}
$\lambda=1;$
\end{enumerate}
\end{corollary}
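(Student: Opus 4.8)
The plan is to reduce unique ergodicity to a count of the ergodic invariant \emph{probability} measures, read off from the decomposition \eqref{decomposition} via Theorem~\ref{uniqueness}. Since $X_\sigma$ is a compact subshift it carries at least one invariant probability measure, so it suffices to show that each of \eqref{X_sigma_1}--\eqref{s^infty} forces \emph{at most} one ergodic invariant probability measure, and that the failure of all three produces at least two. The pieces $X_i$, $\Orb_{T_\sigma}(y_i)$ and $\Orb_{T_\sigma}(x_{ij})$ of \eqref{decomposition} are pairwise disjoint, Borel and $T_\sigma$-invariant, so every ergodic probability measure is concentrated on exactly one of them; I would therefore decide piece by piece which ones support a (necessarily unique, ergodic) invariant probability measure. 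A single aperiodic orbit carries only multiples of an infinite counting measure, so by Theorem~\ref{uniqueness} no $\Orb_{T_\sigma}(y_i)$ and no aperiodic $\Orb_{T_\sigma}(x_{ij})$ contributes one, while a $T_\sigma$-periodic $x_{ij}$ contributes $\delta_{s^\infty}$, and by Theorem~\ref{all_closed_inv_sets}(iii) the only such fixed point is $s^\infty$ with $A_1=\{s\}$.

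For the pieces $X_i$ I would use Lemma~\ref{start_for_PF}\eqref{when_1} to record $X_i\neq\emptyset\Leftrightarrow\theta_i>1$, and Theorem~\ref{uniqueness} to note that a nonempty $X_i$ supports a finite (hence normalizable) invariant measure exactly when $i=1$, or when $i\ge2$ and $\theta_i>\lambda_{i-1}$; writing $\lambda_0:=1$ and using $\theta_1\ge1$ (the Perron root of the primitive block $Q_1$, whose row sums are $\ge1$), both cases merge into the single \emph{record} condition $\theta_i>\lambda_{i-1}$. Thus the number of ergodic probability measures is $P+F$, where $P=\sharp\{i:\theta_i>\lambda_{i-1}\}$ counts the strict increases of the running maximum $\lambda_i$ above $1$, and $F\in\{0,1\}$ equals $1$ precisely when $\theta_1=1$ and $s^\infty\in X_\sigma$; indeed, when $\theta_1>1$ the point $s^\infty$, if present, already lies in $X_1$ and is counted by $P$, so $F$ can be nonzero only for $\theta_1=1$.

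The one genuine computation I would isolate is that $\theta_1=\theta_2=1$ forces $s^\infty\in X_\sigma$, hence $F=1$: here $A_1=\{s\}$, $\sigma(s)=s$, $A_2\setminus A_1=\{c\}$ and $\sigma(c)=s^{a}cs^{b}$ with $a+b\ge1$ (otherwise $s$ would not occur in any $\sigma^k(c)$, contradicting Definition~\ref{n_components}\eqref{closed}), whence $\sigma^k(c)=s^{ka}cs^{kb}$ and Lemma~\ref{arbitrarily_long} gives $s^p\in\L(\sigma)$ for all $p$. With this fact the corollary is a finite case analysis. If $\lambda=1$ then every $\theta_i=1$, so $P=0$ and no $X_i$ or aperiodic orbit supports a probability measure; existence then forces the unique ergodic measure to be $\delta_{s^\infty}$ (so $s^\infty\in X_\sigma$ automatically): this is \eqref{s^infty}. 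If $\lambda=\theta_1>1$ then $i=1$ is the only record and $F=0$, giving $P+F=1$: this is \eqref{X_sigma_1}. If $\theta_1=1<\lambda=\theta_2$ then $i=2$ is the only record, so $P=1$, while $F=0\Leftrightarrow s^\infty\notin X_\sigma$, giving $P+F=1$: this is \eqref{X_sigma_2}.

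Conversely, when none of \eqref{X_sigma_1}--\eqref{s^infty} holds I would show $P+F\ge2$. Necessarily $\lambda>1$. If $\theta_1>1$ then $\lambda>\theta_1$, so there is a record at $i=1$ and a further record reaching $\lambda$, whence $P\ge2$. If $\theta_1=1$, set $i^\ast=\min\{i:\theta_i>1\}\ge2$: when $i^\ast\ge3$ we have $\theta_2=1$, so the auxiliary fact gives $F=1$ and $P\ge1$; when $i^\ast=2$ and $\theta_2<\lambda$ there are at least two records, so $P\ge2$; and when $i^\ast=2$ with $\theta_2=\lambda$, the failure of \eqref{X_sigma_2} means $s^\infty\in X_\sigma$, so $F=1$ and $P+F=1+1=2$. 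The main obstacle is not any single estimate but the bookkeeping that matches the record count $P$ and the fixed-point term $F$ to the three alternatives; the only input beyond Theorem~\ref{uniqueness} and Lemma~\ref{start_for_PF}\eqref{when_1} is the implication $\theta_1=\theta_2=1\Rightarrow s^\infty\in X_\sigma$ above.
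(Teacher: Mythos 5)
Your proof is correct, and it is built from the same three ingredients the paper uses (the decomposition \eqref{decomposition} from Theorem~\ref{all_closed_inv_sets}, the finite/infinite dichotomy of Theorem~\ref{uniqueness}, and Lemma~\ref{start_for_PF}~\eqref{when_1}), but it organizes them differently. The paper's forward direction routes through the trichotomy for a unique minimal set stated at the end of Theorem~\ref{all_closed_inv_sets} (conditions \eqref{anyone_long}, \eqref{nonexistence}, \eqref{AP}), matching each branch to one of (i)--(iii) and invoking Theorem~\ref{uniqueness} inside each branch; the converse is dismissed as ``straightforward.'' You instead bypass the minimal-set classification entirely and prove the exact formula: the number of ergodic invariant probability measures equals $P+F$, where $P=\sharp\{i:\theta_i>\lambda_{i-1}\}$ (records, with $\lambda_0:=1$) and $F\in\{0,1\}$ detects $\delta_{s^\infty}$. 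This treats both implications symmetrically, gives a quantitative refinement (the exact count of ergodic probability measures in the non-uniquely-ergodic case, e.g.\ $P\ge 2$ or $P+F=2$), and isolates the one genuinely combinatorial input, namely $\theta_1=\theta_2=1\Rightarrow s^\infty\in X_\sigma$ --- a fact the paper obtains implicitly through almost primitivity of $\sigma_2$ in case \eqref{AP}. What the paper's route buys is brevity: the trichotomy was already established, so the forward direction is three short appeals to Theorem~\ref{uniqueness}; what your route buys is a complete, self-contained converse and the sharper counting statement. Your bookkeeping is sound throughout: the identification $X_i\ne\emptyset\Leftrightarrow\theta_i>1$, the fact that finite Borel measures on the locally compact $X_i$ are automatically Radon (so Theorem~\ref{uniqueness} really does exclude probability measures when $\theta_i\le\lambda_{i-1}$), the exclusion of $s^\infty$ from any $X_i$ when $\theta_1=1$ (dense-orbit property of $X_i$), and the case analysis are all correct.

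Two small slips, neither affecting the argument. First, in the auxiliary claim you cite Definition~\ref{n_components}\eqref{closed} to rule out $\sigma(c)=c$; the relevant hypothesis is item (ii) of that definition (occurrence of every letter of $A_i$ in $\sigma^k(a)$), not the closedness condition (i). Second, your record condition tacitly uses $\lambda_{i-1}\ge 1$ for all $i$, which you justify only via $\theta_1\ge1$; that is in fact sufficient, since $\lambda_{i-1}=\max_{j\le i-1}\theta_j\ge\theta_1$, so no gap results.
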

\begin{proof}
Assume that $X_\sigma$ is uniquely ergodic. 
In view of Theorem~\ref{all_closed_inv_sets}, we first consider the case where 
$\lim_{n \to \infty}|\sigma^n(a)|=\infty$ for any $a \in A_1$. Since 
$X_{\sigma_1}$ is the unique minimal set and $\theta_1>1$, 
Theorem~\ref{uniqueness} implies that $\theta_i < \theta_1$ for any integer $i$ 
with $1 < i \le n_\sigma$. This corresponds to \eqref{X_sigma_1}. 
We then consider the case where $A_1$ is a singleton, say $\{s\}$, and 
$s^\infty \notin X_\sigma$. Then, $\sigma_2$ must satisfy the hypothesis of 
Proposition~\ref{Chacon_type}, and hence $\theta_2>1$. 
Theorem~\ref{uniqueness} implies $\theta_2>\theta_i$ for 
any integer $i$ with $2 < i \le n_\sigma$. This corresponds to 
\eqref{X_sigma_2}. We then consider 
the case where $A_1$ is a singleton and $\sigma_2$ is almost primitive. In this 
case, $\{s^\infty\}$ is the unique minimal set, where $A_1=\{s\}$. It follows 
from Theorem~\ref{uniqueness} again that 
$\theta_2=\theta_3= \dots = \theta_{n_\sigma}=1$. This corresponds to 
\eqref{s^infty}. The converse implication is straightforward in view of 
Lemma~\ref{start_for_PF}~\eqref{when_1}, Theorems~\ref{all_closed_inv_sets} 
and \ref{uniqueness}. 
\end{proof}

Among the examples studied above, uniquely ergodic systems are exactly 
\eqref{unique_1}, \eqref{unique_2} of Example~\ref{ex1}, 
\eqref{unique_3}, \eqref{unique_4} of Example~\ref{ex2}, and \eqref{ex_PF_1} 
of Example~\ref{ex_PF}. 

\begin{remark}
In \cite{BKMS}, $\theta_i$ is said to be distinguished if 
$\theta_i > \lambda_{i-1}$. It follows from 
Lemma~\ref{start_for_PF}~\eqref{when_1} and Theorem~\ref{uniqueness} that the 
case $\theta_i=1$ corresponds to the counting measure on $\Orb_{T_\sigma}(y_i)$ or 
$\Orb_{T_\sigma}(x_{ij})$. This kind of result is not obtained by \cite{BKMS}. 
Compare Theorem~\ref{uniqueness} and Corollary~\ref{unique_ergodicity} with 
\cite[Corollary~5.5]{BKMS}. 
\end{remark}


\end{document}